\title{First Integrals and symmetries of nonholonomic systems}
\author{P. Balseiro  \footnotemark}
\author{{\sc{Paula Balseiro}\thanks{
         Universidade Federal Fluminense, Instituto de Matem\'atica, Rua Mario Santos Braga S/N, 24020-140,
        Niteroi, Rio de Janeiro, Brazil. \newline{\texttt{E-mail: pbalseiro@id.uff.br}}}
        }  \ \
        {\sc{Nicola Sansonetto}\thanks{
         Universit\`a degli Studi di Verona, Dipartimento di Informatica, strada le Grazie 15, 37134 Verona, Italy.
\newline{\texttt{E-mail: nicola.sansonetto@univr.it}}}} 
}
\theoremstyle{plain}
\newtheorem{theorem}{Theorem}[section]
\newtheorem{lemma}[theorem]{Lemma}
\newtheorem{proposition}[theorem]{Proposition}
\newtheorem{corollary}[theorem]{Corollary}
\newtheorem*{theorem*}{Theorem}
\newtheorem{remarkth}[theorem]{Remark}
\newtheorem{remarkths}[theorem]{Remarks}
\theoremstyle{definition}
\newtheorem{definition}[theorem]{Definition}
\newtheorem{example}[theorem]{Example}
\newtheorem*{ConditionsA}{Conditions ${\mathcal A}$}
\newtheorem*{ConditionA4}{Condition $({\mathcal A}4)$}
\newenvironment{remark}{\begin{remarkth}\upshape}{\hfill$\diamond$\end{remarkth}}
\newenvironment{remarks}{\begin{remarkths}\upshape}{\hfill$\diamond$\end{remarkths}}
\newcommand{\g}{\mathfrak{g}}
\newcommand{\note}[1]{
\begin{minipage}[c]{0.86\textwidth} \tiny {\bf Note:} #1
\end{minipage}}
\def\W{\mathcal{W}}
\def\M{\mathcal{M}}
\def\V{\mathcal{V}}
\def\S{\mathcal{S}}
\def\C{\mathcal{C}}
\def\R{\mathbb{R}}
\def\red{{\mbox{\tiny{red}}}}
\def\nh{{\mbox{\tiny{nh}}}}
\def\B{{\mbox{\tiny{$B$}}}}
\def\subW{{\mbox{\tiny{$\W$}}}}
\def\subS{{\mbox{\tiny{$\S$}}}}
\def\subM{{\mbox{\tiny{$\M$}}}}
\def\subQ{{\mbox{\tiny{$Q$}}}}
\newcommand{\cA}{\mathcal{A}}
\newcommand{\cJ}{\mathcal{J}}
\newcommand{\cM}{\mathcal{M}}
\newcommand{\cO}{\mathcal{O}}
\newcommand{\cP}{\mathcal{P}}
\newcommand{\cU}{\mathcal{U}}
\newcommand{\bR}[1]{\mathbb{R}^{#1}}
\newcommand{\bI}[1]{\mathbb{I}^{#1}}
\newcommand{\bJ}{\mathbb{J}}
\newcommand{\bT}[1]{\mathbb{T}^{#1}}
\def\vecOm{\boldsymbol{\Omega}}
\def\vecL{\boldsymbol{\lambda}}
\def\vecgamma{\boldsymbol{\gamma}}
\def\vecalpha{\boldsymbol{\alpha}}
\def\vecbeta{\boldsymbol{\beta}}
\begin{document}
\maketitle

\begin{abstract}

 In nonholonomic mechanics, the presence of constraints in 
   the velocities breaks the well-under\-stood link between symmetries and first 
   integrals of holonomic systems, expressed in Noether's Theorem.
   However there is a known special class of first integrals of nonholonomic systems generated by vector
   fields tangent to the group orbits, called {\it horizontal gauge momenta}, that suggest that some version of this link should still hold.
   In this paper we give sufficient conditions for the existence of horizontal gauge momenta; our analysis leads to a constructive 
   method and a precise estimate of their number, with fundamental consequences to the  integrability of some nonholonomic systems as well as their hamiltonization.  
   We apply our results to three paradigmatic examples:
   the snakeboard, a solid of revolution rolling without sliding on a plane and 
   a heavy homogeneous ball that rolls without sliding inside a convex surface of revolution.    
   In particular, for the snakeboard we show  the existence of a new 
   horizontal gauge momentum that reveals new aspects of its integrability. 
   
\end{abstract}

\tableofcontents

\section{Introduction}



\subsection{Symmetries and first integrals}

The existence of first integrals plays a fundamental role in the study of dynamical systems and 
it influences many aspects of their behavior, in particular their integrability.
It is well-known that in holonomic systems with symmetries 
(described by a suitable action of a Lie group), Noether Theorem ensures that the components of the momentum map are first integrals of the dynamics. 
When we impose constraints in the velocities, we obtain the so-called {\it nonholonomic systems} \cite{Routh,NF,Bloch,CDS}: mechanical systems on a manifold $Q$ where the permitted velocities define a nonintegrable constant-rank distribution $D\subset TQ$ on $Q$.  
One way to see the non lagrangian/hamiltonian character 
of these systems is that the presence of symmetries does not necessarily
lead to first integrals  (see \cite{NF,CKSB,marle95,BKMM,CdLdDM,Sniatycki98,marle2001,
Bloch,cortes,zenkov2003,FRS2007,CDS,FS2010}); in particular, the components of the momentum map 
need not be conserved by the dynamics. 
On the other hand, it has been observed that there are many first integrals linear in 
the momenta that are generated by vector fields that are not infinitesimal generators of the symmetry
action, but are still tangent to the group orbits
\cite{BGM96,zenkov2003,FGS2008,FGS2012,BS2016}. 

The research of a possible link between the presence of symmetries  and the existence 
of first integrals in nonholonomic systems --if any exists-- has been an active field of research in the last thirty years \cite{BS93,CKSB,marle95,BGM96,BKMM,CdLdDM,Sniatycki98,marle2001,zenkov2003,FRS2007,FS2010}, 
and it dates back at least to the fifties with the work of Agostinelli \cite{agostinelli1956} 
and fifteen years later with the works of Iliev \cite{iliev1,iliev2}.
More recently new tools and techniques, with a strong relation with the symmetries of the system, have been introduced in order to understand the dynamical and geometrical aspects of nonholonomic systems, such as nonholonomic momentum map, momentum equations, and gauge momenta. 
In the present paper, we investigate the existence of first integrals of the nonholonomic dynamics coming from the presence of symmetries using these tools and the so-called {\it gauge method}, introduced in \cite{BGM96} 
and further developed in \cite{FGS2008,FGS2009, FGS2012}.

\subsection{Main results of the paper}
Given a nonholonomic system with a symmetry described by the (free and proper) 
action of a Lie group $G$,  we consider functions of  type $J_\xi = \langle J,\xi\rangle$, where 
$J$ is the canonical momentum map and $\xi$ is a section of the bundle $Q\times \g \to Q$, with the property that 
the infinitesimal generator of each $\xi(q)$, $q\in Q$, is tangent to the constraint distribution.
Theorem \ref{T:Main} gives conditions on nonholonomic systems ensuring that the presence 
of symmetries induces the existence of first integrals of type $J_\xi$, called {\it horizontal gauge momenta} 
(while the section $\xi$ is called a {\it horizontal gauge symmetry})
\cite{BGM96}. Denoting by $k$ the rank of the distribution $S$ given by the intersection of the 
constraint distribution $D$ with the tangent space to the $G$-orbits, we characterize the 
nonholonomic systems that admit exactly $k$ horizontal gauge momenta that are functionally 
independent and $G$-invariant. Precisely, we write an explicit 
{\it system of linear ordinary differential equations} whose solutions give rise to the 
$k$ horizontal gauge momenta.    

These results are based on an intrinsic {\it momentum equation} that characterizes the horizontal gauge momenta.   
We also show that this intrinsic momentum equation can be regarded as a parallel transport equation,
that is,  we prove that a horizontal gauge symmetry $\xi$ is a parallel section along the nonholonomic 
dynamics on $Q$, with respect to an affine connection defined on (a subbundle of)  $Q\times\g\to Q$. 
This affine connection arises by adding to the Levi-Civita connection a bilinear form that 
carries the information related to the system of differential equations determining the horizontal gauge momenta.


The fact that we know the exact number of horizontal gauge momenta and have a systematic 
way of constructing them has fundamental consequences on the geometry and dynamics 
of nonholonomic systems, see e.g. \cite{hermans,zenkov1995,BMK2002,FGS2005,FG2007,CDS,balseiro2017,GNMontaldi}.
%
%
%
Under the hypotheses of Theorem~\ref{T:Main} we first show that 
the reduced dynamics is integrable by quadratures and, if some compactness issues are satisfied, 
it is indeed periodic (Theorem~\ref{T:reduced-integrability}). 
From a more geometric point of view, if the reduced dynamics is periodic,  we have that 
the reduced space inherits the structure of an $S^1$-principal bundle outside the equilibria. 
Second, we prove (Theorem~\ref{T:BalYapu19}) the {\it hamiltonization} of these nonholonomic systems 
(see also \cite{GNMontaldi,BalYapu19}); precisely  
the existence of $k = \textup{rank}(S)$ horizontal gauge momenta and 
the fact that $\textup{dim}(Q/G)=1$ guarantee the existence of a {\it Poisson} bracket on 
the reduced space $\M/G$ that describes the reduced dynamics.  This bracket is 
constructed using a {\it dynamical gauge transformation by a 2-form} that we also 
show to be related to the {\it momentum equation}. 
Third, when the reduced dynamics is periodic, we can obtain information on the
complete dynamics (Theorem~\ref{T:reconstruction}). In particular, if the symmetry group $G$ 
is compact, the reconstructed dynamics is quasi-periodic on tori of dimension at most 
$r+1$, where $r$ is the rank of the Lie group $G$, and the phase space inherits the structure 
of a torus bundle. If the symmetry group is not compact, the situation is less simple, but 
still understood: the complete dynamics is either quasi-periodic or
diffeomorphic to $\bR{}$, and whether one or the other case is more frequent or generic depends 
on the symmetry group (see Section~\ref{Sec:c-integrability}, 
Appendix~\ref{app:rec} and \cite{AM1997,FPZ2020}).

{\small\begin{table}[h]\begin{center} 
\begin{tabular}{c|c|c|c}
  \textcolor{blue}{\bf System} & \textcolor{blue}{\bf Symmetry} & \textcolor{blue}{\bf rank$(S)$} &
  \textcolor{blue}{\bf{\scriptsize{ \begin{tabular}{c} $\sharp$ horizontal \\ gauge momenta \end{tabular} }}}  \\
  \hline
  &&&\\
  {\bf Nonholonomic oscillator} & $\bT{2}$ & 1 & \textcolor{red}{\bf 1} \\
  \hline
  &&&\\
  Vertical Disk & $\textrm{SE}(2)\times S^1$ & 2 & \textcolor{red}{\bf 2} \\
  \hline
  &&&\\
  Tippe--top & $\textrm{SE}(2)\times S^1$ & 2 & \textcolor{red}{\bf 2}  \\
  \hline
  &&&\\
  Falling disk & $\textrm{SE}(2)\times S^1$ & 2 & \textcolor{red}{\bf 2}  \\
  \hline
  &&&\\
  {\bf Snakeboard} & $\textrm{SE}(2)\times S^1$ & 2 & \textcolor{red}{\bf 2} \\  
  \hline
  &&&\\
  {\bf Body of revolution} & $\textrm{SE}(2)\times S^1$ & 2 & \textcolor{red}{\bf 2}  \\
  \hline
  &&&\\
  Ball in a cylinder & $\textrm{SO}(3)\times S^1$ & 2 & \textcolor{red}{\bf 2} \\
  \hline
  &&&\\
  {\bf Ball in a cup/cap} & $\textrm{SO}(3)\times S^1$ & 2 & \textcolor{red}{\bf 2}     \\
  \hline  
  &&&\\
  Ball in a surface of revolution & $\textrm{SO}(3)\times S^1$ & 2 & \textcolor{red}{\bf 2}     \\  
\end{tabular}
\end{center}
\caption{Nonholonomic systems and related horizontal gauge momenta with respect to the symmetry.}
\label{tab:table}
\end{table}
}

Table~\ref{tab:table}  
shows how  many classical examples of nonholonomic systems fit into the scheme of 
Theorem~\ref{T:Main} and also puts in evidence the relation between the $\textup{rank}(S)$ 
and the number of horizontal gauge symmetries as stated in the theorem.  
We study in detail four of these examples: the nonholonomic oscillator,  the snakeboard, a solid of revolution rolling on a plane 
and a heavy homogeneous ball rolling on a surface of revolution. 
In particular, the last two examples are paradigmatic of a large class of nonholonomic systems with symmetry.
In the case of the snakeboard, we find two horizontal gauge momenta, one of which, 
as far as we know, has not appeared in the literature before. 
We use this fact to prove the integrability by quadrature of the reduced system and its hamiltonization.   
Then we investigate what happens 
in certain examples when the hypotheses of Theorem \ref{T:Main} are not satisfied.  In these cases, using the intrinsic momentum equation, it is still possible to find horizontal gauge momenta (in some cases, less than $k$ of them).


\subsection{Outline of the paper}
The paper is organized as follows: in Section 2 we recall the basic aspects and notations of nonholonomic 
systems and horizontal gauge momenta.   In Section \ref{S:MainResult} we present an intrinsic formulation of the
momentum equation and the main result of the paper, Theorem \ref{T:Main}. The results of this Section are illustrated with the example of the nonholonomic oscillator. 
The fundamental consequences of Theorem \ref{T:Main}, integrability and hamiltonization, are studied in Section 
\ref{Sec:Consequences}.  
Finally, in Section~\ref{S:Examples} we first apply our 
techniques and results to three paradigmatic examples outlined in bold in Table~\ref{tab:table}. 
Moreover, we also study different cases where the hypotheses of Theorem \ref{T:Main} 
are not satisfied. The paper is complemented by two appendices: App. \ref{A:Hamiltonization}  recalls basic defintions regarding almost Poisson brackets and gauge tranformations, and App. \ref{app:rec} presents basic facts about reconstruction theory.
Throughout the work, we assume that all objects (functions, manifolds, distributions, etc) 
are smooth. 
Moreover, unless stated otherwise, we consider Lie group actions that are free and proper or 
we confine our analysis in the submanifold where the action is free and proper. 
Finally, whenever possible, summation over repeated indices is understood.

{\bf Acknowledgement:}  P.B. would like to thank University of Padova and Prof. F. Fass\`o for the kind hospitality during her visit and to  CNPq (Brazil) for financial support. N.S. thanks IMPA and Prof. H. Bursztyn, PUC-Rio and Prof. A. Mandini for the kind hospitality during all his visits in Rio de Janeiro. P.B. and N.S. also thank
F. Fass\`o and A. Giacobbe for many interesting and useful discussions on finite dimensional 
non-Hamiltonian integrable systems and Alejandro Cabrera and Jair Koiller for their insightful comments.

\section{Initial Setting: Nonholonomic systems and horizontal gauge momenta}\label{sec:HGM}

\subsection{Nonholonomic systems with symmetries}\label{Ss:InitialSetting}
A nonholonomic system is a mechanical system on a configuration manifold $Q$ with (linear) 
constraints in the velocities.  The permitted velocities are represented by a nonintegrable constant-rank
distribution $D$ on $Q$.  
A nonholonomic system, denoted by the pair $(L,D)$, is given by 
a manifold $Q$, a lagrangian function $L:TQ\to \R$ of mechanical type, i.e., $L = \kappa - U$ 
for $\kappa$ and $U$ the kinetic and potential energy respectively, and a nonintegrable 
distribution $D$ on $Q$.  We now write the equations of motion of such systems following \cite{BS93}. 

Since the lagrangian $L$ is of mechanical type, the Legendre transformation $Leg:TQ \to T^*Q$ 
defines the submanifold $\M := Leg(D)$ of $T^*Q$.  Moreover, since $Leg$ is linear on the fibers, 
$\tau_\subM:= \tau|_\M : \M\to Q$ is also a subbundle of $\tau: T^*Q \to Q$, where $\tau$ 
denotes canonical projection.  Then,  if $\Omega_Q$ denotes the canonical 2-form on $T^*Q$ and $H$ 
the hamiltonian function induced by the lagrangian $L$, we denote by 
$\Omega_\subM := \iota^*\Omega_Q$ and $H_\subM := \iota^* H$ the 2-form and the 
hamiltonian on $\M$, where $\iota : \M\to T^*Q$ is the natural inclusion.
We define the (noningrable) distribution $\C$ on $\M$ given, at each $m\in\M$, by 
\begin{equation}\label{Def:C}
\C_m := \{v_m \in T_m\M \ : T\tau_\subM(v_m) \in D_q \mbox{ for } q = \tau_\subM(m)\}. 
\end{equation}

The nonholonomic dynamics is then given by the integral curves of the vector field $X_\nh$ on $\M$, 
taking values in $\C$ (i.e., $X_\nh(m) \in \C_m$) such that 
\begin{equation}\label{Eq:NHDyn}
{\bf i}_{X_\nh} \Omega_\subM |_\C = dH_\subM |_\C,
\end{equation}
where $\Omega_\subM |_\C$ and $dH_\subM|_\C$ are the point-wise restriction of the forms to $\C$. 
It is worth noticing that the 2-section $\Omega_\subM |_\C$ is nondegenerate and thus we have 
a well defined vector field $X_\nh$ satisfying \eqref{Eq:NHDyn}, called the {\it nonholonomic vector field}.

On the hamiltonian side we will denote a nonholonomic system by the triple 
$(\M, \Omega_\subM|_\C, H_\subM)$.   

\medskip

\noindent {\bf Symmetries of a nonholonomic system.}   We say that an action of a Lie group $G$ 
on $Q$ defines a {\it symmetry} of the nonholonomic system $(L,D)$  if it is 
free and proper and its tangent lift leaves $L$ and $D$ invariant. 


Let $\g$ be the Lie algebra associated to the Lie group $G$. At each $q\in Q$, 
we denote by $V_q \subset T_qQ$ the tangent space to the $G$-orbit at $q$, 
that is $V_q := \textup{span} \{ \eta_Q(q) : \eta \in \g   \}$, 
where $\eta_Q (q)$ denotes the infinitesimal generator of $\eta$ at $q$. 
 
The lift of the $G$-action to the cotangent bundle $T^*Q$ leaves also the submanifold 
$\M \subset T^*Q$ invariant, hence there is a well defined $G$-action on $\M$ denoted by 
$\Psi:G\times \M \to \M$.  
The hamiltonian function $H_\subM$ and the 2-section $\Omega_\subM|_\C$ are 
$G$-invariant and we say that $(\M, \Omega_\subM|_\C, H_\subM)$ is a 
{\it nonholonomic system with a $G$-symmetry}. We denote by $\mathcal{V}_m \subset T_m\M$ the tangent space to the $G$-orbit at $m\in\M$ (i.e., $\mathcal{V}_m =\{ \eta_\subM(m) : \eta\in\g\}$).

\begin{definition}[\cite{BKMM}]\label{Def:DimAssum} A nonholonomic system $(\M, \Omega_\subM|_\C, H_\subM)$
with a $G$-symmetry verifies the {\it dimension assumption} if, for each $q\in Q$,  
\begin{equation}\label{Eq:dim-assumption}
T_qQ = D_q + V_q. 
\end{equation}
\end{definition}
Equivalently, the dimension assumption can be stated as $T_m\M = \C_m + \mathcal{V}_m$ for each $m\in\M$.

At each $q\in Q$, we define the distribution $S$ over $Q$ whose fibers are 
$S_q := D_q\cap V_q$ and the distribution $\g_S$ over $Q$ with fibers 
\begin{equation}\label{Def:gS}
(\g_S)_q = \{\xi^q\in \g \ : \ \xi_Q(q) \in S_q\},
\end{equation}
where $\xi_Q(q):= (\xi^q)_Q(q)$. Due to the dimension assumption \eqref{Eq:dim-assumption}, 
$\g_S\to Q$ is a vector subbundle of $Q\times \g\to Q$ and, if the action is free then $\textup{rank}(S) = \textup{rank}(\g_S)$ (see \cite{balseiro2017}).  
During this article, we denote by $\Gamma(\g_S)$ the {\it sections} of the bundle $\g_S\to Q$.

\medskip

\noindent {\bf Reduction by symmetries.} If  $(\M, \Omega_\subM|_\C, H_\subM)$ is a nonholonomic 
system with a $G$-symmetry, the nonholonomic vector field $X_\nh$ is $G$-invariant, i.e., 
$T\Psi_g(X_\nh(m)) = X_\nh(\Psi_g(m))$ with $\Psi_g :\M\to\M$ the $G$-action on $\M$ and $g\in G$, 
and hence it can be reduced to the quotient space $\M/G$. More precisely, denoting by $\rho:\M\to \M/G$ 
the orbit projection, the reduced dynamics on $\M/G$ is described by the integral curves of 
the vector field 
\begin{equation} \label{Eq:RedDyn}
X_\red := T\rho(X_\nh).
\end{equation}


\medskip

\noindent {\bf Splitting of the tangent bundle.} The dimension assumption ensures the existence of a  {\it vertical complement $W$ of the constraint distribution} $D$ (see \cite{balseiro2014}), that is, $W$ is a distribution on $Q$ so that
\begin{equation}\label{Eq:TQ=D+W}
TQ = D \oplus W \qquad \mbox{where} \qquad W \subset V.
\end{equation}

A vertical complement $W$ also induces  a splitting of the vertical space $V=S\oplus W$.  
Moreover, there is a one to one correspondence between the choice of an $Ad$-invariant 
subbundle $\g_W \to Q$ of $\g\times Q\to Q$ such that, at each $q\in Q$, 
\begin{equation}\label{Eq:g=gs+gw}
(\g\times Q)_q = (\g_S)_q \oplus (\g_W)_q,
\end{equation}
and the choice of a $G$-invariant vertical complement of the constraints $W$. 
\begin{remark}
  If the $G$-action is free, the existence of a $G$-invariant vertical complement $W$ 
  is guaranteed by choosing $W = S^\perp \cap V$, where $S^\perp$ denotes the orthogonal 
  complement of $S$ with respect to the ($G$-invariant) kinetic energy metric (however $W$ 
  does not have to be chosen in this way). 
  In the case of non-free actions, as anticipated in the Introduction, 
  we restrict our study to the submanifold $\widetilde Q$ of 
  $Q$ where the action is free (see Examples \ref{Ex:Solids} and 
  \ref{Ex:BallSurface})\footnote{If the action is not free, it can be proven that for compact Lie groups $G$ (or the product of a compact Lie group and a vector space), the dimension assumption guarantees that it is always possible to choose a $G$-invariant vertical complement $W$, \cite{balseiro2017}.}.
\end{remark}


Next, we pull back the decomposition \eqref{Eq:TQ=D+W} to $\M$.  
From \eqref{Eq:TQ=D+W} and  \eqref{Def:C} we obtain the corresponding decomposition on $T\M$, 
\begin{equation} \label{Eq:TM=C+W}
T\M = \C \oplus \W \qquad \mbox{with} \qquad \W\subset \V,
\end{equation}
where, at each $m \in \M$,  $\W_m = \{ (\xi^q)_\subM(m) \ : \ \xi^q \in (\g_\subW)_q \mbox{ for } q = \tau_\subM(m)\}$.  We define the distribution $\S = \C \cap \V$ or equivalently, for each $m\in \M$, 
\[
  \S_m = \{(\xi^q)_\subM(m) \ : \  \xi^q \in (\g_S)_q \ \mbox{for } q = \tau_\subM(m)\}.
\]


\subsection{Horizontal gauge momenta}\label{Ss:HGM}

Consider a nonholonomic system $(\cM,\Omega_\subM|_\C,H_\subM)$ with a $G$-symmetry and recall that $\Theta_\subM$ is the Liouville 1-form restricted to $\M$ (i.e., $\Theta_\subM := \iota^*\Theta_Q$).
It is well known that for a nonholonomic system, an element $\eta$ of the Lie algebra does not necessarily induce
a first integral of the type $\mathcal{J}_\eta$ (see \cite{FS2010} for a discussion of this fact). 

\begin{definition}[\cite{BGM96,FGS2008}] \label{Def:HGM} A function ${\mathcal J} \in C^\infty(\M)$ is a {\it horizontal gauge momentum} if there exists $\zeta \in \Gamma(\g_S)$ such that ${\mathcal J}= {\mathcal J}_\zeta := {\bf i}_{\zeta_\M} \Theta_\subM$ and also ${\mathcal J}$ is a first integral of the nonholonomic dynamics $X_\nh$, i.e., $X_\nh(\mathcal{J}) = 0$. In this case, the section $\zeta \in \Gamma(\g_S)$ is called {\it horizontal gauge symmetry}.   
\end{definition}
 
We are interested in looking for horizontal gauge momenta of a given nonholonomic system with symmetries satisfying the 
dimension assumption. Looking for a horizontal gauge momentum $\mathcal{J}$ is equivalent 
to look for the corresponding horizontal gauge symmetry.

\begin{remark} 
The original definition of {\it horizontal gauge momentum} introduced in \cite{BGM96} (and later in \cite{FGS2008,FGS2012}) was not exactly as in Definition~\ref{Def:HGM} but given in local coordinates. 
\end{remark}

The {\it nonholonomic momentum map} (\cite{BKMM}) $J^\nh:\M\to \g_S^*$ is the bundle map over the identity, given, for each $m\in \M$ and $\xi \in \g_S |_m$, by 
\begin{equation}\label{Eq:NHMomMap}
\langle J^\nh, \xi\rangle (m) =  {\bf i}_{\xi_\M} \Theta_\subM (m).
\end{equation}
Hence, a horizontal gauge momentum can also be seen as a function of the type $\langle J^\nh, \zeta\rangle \in C^\infty(\M)$ that is a first integral of $X_\nh$.  

\begin{proposition} 
A nonholonomic system $(\M, \Omega_\subM|_\C, H_\subM)$ with a $G$-symmetry 
satisfying the dimension assumption admits, at most, $k=\textup{rank}(S)$ (functionally 
independent) horizontal gauge momenta.   
\end{proposition}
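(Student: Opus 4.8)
The plan is to reduce the statement to linear algebra on the fibres of $\tau_\subM:\M\to Q$, exploiting that a horizontal gauge momentum is fibrewise linear. The first step is the identity: for $\zeta\in\Gamma(\g_S)$ and $m\in\M$ over $q=\tau_\subM(m)$, since $\Theta_\subM=\iota^*\Theta_Q$ and the infinitesimal generator $\zeta_\M$ projects under $T\tau_\subM$ to $\zeta_Q$, one has $\mathcal{J}_\zeta(m)={\bf i}_{\zeta_\M}\Theta_\subM(m)=\langle m,\zeta_Q(q)\rangle$. Because $L$ is of mechanical type, $\textup{Leg}|_{D_q}=\kappa^\flat|_{D_q}$, so $\M_q=\textup{Leg}(D_q)$ is a linear subspace of $T^*_qQ$ and, $\kappa$ being positive‑definite, $v\mapsto\langle\,\cdot\,,v\rangle|_{\M_q}$ restricts to an isomorphism $D_q\cong\M_q^{*}$; hence $\mathcal{J}_\zeta|_{\M_q}$ is the linear functional on $\M_q$ that corresponds to $\zeta_Q(q)\in S_q\subset D_q$, and it determines and is determined by $\zeta_Q(q)$. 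Since the dimension assumption forces $\dim S_q=\textup{rank}(D)+\dim G-\dim Q=k$ at every $q$, any $k+1$ sections $\zeta_0,\dots,\zeta_k\in\Gamma(\g_S)$ satisfy: $\zeta_0(q),\dots,\zeta_k(q)$ are linearly dependent in $(\g_S)_q$ for every $q$; equivalently, the ``vertical'' parts $d\mathcal{J}_{\zeta_i}(m)|_{T_m\M_q}$ span at most a $k$‑dimensional space.

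The second step upgrades this fibrewise dependence to functional dependence. Assume $\mathcal{J}_{\zeta_0},\dots,\mathcal{J}_{\zeta_k}$ are horizontal gauge momenta and work on the open dense subset of $Q$ where $\textup{span}\{\zeta_0(q),\dots,\zeta_k(q)\}$ has locally constant rank; on a trivialising cover, after relabelling, $\zeta_1,\dots,\zeta_r$ is a local frame of this span and each remaining section (in particular $\zeta_0$) is $\zeta_j=\sum_{i=1}^r f^{(j)}_i\zeta_i$ with $f^{(j)}_i\in C^\infty(Q)$. By the $C^\infty(Q)$‑linearity of $\zeta\mapsto\mathcal{J}_\zeta$ this gives $\mathcal{J}_{\zeta_j}=\sum_i(f^{(j)}_i\circ\tau_\subM)\,\mathcal{J}_{\zeta_i}$; applying $X_\nh$ and using that all $\mathcal{J}_{\zeta_i}$ are first integrals,
\[
\sum_{i=1}^{r} X_\nh(f^{(j)}_i\circ\tau_\subM)\,\mathcal{J}_{\zeta_i}=0 .
\]
Restricting this identity to a fibre $\M_q$ and identifying $\M_q\cong D_q$ via $m\mapsto\textup{Leg}^{-1}(m)=T\tau_\subM X_\nh(m)$ (the realised velocity, which ranges over all of $D_q$), $\mathcal{J}_{\zeta_i}|_{\M_q}$ becomes $v\mapsto\kappa(v,(\zeta_i)_Q(q))$ and $X_\nh(f^{(j)}_i\circ\tau_\subM)|_{\M_q}$ becomes $v\mapsto\langle df^{(j)}_i(q),v\rangle$, both linear on $D_q$. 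The displayed identity is thus the vanishing on $D_q$ of the quadratic form $v\mapsto\sum_i\langle df^{(j)}_i,v\rangle\,\kappa(v,(\zeta_i)_Q(q))$; polarising and using that $(\zeta_1)_Q(q),\dots,(\zeta_r)_Q(q)$ are independent in $S_q$ one extracts that $df^{(j)}_i$ annihilates $D$. Since $D$ is nonintegrable, $df|_D=0$ propagates to iterated brackets and forces $df^{(j)}_i$ to be locally constant (modulo the standard reduction to a leaf of the holonomic closure of $D$). Then $\mathcal{J}_{\zeta_j}$ is, locally, a constant‑coefficient combination of $\mathcal{J}_{\zeta_1},\dots,\mathcal{J}_{\zeta_r}$, so $d\mathcal{J}_{\zeta_0}\wedge\cdots\wedge d\mathcal{J}_{\zeta_k}=0$ on a dense open set, i.e.\ the $k+1$ horizontal gauge momenta are functionally dependent, which gives the bound.

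\textbf{Main obstacle.} The delicate point is the extraction step, i.e.\ passing from $\sum_i X_\nh(f^{(j)}_i\circ\tau_\subM)\mathcal{J}_{\zeta_i}=0$ to $df^{(j)}_i|_D=0$. For $r=1$ it is immediate: dividing by $\mathcal{J}_{\zeta_1}$ where it is nonzero gives $X_\nh(f^{(j)}_1\circ\tau_\subM)=0$, hence $df^{(j)}_1|_D=0$. For $r\ge 2$ the quadratic‑form argument only yields, a priori, that the $D_q$‑component of the $\kappa$‑gradient of $f^{(j)}_i$ lies in $\textup{span}\{(\zeta_l)_Q(q)\}$ with an \emph{antisymmetric} coefficient matrix, and to conclude that it vanishes one must bring in more of the dynamics — either the momentum equations satisfied by the $\zeta_i$ themselves, or a second‑order Taylor expansion of the conserved quantities $\mathcal{J}_{\zeta_i}$ along the flow of $X_\nh$. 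All the rest is linear algebra on the fibres together with the dimension assumption; the first‑integral hypothesis is used only here, and it is essential, since without it $(f\circ\tau_\subM)\mathcal{J}_\zeta$ would be another function of the form $\mathcal{J}_{f\zeta}$, generically functionally independent of $\mathcal{J}_\zeta$.
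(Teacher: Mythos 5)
Your first step --- that $\mathcal{J}_\zeta$ restricted to a fibre $\M_q$ is the linear functional $v\mapsto\kappa(v,\zeta_Q(q))$ determined by $\zeta_Q(q)\in S_q$, so that any $k+1$ horizontal gauge momenta have pointwise linearly dependent fibrewise parts because $\dim S_q=k$ --- is precisely the content of the paper's own (one-line) proof, which just records that functional independence of $J_1,J_2$ forces linear independence of $\xi_1,\xi_2$ and lets the bound follow from $\textup{rank}(\g_S)=k$. You go further than the paper in observing, correctly, that this alone does not yield functional dependence of the momenta: a $C^\infty(Q)$-linear relation $\zeta_0=\sum_i f_i\zeta_i$ gives $\mathcal{J}_{\zeta_0}=\sum_i(f_i\circ\tau_\subM)\mathcal{J}_{\zeta_i}$, and for nonconstant $f_i$ this does not make the differentials dependent. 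Your plan to use the first-integral hypothesis to force the $f_i$ to be constant is the right instinct, but it is exactly where the argument breaks.

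Concretely, the identity $\sum_i X_\nh(f_i\circ\tau_\subM)\,\mathcal{J}_{\zeta_i}=0$, read on a fibre as the vanishing on $D_q$ of the quadratic form $v\mapsto\sum_i\langle df_i,v\rangle\,\kappa(v,Y_i)$ with $Y_i=(\zeta_i)_Q(q)$, only forces the symmetric tensor $\sum_i\bigl(W_i\otimes Y_i+Y_i\otimes W_i\bigr)$ to vanish, where $W_i$ is the $\kappa$-representative in $D_q$ of $df_i|_{D_q}$. This has nonzero solutions as soon as $r\ge 2$ (for instance $W_1=Y_2$, $W_2=-Y_1$), so the claimed extraction ``$df_i$ annihilates $D$'' is not justified, and the subsequent bracket-propagation step has nothing to stand on. You flag this yourself in your ``main obstacle'' paragraph, but flagging it does not repair it: as written the proof establishes the bound only for $k=1$, while the paper's examples live at $k=2$. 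To close it one would have to feed in the momentum equation satisfied by each $\zeta_i$ (Proposition 3.4 / Remark 3.5 of the paper), which constrains the admissible coefficient functions far beyond the antisymmetry you obtain; the paper's own proof does not carry this out either, but your write-up makes the missing step explicit rather than supplying it, so the argument is incomplete.
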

\begin{proof}
Consider $\xi_1, \xi_2\in \Gamma(\g_S)$. It is easy to see that if $J_1={\bf i}_{(\xi_1)_\subM}\Theta_\subM$ and $J_2={\bf i}_{(\xi_2)_\subM}\Theta_\subM$ are functionally independent functions then $\xi_1, \xi_2$ are linearly independent.
\end{proof}

Observe that the existence of a horizontal gauge momentum, implies the existence of a global section on $\g_S\to Q$. Hence, in order to prove that a nonholonomic system admits exactly $k$ horizontal gauge symmetries, we have to assume the triviality of the bundle $\g_S \to Q$, that is,  $\g_S\to Q$ admits a global basis of sections that we denote by 
\begin{equation}\label{Eq:Basis_gS}
\mathfrak{B}_{\g_S} = \{ \xi_1,..., \xi_k\}.
\end{equation}
 The basis $\mathfrak{B}_{\g_S}$ induces functions $J_1, ..., J_k$ on $\M$ (linear on the fibers) defined by 
\begin{equation}\label{Eq:J_i}
J_i := \langle J^\nh,\xi_i\rangle = {\bf i}_{(\xi_i)_\M} \Theta_\subM \qquad \mbox{for } i=1,...,k. 
\end{equation}
If ${\mathcal J} \in C^\infty(\M)$ is a horizontal gauge momentum with $\zeta$ its associated horizontal gauge symmetry, then $\mathcal{J}$ and $\zeta$ can be written, with respect to the basis \eqref{Eq:Basis_gS}, as
\begin{equation}\label{Eq:DefJi}
{\mathcal J} = f_i J_i \qquad \mbox{and} \qquad \zeta = f_i \xi_i, \quad \mbox{for} \ f_i\in C^\infty(Q).
\end{equation}
We call the functions $f_i$, $i=1,...,k$ the {\it coordinate functions of} $\mathcal{J}$ 
with respect to the basis $\mathfrak{B}_{\g_S}=\{\xi_1,...,\xi_k\}$.

From now, if not otherwise stated, we assume the following conditions on the symmetry given by the action of the Lie group $G$.

\medskip

\begin{ConditionsA} We say that a nonholonomic system with a $G$-symmetry 
satisfies {\it Conditions} $\cA$ if 
\begin{enumerate}
  \item[$(\cA1)$] the dimension assumption \eqref{Eq:dim-assumption} is fulfilled; 
  \item[$(\cA2)$] the bundle $\g_S\longrightarrow Q$ is trivial; 
  \item[$(\cA3)$] the action of $G$ on $Q$ is proper and free.
  \end{enumerate}
\end{ConditionsA}

A section $\xi$ of the bundle $Q\times \g \to Q$ is $G$-{\it invariant} if $[\xi,\eta]=0$ for all $\eta \in \g$.
As a consequence of Conditions $\cA$ we obtain the following Lemma.

\begin{lemma}\label{L:invariance} Consider a nonholonomic system with a $G$-symmetry 
satisfying Conditions $\cA$, then  
 \begin{enumerate}
 \item[$(i)$] there exists a global basis $\mathfrak{B}_{\g_S}$ of $\Gamma(\g_S)$ given by $G$-invariant sections.
  \item[$(ii)$] Let $\xi\in \Gamma(\g_S)$. The function $J_\xi = {\bf i}_{\xi_\subM}\Theta_\subM$ is $G$-invariant if and only if $\xi\in \Gamma(\g_S)$ is $G$-invariant.
  \item[$(iii)$]  Let $\rho_{\subQ} :Q\to Q/G$ be the orbit projection associated to the $G$-action on $Q$. 
 If $X\in \mathfrak{X}(Q)$ is $\rho_\subQ$-projectable, then $[X, \xi_Q] \in \Gamma(V)$, for $\xi \in \Gamma(Q\times \g\to Q)$.
 \end{enumerate}

\end{lemma}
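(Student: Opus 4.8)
The plan is to establish the three items of Lemma~\ref{L:invariance} by exploiting the interplay between Conditions $\cA$ (freeness, properness, triviality of $\g_S$) and the structure of the bundle $\g_S \to Q$. Throughout I will use the fact that a section $\xi$ of $Q \times \g \to Q$ is $G$-invariant precisely when $[\xi,\eta] = 0$ for every $\eta \in \g$, where the bracket is the one naturally induced on sections of the trivial bundle (combining the Lie algebra bracket with the derivative of $\xi$ along infinitesimal generators).

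\textbf{Item $(i)$.} Since $\g_S \to Q$ is trivial by $(\cA2)$, it admits a global frame; the issue is to produce one whose sections are $G$-invariant. The key observation is that $\g_S$ is $\mathrm{Ad}$-invariant in the sense that the $G$-action on $Q$ lifts to a $G$-action on $\g_S$ (because both $D$ and $V$ are $G$-invariant, hence so is $S = D \cap V$, and therefore so is the fiberwise-defined $\g_S$). Thus $\g_S \to Q$ is a $G$-equivariant vector bundle over $Q$, and since the action on $Q$ is free and proper by $(\cA3)$, it descends to a vector bundle $\g_S/G \to Q/G$. Any local frame of $\g_S/G \to Q/G$ pulls back to a $G$-invariant local frame of $\g_S \to Q$; the triviality of $\g_S\to Q$ (together with, if needed, the triviality of the associated quotient bundle, which follows since a bundle is trivial iff it admits a global frame and $G$-invariance of sections is detected fiberwise over $Q/G$) then lets us patch these into a global $G$-invariant frame $\mathfrak{B}_{\g_S} = \{\xi_1,\dots,\xi_k\}$. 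I would phrase this as: a global frame of the quotient bundle $\g_S/G \to Q/G$ exists because $\g_S \to Q$ is trivial and the two bundles have the same rank, and pulling it back yields the desired $G$-invariant global basis.

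\textbf{Item $(ii)$.} This is a direct computation. For $\xi \in \Gamma(\g_S)$ and $\eta \in \g$, one has the identity $\eta_\M(J_\xi) = \eta_\M(\mathbf{i}_{\xi_\M}\Theta_\subM) = \mathbf{i}_{[\eta_\M,\xi_\M]}\Theta_\subM + \mathbf{i}_{\xi_\M}(\mathcal{L}_{\eta_\M}\Theta_\subM)$, using that the Lie derivative is a derivation with respect to interior product. The Liouville form $\Theta_\subM$ is $G$-invariant (it is the restriction to $\M$ of the canonical $\Theta_Q$, which is invariant under cotangent lifts), so $\mathcal{L}_{\eta_\M}\Theta_\subM = 0$; and $[\eta_\M,\xi_\M] = -[\xi,\eta]_\M$ by the standard relation between infinitesimal generators and the bracket on sections. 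Hence $\eta_\M(J_\xi) = -\mathbf{i}_{[\xi,\eta]_\M}\Theta_\subM = -J_{[\xi,\eta]}$. Since the $G$-action is free, $J_\xi$ is $G$-invariant iff $\eta_\M(J_\xi) = 0$ for all $\eta$, iff $J_{[\xi,\eta]} = 0$ for all $\eta$, iff $[\xi,\eta] = 0$ for all $\eta$ (the last equivalence because $\xi' \mapsto J_{\xi'}$ is injective on sections by the argument in the proof of the Proposition above, and $[\xi,\eta]\in\Gamma(\g_S)$ since $\g_S$ is $\mathrm{Ad}$-invariant), i.e.\ iff $\xi$ is $G$-invariant.

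\textbf{Item $(iii)$.} Here $X \in \mathfrak{X}(Q)$ is $\rho_\subQ$-projectable, meaning $[X,\eta_Q] \in \Gamma(V)$ for all $\eta \in \g$ (projectability of a vector field is equivalent to its bracket with all vertical fields being vertical). For $\xi \in \Gamma(Q\times\g \to Q)$ write $\xi = \sum_a g^a \eta_a$ with $\eta_a$ a basis of $\g$ and $g^a \in C^\infty(Q)$, so $\xi_Q = \sum_a g^a (\eta_a)_Q$. Then $[X,\xi_Q] = \sum_a \big( X(g^a)\,(\eta_a)_Q + g^a\,[X,(\eta_a)_Q]\big)$. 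The first term lies in $\Gamma(V)$ since each $(\eta_a)_Q$ is vertical, and the second lies in $\Gamma(V)$ by projectability of $X$; hence $[X,\xi_Q] \in \Gamma(V)$. \emph{The main obstacle} is the patching step in item $(i)$: one must be careful that local $G$-invariant frames obtained downstairs on $Q/G$ can be glued to a global one, which uses triviality of $\g_S\to Q$ in an essential way — if $\g_S\to Q$ were merely locally trivial this could fail. Everything else is a routine unwinding of the definitions of infinitesimal generators, the bracket on sections, and invariance of the Liouville form.
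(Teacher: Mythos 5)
Your proposal is correct, and it is worth noting that it is considerably more self-contained than the paper's own proof: for items $(ii)$ and $(iii)$ the paper simply cites \cite[Lemma~3.8]{BalYapu19}, whereas you supply the actual arguments — the Cartan-calculus identity $\eta_\M(J_\xi)=\mathbf{i}_{[\eta_\M,\xi_\M]}\Theta_\M$ (using invariance of the Liouville form and injectivity of $\xi\mapsto J_\xi$, which follows from freeness and nondegeneracy of $\kappa$ on $D$) for $(ii)$, and the expansion $[X,\xi_Q]=X(g^a)(\eta_a)_Q+g^a[X,(\eta_a)_Q]$ for $(iii)$; both computations are sound. For item $(i)$ your route (view $\g_S\to Q$ as a $G$-equivariant bundle, descend to $\g_S/G\to Q/G$, pull back a global frame) is essentially the same as the paper's, which instead asserts that $(\cA2)$--$(\cA3)$ give a global $G$-invariant frame $\{Y_i\}$ of $S$ and then uses freeness to lift it to $G$-invariant sections $\xi_i$ of $\g_S$. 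One caveat: your stated justification that the quotient bundle is trivial ``because $\g_S\to Q$ is trivial and the two bundles have the same rank'' is not actually a proof — triviality of a pullback bundle does not in general imply triviality of the bundle downstairs (the pullback of $TS^2$ to $S^3$ along the Hopf fibration is trivial while $TS^2$ is not). The paper makes the same unargued leap in asserting the existence of a global $G$-invariant basis from $(\cA2)$ and $(\cA3)$ alone, so this is a shared weakness rather than a defect of your write-up relative to the paper, but you should not present the rank-counting remark as if it closed the gap.
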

\begin{proof}
 Items $(ii)$ and $(iii)$ were already proven in \cite[Lemma~3.8]{BalYapu19}. 
 To prove item $(i)$ observe that items ($\cA2$) and ($\cA3$) imply that $S$ admits a global basis of $G$-invariant sections $\{Y_1,...,Y_k\}$, i.e., $[Y_i, \nu_Q]=0$ for all $\nu\in\g$.  
 Since the action is free, we conclude that, for $(\xi_i)_\subQ=Y_i$  we have that  $[\xi_i,\nu]\in \Gamma(Q\times \g)$ is the zero section and thus $\xi_i$ are $G$-invariant.  
\end{proof}

Under Conditions $\cA$, we guarantee the existence of a global $G$-invariant basis $\mathfrak{B}_{\g_S}$ of sections of $\g_S\to Q$ with associated $G$-invariant functions $J_i$ (defined as in \eqref{Eq:DefJi}). Hence, $\mathcal{J}$ is a $G$-invariant horizontal gauge momentum if and only if the corresponding coordinate functions $f_i$ in \eqref{Eq:DefJi} are $G$-invariant as well.

\section{A momentum equation} \label{S:MainResult}

\subsection{An intrinsic momentum equation}\label{Ss:MomEq}

In order to achieve our goal of giving a precise estimate of the number of (functionally independent) horizontal gauge momenta of a nonholonomic system, we write a {\it momentum equation}. 
Let $(\cM,\Omega_\subM|_\C,H_\subM)$ be a nonholonomic system with a 
$G$-symmetry satisfying Conditions $\cA$.  
First, we consider a decomposition  (or a principal connection)
\begin{equation}\label{Eq:TQ=H+V}
TQ= H \oplus V \qquad \mbox{so that} \qquad H\subset D.
\end{equation}
We denote by $A: T\M \to \g$ the connection 1-form  such that $\textup{Ker}A = H$. 
Since the vertical space $V$ is also decomposed as $V = S \oplus W$, 
the connection  $A$ can be written as $A = A_S + A_W$, where, for each 
$X\in TQ$, $A_W : TQ\to \g$ is given by 
$$
A_W(X) = \eta \qquad \mbox{if and only if} \qquad \eta_\subQ = P_W(X),
$$
and  $A_S : TQ\to \g$ is given by 
\begin{equation}\label{Eq:DefAs}
A_S(X) = \xi \qquad \mbox{if and only if} \qquad \xi_\subQ = P_S(X),
\end{equation}
where $P_W :TQ \to W$ and $P_S :TQ \to S$ are the corresponding projections associated to decomposition 
\begin{equation}\label{Eq:TQ=H+S+W} 
TQ = H \oplus S \oplus W. 
\end{equation}

Second, we see that each map $A_S$ and $A_W$ defines a corresponding 2-form on 
$Q$ in the following way (see \cite{balseiro2014}): on the one hand, the {\it $W$-curvature}
on $Q$ is a $\g$-valued 2-form defined, for each $X, Y\in TQ$, as
\[
 K_W(X,Y) = d^DA_W (X,Y) = dA_W (P_D(X), P_D(Y)) = -A_W ([P_D(X), P_D(Y)]),
\]
with $P_D: TQ = D\oplus W \to D$ the projection to the first factor.  On the other hand, 
after the choice of a global 
basis $\mathfrak{B}_{\g_S} = \{\xi_1,...,\xi_k\}$ 
of $\g_S\to Q$,  the $\g$-valued 1-form $A_S$ on $\M$  can be written as 
\[
A_S = {Y}^i \otimes \xi_i,
\]
where ${Y}^i$ are 1-forms on $Q$ such that  ${Y}^i|_{H} = {Y}^i|_{W}=0$ and 
${Y}^i((\xi_j)_\subQ)=\delta_{ij}$  for all $i=1,...,k$ (recall that the sum over 
repeated indexes is understood). Then the corresponding $\g$-valued 2-form 
is given, for each $X,Y\in TQ$, by 
\[
  (d^DY^i)\otimes \xi^i (X,Y) =  dY^i(P_D(X), P_D(Y)) \otimes \xi^i.
\]

Recalling that $\tau_\subM:\M\to Q$ is the canonical projection, we define the $\g$-valued 2-forms $\bar{\sigma}_{\g_S}$ and $\sigma_{\g_S}$ on $Q$ and $\M$ respectively, by 
\begin{equation}\label{Def:sigma}
\begin{split}
\bar{\sigma}_{\g_S} & : = K_W + d^D Y^i\otimes \xi_i,\\ 
\sigma_{\g_S} & := \tau_\subM^* \bar{\sigma}_{\g_S}\,. 
\end{split}
\end{equation}
Equivalently, $\sigma_{\g_S}$ is given by $\sigma_{\g_S} = \mathcal{K}_\subW + d^\C {\mathcal Y}^i\otimes \xi_i$, where $\mathcal{K}_\subW = \tau_\subM^*K_W$, $\mathcal{Y}^i = \tau_\subM^*Y^i$ and $d^\C \mathcal{Y}^i ({\mathcal X},\mathcal{Y}) = d \mathcal{Y}^i (P_\C(\mathcal{X}), P_\C(\mathcal{Y}))$ for $\mathcal{X}, \mathcal{Y} \in T\M$, and $P_\C:T\M\to \C$ the projection associated to decomposition \eqref{Eq:TM=C+W}.

\begin{definition}\label{Def:Jsigma}
Consider a nonholonomic system $(\cM,\Omega_\subM|_\C,H_\subM)$ with a $G$-symmetry
satisfying Conditions $\cA$ and denote by $\mathfrak{B}_{\g_S} = \{\xi_1,...,\xi_k\}$ 
a global 
basis of $\Gamma(\g_S)$. The 2-form $\langle J, \sigma_{\g_S} \rangle$ on $\M$ is defined by
\begin{equation*}
 \begin{split}
\langle J, \sigma_{\g_S} \rangle := & \ \langle J, \mathcal{K}_\subW \rangle + \langle J ,  d^\C \mathcal{Y}^i \otimes \xi^i\rangle, \\
:= & \ \langle J, \mathcal{K}_\subW \rangle + J_i \, d^\C \mathcal{Y}^i,
\end{split}
\end{equation*}
where $J:\M\to \g^*$ is the canonical momentum map restricted to $\M$ and 
$\langle \cdot, \cdot\rangle$ denotes the pairing between $\g^*$ and $\g$.
\end{definition}

The 2-form $\langle J, \sigma_{\g_S} \rangle$ already appeared in \cite{BalYapu19} for a specific choice of the basis $\mathfrak{B}_{\g_S}$ (see Sec.~\ref{Ss:Hamiltonization}).

\begin{lemma}\label{L:sigma}
 Assume that Conditions $\cA$ are satisfied, then
 \begin{enumerate}
  \item[$(i)$] The $\g$-valued 2-forms $\bar{\sigma}_{\g_S}$ and  $\sigma_{\g_S}$ depend on the chosen basis $\mathfrak{B}_{\g_S}$. 
  \item[$(ii)$] If the basis $\mathfrak{B}_{\g_S}$ is $G$-invariant, then the 2-form  $\langle J, \sigma_{\g_S} \rangle$ is $G$-invariant as well.   
 \end{enumerate}
\end{lemma}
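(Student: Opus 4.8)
The plan is to prove the two items separately, tracking carefully how each object depends on the choices made.

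\textbf{Item $(i)$.} I would start from the definition $\bar\sigma_{\g_S} = K_W + d^D Y^i \otimes \xi_i$. The first summand $K_W = d^D A_W$ depends only on the principal connection \eqref{Eq:TQ=H+V} (equivalently on the splitting $TQ = H\oplus S\oplus W$), \emph{not} on the basis $\mathfrak{B}_{\g_S}$, so all basis-dependence is concentrated in the term $d^D Y^i\otimes\xi_i$, which is precisely the expression of the $\g$-valued 1-form $A_S = Y^i\otimes\xi_i$ and its ``$d^D$''. Although $A_S$ itself is intrinsic (it is the $S$-component of the connection, $A_S(X)=\xi$ iff $\xi_Q = P_S(X)$), the naive writing $d^D Y^i\otimes\xi_i$ is \emph{not} the covariant differential of $A_S$: one is omitting the term $Y^i\otimes d^D\xi_i$ (more precisely the contribution coming from the non-constancy of the frame $\{\xi_i\}$ along $Q$). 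Concretely, if $\{\tilde\xi_j\}$ is another global basis with $\tilde\xi_j = g_{ij}\xi_i$ for functions $g_{ij}\in C^\infty(Q)$ (invertible matrix), then the dual coframe transforms as $\tilde Y^i = (g^{-1})_{ij} Y^j$, and
\[
 d^D \tilde Y^i \otimes \tilde\xi_i = d^D\!\big((g^{-1})_{ij}Y^j\big)\otimes (g_{il}\xi_l) = d^D Y^j\otimes \xi_j + g_{il}\,d(g^{-1})_{ij}\wedge Y^j \otimes \xi_l ,
\]
so the difference $\bar\sigma_{\g_S}^{\tilde{\mathfrak B}} - \bar\sigma_{\g_S}^{\mathfrak B} = g_{il}\,d(g^{-1})_{ij}\wedge Y^j\otimes\xi_l$ is generically nonzero. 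To establish $(i)$ it is enough to exhibit one example (or invoke the nonholonomic oscillator / one of the examples in Section~\ref{S:Examples}) where this term does not vanish; alternatively simply point out that the term vanishes identically for all bases only if the $Y^j$ themselves were closed along $D$, which is not the case in general. The assertion for $\sigma_{\g_S} = \tau_\subM^*\bar\sigma_{\g_S}$ then follows since $\tau_\subM$ is a submersion, hence $\tau_\subM^*$ is injective on forms.

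\textbf{Item $(ii)$.} Here I would use Lemma~\ref{L:invariance}. Assume $\mathfrak{B}_{\g_S} = \{\xi_1,\dots,\xi_k\}$ is $G$-invariant, i.e.\ $[\xi_i,\eta]=0$ for all $\eta\in\g$. From Definition~\ref{Def:Jsigma}, $\langle J,\sigma_{\g_S}\rangle = \langle J,\mathcal{K}_\subW\rangle + J_i\, d^\C\mathcal{Y}^i$, so it suffices to show each piece is $G$-invariant, equivalently $\mathcal{L}_{\eta_\subM}\langle J,\sigma_{\g_S}\rangle = 0$ for all $\eta\in\g$. For the term $J_i\,d^\C\mathcal{Y}^i$: by Lemma~\ref{L:invariance}$(ii)$ each $J_i = {\bf i}_{(\xi_i)_\subM}\Theta_\subM$ is $G$-invariant because $\xi_i$ is; and $\mathcal{Y}^i = \tau_\subM^*Y^i$ where $Y^i$ is the coframe dual to the $G$-invariant frame $\{(\xi_j)_\subQ\}$ in the $G$-invariant splitting $TQ = H\oplus S\oplus W$ (the splitting is $G$-invariant since $W$ and $H\subset D$ can be — and are assumed to be — chosen $G$-invariantly), so each $Y^i$ is a $G$-invariant 1-form on $Q$, hence $\mathcal{Y}^i$ is $G$-invariant on $\M$; since $P_\C$ also commutes with the ($G$-invariant) action, $d^\C\mathcal{Y}^i$ is $G$-invariant, and the product $J_i\,d^\C\mathcal{Y}^i$ is $G$-invariant. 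For the term $\langle J,\mathcal{K}_\subW\rangle$: $\mathcal{K}_\subW = \tau_\subM^* K_W$ with $K_W = d^D A_W$; $A_W$ is $G$-invariant because $W$ and $D$ are $G$-invariant, so $K_W$ is a $G$-invariant $\g$-valued 2-form, and then $\langle J, K_W\rangle$ pulled back to $\M$ is $G$-invariant provided the canonical momentum map $J:\M\to\g^*$ is equivariant — which it is, being the restriction of the canonical cotangent-lift momentum map, which is $\mathrm{Ad}^*$-equivariant, so the pairing $\langle J(\Psi_g m), \mathrm{Ad}_g \cdot\rangle$ with the $\mathrm{Ad}$-invariant (by $G$-invariance of $K_W$) form reproduces $\langle J(m),\cdot\rangle$. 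Combining, $\langle J,\sigma_{\g_S}\rangle$ is $G$-invariant.

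\textbf{Main obstacle.} The delicate point is item $(ii)$: one must be careful that ``$\langle J, K_W\rangle$ is $G$-invariant'' is not automatic from $K_W$ being $G$-invariant alone — it genuinely uses equivariance of the momentum map together with $\mathrm{Ad}$-invariance of the value of $K_W$, and I would want to spell out that the combination $\langle \mathrm{Ad}^*_g J, \mathrm{Ad}_g\,\xi\rangle = \langle J,\xi\rangle$ is what makes the contraction invariant. A secondary subtlety is making sure the frame $\{(\xi_i)_\subQ\}$ being $G$-invariant really forces its dual coframe $\{Y^i\}$ (extended by zero on $H\oplus W$) to be $G$-invariant; this follows because both $S$ and its complement $H\oplus W$ are $G$-invariant distributions, so the projection $P_S$ is equivariant and $Y^i = (\text{$i$-th coordinate of }P_S(\cdot)\text{ in the frame }(\xi_j)_\subQ)$ is $G$-invariant. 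I expect item $(i)$ to be comparatively routine once the transformation law for $d^D Y^i\otimes\xi_i$ under a change of basis is written down.
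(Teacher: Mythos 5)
Your proof is correct, but it is worth noting that the paper itself essentially does not prove this lemma: item $(i)$ is dismissed as ``straightforward to see'' and item $(ii)$ is delegated entirely to \cite[Lemma 3.8]{BalYapu19}. What you have written is therefore a genuine filling-in of details the paper leaves implicit rather than a different route. For item $(i)$ your change-of-basis computation is the right way to make ``straightforward'' precise: the term $K_W$ is basis-independent, all the dependence sits in $d^D Y^i\otimes\xi_i$, and the extra term $g\,d(g^{-1})\wedge Y\otimes\xi$ is generically nonzero (watch the index placement: with $\tilde\xi_j=g_{ij}\xi_i$ one has $\tilde\xi_i=g_{li}\xi_l$, not $g_{il}\xi_l$ as written; the cancellation $(g^{-1})_{ij}g_{li}=\delta_{lj}$ still comes out right, but the transposition should be fixed for the formula to be literally correct). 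You are also right that a fully rigorous proof of $(i)$ requires exhibiting one basis change where that term survives, which neither you nor the paper actually does. For item $(ii)$ your decomposition into $\langle J,\mathcal{K}_\subW\rangle$ and $J_i\,d^\C\mathcal{Y}^i$, with invariance of the second piece coming from Lemma~\ref{L:invariance}$(ii)$ plus equivariance of $P_\subS$ and $P_\C$, is exactly the content of the cited external lemma. One point you correctly flag but should state cleanly: for the $\g$-valued form $K_W$, ``$G$-invariant'' must be read as $\mathrm{Ad}$-equivariance, $\Psi_g^*K_W=\mathrm{Ad}_{g^{-1}}\circ K_W$, which holds because the paper takes $\g_W$ to be an $\mathrm{Ad}$-invariant subbundle; only then does pairing with the $\mathrm{Ad}^*$-equivariant momentum map produce an invariant scalar-valued $2$-form. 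With these two small tightenings your argument is complete and self-contained, which the paper's is not.
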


\begin{proof}
 It is straightforward to see that the $\g$-valued 2-forms $\bar{\sigma}_{\g_S}$ and  $\sigma_{\g_S}$ depend directly on the chosen basis $\mathfrak{B}_{\g_S}$. Item $(ii)$ is proven in \cite[Lemma 3.8]{BalYapu19}. 
 
\end{proof}

\begin{proposition}\label{Prop:MomEq1}\textup{(Momentum equation)} Let us consider a nonholonomic 
system $(\cM,\Omega_\subM|_\C,H_\subM)$ with a $G$-symmetry satisfying Conditions $\cA$, and let 
$\mathfrak{B}_{\g_S} = \{\xi_1,...\xi_k\}$ be a (global) basis of $\Gamma(\g_S)$ with associated momenta $J_1,...,J_k$ as in \eqref{Eq:J_i}.  
The function ${\mathcal J}=f_iJ_i$, for $f_i\in C^\infty(Q)$, is a horizontal 
gauge momentum if and only if the coordinate functions $f_i$ satisfy the momentum equation
\begin{equation}\label{Eq:MomEq}
f_i \langle J, \sigma_{\g_S}\rangle ({\mathcal Y}_i, X_{\emph\nh}) + J_i X_{\emph\nh}(f_i) = 0,
\end{equation}
where ${\mathcal Y}_i := (\xi_i)_\subM$. 
\end{proposition}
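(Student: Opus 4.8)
The plan is to characterize the condition $X_\nh(\mathcal{J}) = 0$ directly in terms of the geometry, starting from the defining equation \eqref{Eq:NHDyn} of the nonholonomic vector field and the expression $\mathcal{J} = f_i J_i = f_i \, {\bf i}_{(\xi_i)_\subM}\Theta_\subM$. First I would compute $X_\nh(\mathcal{J}) = \mathbf{i}_{X_\nh} d\mathcal{J}$ using the Leibniz rule: $d\mathcal{J} = f_i \, dJ_i + J_i \, df_i$, so that $X_\nh(\mathcal{J}) = f_i \, X_\nh(J_i) + J_i \, X_\nh(f_i)$. The term $X_\nh(f_i)$ is already in the desired form (note $f_i \in C^\infty(Q)$, so $X_\nh(f_i)$ only sees the base part of $X_\nh$). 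The crux is therefore to show that $X_\nh(J_i) = \langle J, \sigma_{\g_S}\rangle(\mathcal{Y}_i, X_\nh)$, i.e. that the "non-conservation" of the individual momentum $J_i$ along the nonholonomic flow is measured precisely by the 2-form $\langle J, \sigma_{\g_S}\rangle$ evaluated on $\mathcal{Y}_i = (\xi_i)_\subM$ and $X_\nh$.

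To establish this identity I would proceed as follows. Using the Cartan formula and the fact that $J_i = {\bf i}_{\mathcal{Y}_i}\Theta_\subM$, write
\[
X_\nh(J_i) = \mathbf{i}_{X_\nh} d\,{\bf i}_{\mathcal{Y}_i}\Theta_\subM = \mathbf{i}_{X_\nh}\bigl(\mathcal{L}_{\mathcal{Y}_i}\Theta_\subM - {\bf i}_{\mathcal{Y}_i} d\Theta_\subM\bigr) = \mathbf{i}_{X_\nh}\mathcal{L}_{\mathcal{Y}_i}\Theta_\subM + \mathbf{i}_{X_\nh}{\bf i}_{\mathcal{Y}_i}\Omega_\subM .
\]
Since $\mathcal{Y}_i = (\xi_i)_\subM$ with $\xi_i \in \Gamma(\g_S)$ is (by Lemma~\ref{L:invariance}) a $G$-invariant section and hence an infinitesimal generator for the lifted cotangent action, $\mathcal{L}_{\mathcal{Y}_i}\Theta_\subM$ is controlled: for a genuine Lie-algebra element $\mathcal{L}_{\eta_\subM}\Theta_\subM = 0$, but $\xi_i$ is only a section, so $\mathcal{L}_{\mathcal{Y}_i}\Theta_\subM = d^{\,?}$-type correction terms involving $d\xi_i$ along $Q$ — these are exactly the terms that will organize into the $d^\C\mathcal{Y}^i$ contributions. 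For the second term, $\mathbf{i}_{X_\nh}{\bf i}_{\mathcal{Y}_i}\Omega_\subM = -\,\Omega_\subM(X_\nh, \mathcal{Y}_i)$; now I would split $\mathcal{Y}_i$ according to $T\M = \C \oplus \W$ and use \eqref{Eq:NHDyn}, i.e. $\mathbf{i}_{X_\nh}\Omega_\subM|_\C = dH_\subM|_\C$, together with $G$-invariance of $H_\subM$ (so $\mathcal{Y}_i^{\,\C}(H_\subM) = \mathcal{S}$-part vanishes since $\mathcal{Y}_i \in \mathcal{V}$ and $H_\subM$ is $G$-invariant). The mismatch between $\mathcal{Y}_i$ and its $\C$-component is precisely a $\W$-valued correction, and evaluating $\Omega_\subM$ on this correction produces the $W$-curvature term $\mathcal{K}_\subW$. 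Assembling the Lie-derivative correction terms (giving $J_i\, d^\C\mathcal{Y}^i$ paired appropriately) and the curvature term (giving $\langle J, \mathcal{K}_\subW\rangle$) yields $X_\nh(J_i) = \langle J, \sigma_{\g_S}\rangle(\mathcal{Y}_i, X_\nh)$, after recognizing the definition of $\langle J, \sigma_{\g_S}\rangle$ in Definition~\ref{Def:Jsigma}.

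Alternatively — and this may be the cleaner route — I would invoke the known "momentum equation" from \cite{BKMM,balseiro2014,BalYapu19} in the form $X_\nh(\langle J^\nh, \xi\rangle) = \langle J, \mathcal{K}_\subW\rangle(\xi_\subM, X_\nh) + (\text{terms from }\mathcal{L}_{\xi_\subM})$ valid for any section $\xi \in \Gamma(\g_S)$, specialize it to $\xi = \xi_i$ and to $\xi = f_i\xi_i = \zeta$, and compare. Applying the general momentum equation to $\zeta = f_i\xi_i$: by $\R$-linearity of $\xi \mapsto J_\xi$ over $C^\infty(Q)$ and the Leibniz behavior of the curvature-type terms under multiplication by $f_i \in C^\infty(Q)$ (here one uses that $d^\C(f_i \mathcal{Y}^i) = f_i\, d^\C\mathcal{Y}^i + df_i \wedge \mathcal{Y}^i$, and that $\mathcal{Y}^i(P_\C(X_\nh))$ relates to $X_\nh(f_i)$-producing terms), the extra terms collapse to $J_i\, X_\nh(f_i)$. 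Then $X_\nh(\mathcal{J}) = X_\nh(\langle J^\nh, \zeta\rangle) = f_i\langle J, \sigma_{\g_S}\rangle(\mathcal{Y}_i, X_\nh) + J_i X_\nh(f_i)$, and $\mathcal{J}$ is a first integral iff the right-hand side vanishes, which is \eqref{Eq:MomEq}.

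\textbf{Main obstacle.} The delicate point is bookkeeping the $\mathcal{L}_{\mathcal{Y}_i}\Theta_\subM$ term for a \emph{section} rather than a fixed Lie-algebra element, and verifying that the correction it produces is exactly $J_i\, d^\C\mathcal{Y}^i$ rather than something involving the full $d\mathcal{Y}^i$ (one needs the $P_\C$-projection to appear, which forces using the constraint $X_\nh \in \C$ and the structure of $\W$). A secondary subtlety is checking the $C^\infty(Q)$-Leibniz step carefully: one must confirm that the "cross" term $df_i \wedge \mathcal{Y}^i$ evaluated on $(\mathcal{Y}_j, X_\nh)$, summed against $J$, reproduces exactly $J_i\, X_\nh(f_i)$ and not an extra term — this uses $\mathcal{Y}^i(\mathcal{Y}_j) = \delta_{ij}$ and $\mathcal{Y}^i(X_\nh) = 0$ would be false, so instead $df_i(\mathcal{Y}_j) = \mathcal{Y}_j(f_i) = 0$ since $f_i$ is $G$-invariant and pulled back from $Q$, while $df_i(X_\nh) = X_\nh(f_i)$ and $\mathcal{Y}^i(\mathcal{Y}_j) = \delta_{ij}$ — so the surviving term is $\sum_i J_i\, X_\nh(f_i)$ as claimed.
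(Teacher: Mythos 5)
Your overall skeleton coincides with the paper's: write $X_\nh(\mathcal{J}) = f_i\,X_\nh(J_i) + J_i\,X_\nh(f_i)$ and reduce everything to the single identity $X_\nh(J_i)=\langle J,\sigma_{\g_S}\rangle(\mathcal{Y}_i,X_\nh)$. The paper packages this as $(\Omega_\subM+\langle J,\sigma_{\g_S}\rangle)(\mathcal{Y}_i,\mathcal{X})=dJ_i(\mathcal{X})$ for any $\rho$-projectable $\mathcal{X}$, and then kills $\Omega_\subM(\mathcal{Y}_i,X_\nh)=-dH_\subM(\mathcal{Y}_i)=0$ using \eqref{Eq:NHDyn} and the $G$-invariance of $H_\subM$; your closing bookkeeping of the Leibniz cross-term is also fine. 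The gap is in the mechanism you propose for the key identity itself.

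Concretely: you plan to ``split $\mathcal{Y}_i$ according to $T\M=\C\oplus\W$'' and claim that the mismatch between $\mathcal{Y}_i$ and its $\C$-component is a $\W$-valued correction whose pairing with $\Omega_\subM$ produces the curvature term $\mathcal{K}_\subW$. But $\mathcal{Y}_i=(\xi_i)_\subM$ with $\xi_i\in\Gamma(\g_S)$ lies in $\S=\C\cap\V\subset\C$ by construction, so its $\W$-component is zero; the term $\Omega_\subM(\mathcal{Y}_i,X_\nh)$ vanishes outright and leaves no curvature residue. The curvature term has a different origin: it sits inside $\sigma_{\g_S}=\mathcal{K}_\subW+d^\C\mathcal{Y}^i\otimes\xi_i$ by definition, and the whole point of the paper's computation is that, because $[\mathcal{Y}_i,\mathcal{X}]\in\Gamma(\V)$ for projectable $\mathcal{X}$ (Lemma~\ref{L:invariance}$(iii)$) and $A=A_W+A_S$ is the identity on vertical vectors, one gets $\sigma_{\g_S}(\mathcal{Y}_i,\mathcal{X})=-\tau_\subM^*A([\mathcal{Y}_i,\mathcal{X}])$ and hence $\langle J,\sigma_{\g_S}\rangle(\mathcal{Y}_i,\mathcal{X})=-{\bf i}_{[\mathcal{Y}_i,\mathcal{X}]}\Theta_\subM$; i.e.\ both the $\mathcal{K}_\subW$ and the $J_i\,d^\C\mathcal{Y}^i$ contributions are read off from the single vertical vector $[\mathcal{Y}_i,X_\nh]$. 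This is exactly the step you flag as the ``main obstacle'' and leave unresolved, and the $\C\oplus\W$-splitting you put in its place cannot supply it. Once that identity is in hand, your Cartan-formula expansion $dJ_i=\mathcal{L}_{\mathcal{Y}_i}\Theta_\subM+{\bf i}_{\mathcal{Y}_i}\Omega_\subM$ together with $\mathcal{Y}_i(\Theta_\subM(X_\nh))=0$ (by $G$-invariance) does close the argument and is equivalent to the paper's expansion of $-d\Theta_\subM(\mathcal{Y}_i,\mathcal{X})$; your alternative second route, which appeals to a momentum equation from the literature, is not self-contained enough to assess.
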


\begin{proof}
First, from Lemma \ref{L:invariance} observe that if $\mathcal{X}$ is a vector field on $\M$ that is $T\rho$-projectable, then $[\mathcal{Y}_i, \mathcal{X}]\in \Gamma(\V)$ for $i=1,...,k$.  Thus, using \eqref{Def:sigma}, 
\begin{equation*}
 \begin{split}
\sigma_{\g_S} (\mathcal{Y}_i, \mathcal{X}) & = [d^\C \tau_\subM^* A_W  +   d^\C \tau_\subM^* Y^j \otimes \xi_j ] ( \mathcal{Y}_i,  \mathcal{X})  = - \tau_\subM^* A_W ([\mathcal{Y}_i,  \mathcal{X}]) - \tau_\subM^*Y^j([\mathcal{Y}_i,  \mathcal{X}]) \otimes \xi_j \\
& = -\tau_\subM^* A([\mathcal{Y}_i,  \mathcal{X}]).
 \end{split}
\end{equation*}
Second, by the definition of the canonical momentum map $J:\M\to \g^*$, we get that 
$$ 
\langle J, \sigma_{\g_S} \rangle (\mathcal{Y}_i, \mathcal{X}) = - \langle J, \tau_\subM^* A ([\mathcal{Y}_i, \mathcal{X}]) \rangle = - {\bf i}_{[\mathcal{Y}_i, \mathcal{X}]} \Theta_\subM.
$$  
Then, recalling that  $\Omega_\subM = -d\Theta_\subM$ and using that $\Theta_\subM(\mathcal{X})$ is an invariant function,  we observe that
\begin{equation*}
 \begin{split}
(\Omega_\subM + \langle J, \sigma_{\g_S}\rangle )(\mathcal{Y}_i, \mathcal{X}) & = -{\mathcal Y}_i( \Theta_\subM(\mathcal{X})) + \mathcal{X}(J_i) + \Theta_\subM([\mathcal{Y}_i, \mathcal{X}]) - {\bf i}_{[\mathcal{Y}_i, \mathcal{X}]} \Theta_\subM  = dJ_i (\mathcal{X}).
 \end{split}
\end{equation*}
Now, $\mathcal{J}=f_iJ_i$ is a first integral of $X_\nh$ if and only if $0 = d\mathcal{J}(X_\nh) = f_idJ_i(X_\nh) + J_i X_\nh(f_i)$ which is equivalent, for $X_\nh = \mathcal{X}$, to
$$
0 = f_i (\Omega_\subM + \langle J, \sigma_{\g_S}\rangle) (\mathcal{Y}_i, X_\nh) + J_i X_\nh(f_i) = -f_i dH_\subM(\mathcal{Y}_i) + f_i\langle J, \sigma_{\g_S}\rangle (\mathcal{Y}_i, X_\nh) + J_i X_\nh(f_i).
$$
Using the $G$-invariance of the hamiltonian function $H_\subM$ we get \eqref{Eq:MomEq}.
 
 \end{proof}

\begin{remark} \label{R:MomEq2}
From the proof of Proposition~\ref{Prop:MomEq1}, we observe that the {\it momentum equation} 
can be equivalently written  as $0 =  f_i  \Theta_\subM([(\xi_i)_\subM, X_{\nh}]) - J_iX_{\nh}(f_i)$.
\end{remark}

In the light of Proposition~\ref{Prop:MomEq1} (or more precisely Remark~\ref{R:MomEq2}), 
we recover the well-known 
result that horizontal symmetries generate first integrals \cite{BS93,BKMM}. 
Recall that a {\it horizontal symmetry} is an element $\eta\in\g$ such that $\eta_Q\in \Gamma(D)$
(see e.g. \cite{Bloch}).
\begin{corollary}[Horizontal symmetries] \label{C:HorSym1}
Let $(\M, \Omega_\subM|_\C, H_\subM)$ be a nonholonomic system with a $G$-symmetry 
satisfying Conditions $\cA$.  If the bundle $\g_S\to Q$ admits a horizontal symmetry $\eta$, then the function $\langle J, \eta\rangle$ is a horizontal gauge momentum for the nonholonomic system. Hence if there is global basis of horizontal symmetries of $\g_S$, then  the nonholonomic 
system admits $k = \textup{rank}\, (\g_S)$ horizontal gauge momenta.
\end{corollary}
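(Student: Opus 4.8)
The plan is to read off a horizontal symmetry as a \emph{constant} section of $\g_S\to Q$ and feed it into the momentum equation. First, if $\eta\in\g$ is a horizontal symmetry, i.e.\ $\eta_Q\in\Gamma(D)$, then since $\eta_Q\in\Gamma(V)$ automatically we have $\eta_Q\in\Gamma(D\cap V)=\Gamma(S)$, so $\eta(q)\in(\g_S)_q$ for every $q$ and the constant map $q\mapsto\eta$ is a global section of $\g_S\to Q$; thus $\mathcal{J}_\eta={\bf i}_{\eta_\subM}\Theta_\subM=\langle J,\eta\rangle$ is a legitimate candidate horizontal gauge momentum in the sense of Definition~\ref{Def:HGM}.

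Next I would invoke the momentum equation in the form of Remark~\ref{R:MomEq2}. For a general section $\zeta=f_i\xi_i\in\Gamma(\g_S)$, a short computation using $[f_i(\xi_i)_\subM,X_\nh]=f_i[(\xi_i)_\subM,X_\nh]-X_\nh(f_i)(\xi_i)_\subM$ together with $\Theta_\subM((\xi_i)_\subM)=J_i$ shows that the right-hand side $f_i\Theta_\subM([(\xi_i)_\subM,X_\nh])-J_iX_\nh(f_i)$ of that momentum equation equals $\Theta_\subM([\zeta_\subM,X_\nh])$; hence $\mathcal{J}_\zeta$ is a horizontal gauge momentum if and only if $\Theta_\subM([\zeta_\subM,X_\nh])=0$. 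For $\zeta=\eta$ constant this reduces to $\Theta_\subM([\eta_\subM,X_\nh])=0$, which holds trivially: $\eta_\subM$ is an infinitesimal generator of the $G$-action and $X_\nh$ is $G$-invariant (it reduces to $\M/G$, see \eqref{Eq:RedDyn}), so $[\eta_\subM,X_\nh]=0$. Therefore $\langle J,\eta\rangle$ is a horizontal gauge momentum.

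It is worth recording the basis-free/classical version \cite{BS93,BKMM}, which is really the content: from $\eta_Q\in\Gamma(D)$ and \eqref{Def:C} one gets $\eta_\subM\in\Gamma(\C)$; since the cotangent lift preserves the Liouville form, $\mathcal{L}_{\eta_\subM}\Theta_\subM=0$, so Cartan's formula yields $d({\bf i}_{\eta_\subM}\Theta_\subM)={\bf i}_{\eta_\subM}\Omega_\subM$; evaluating on $X_\nh$ and using $\eta_\subM\in\Gamma(\C)$ together with \eqref{Eq:NHDyn} and the $G$-invariance of $H_\subM$ gives $X_\nh(\langle J,\eta\rangle)=\Omega_\subM(\eta_\subM,X_\nh)=-dH_\subM(\eta_\subM)=0$.

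For the counting assertion: if $\{\eta_1,\dots,\eta_k\}\subset\g$ is a global basis of $\g_S$ made of horizontal symmetries, then each $\langle J,\eta_j\rangle$ is a horizontal gauge momentum by the above; since the $\eta_j$ are linearly independent and the $\langle J,\eta_j\rangle$ are fibrewise linear, one checks they are functionally independent, so the upper bound $k=\textup{rank}(\g_S)$ is attained. There is no genuine obstacle here: the whole point is that for an honest Lie-algebra element $\eta$ (rather than a nontrivial horizontal gauge symmetry) the bracket $[\eta_\subM,X_\nh]$ vanishes identically by invariance of the dynamics, collapsing the momentum equation to a tautology; the only mild points to keep straight are the standard invariance facts $\mathcal{L}_{\eta_\subM}\Theta_\subM=0$ and $\eta_\subM(H_\subM)=0$, and the verification that $\eta_\subM$ takes values in $\C$.
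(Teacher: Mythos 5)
Your proposal is correct and follows essentially the same route as the paper: both reduce the claim to the momentum equation in the form of Remark~\ref{R:MomEq2} and observe that for a constant section $\eta$ the equation collapses because $[\eta_\subM, X_{\nh}]=0$ by $G$-invariance of the nonholonomic vector field (the paper completes $\eta$ to a basis of $\Gamma(\g_S)$ and exhibits $f=(1,0,\dots,0)$ as a solution, which is the same observation). Your supplementary Cartan-formula computation and the explicit check of functional independence are consistent additions but do not change the argument.
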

\begin{proof}
If $\eta_1$ is a horizontal symmetry, then let $\mathfrak{B}_{\g_S} = \{\eta_1,\xi_2,...,\xi_k\}$ a basis of $\Gamma(\g_S)$. A section $\zeta =f_1\eta_1 + f_i \xi_i$ is a horizontal gauge symmetry if 
$J_1 X_\nh(f_1) + f_i  \Theta_\subM([ X_{\nh}, (\xi_i)_\subM]) + J_iX_{\nh}(f_i) = 0$, since $[X_\nh , \eta_1]=0$.  Then we see that $f_1 = 1$ and $f_i=0$ for $i=2,...,k$ is a solution of the momentum equation and hence $\eta_1$ is a horizontal gauge symmetry. 
As a consequence, if the bundle $\g_S\to Q$ admits a basis of horizontal symmetries, then the nonholonomic admits $k$ horizontal gauge momenta. 
\end{proof}

 A set of solutions $(f_1,...,f_k)$ of the momentum equation \eqref{Eq:MomEq}  may depend on $\M$ and not only on 
 $Q$.  Based on the fact that the equation \eqref{Eq:MomEq} is quadratic in the fibers,  we show next that it is equivalent to a 
 system of partial differential equations for the functions $f_i$ on the manifold $Q$.  

 \subsection{The ``strong invariance'' condition on the kinetic energy}
 
We now introduce and study an invariance property, called strong invariance, 
that involves the kinetic energy, the constraints and the G-symmetry.
This condition is crucial to state our main result in Theorem \ref{T:Main}.  
 
\begin{definition} 
Consider a Riemannian metric $\kappa$ on a manifold $Q$ and a distribution 
$S\subset TQ$ on $Q$. 
The metric $\kappa$ is called {\it strong invariant on $S$} (or $S$-{\it strong invariant}) if for all $G$-invariant sections
$Y_1, Y_2, Y_3 \in \Gamma(S)$, holds that
$$
\kappa(Y_1,[Y_2, Y_3]) = - \kappa(Y_3,[Y_2, Y_1]).
$$
\end{definition}

First we observe that, for a Riemannian metric $\kappa$, being $G$-invariant is weaker than being strong invariant on the whole tangent bundle as the following example shows:

\begin{example}
{\bf The case $Q =G$ with a strong invariant metric on $TG$.}   Consider a Lie group $G$ acting on itself with the left action and let $\kappa_G$ be a Riemannian metric on it. In this case, the metric being $G$-invariant is equivalent to being left invariant, while being strong invariant on $TG$ is equivalent to being bi-invariant.  
 In fact,  if the metric is strong invariant on $TG$ then $\kappa_G([Y_i,Y_j],Y_l) = - \kappa_G(Y_j,[Y_i,Y_l])$ for all $Y_i\in \mathfrak{X}(G)$ such that $[Y_i, \eta^R]=0$ for all $\eta\in \g$ and $\eta^R$ the corresponding right-invariant vector field on $G$ (we are using that the infinitesimal generator associated to the left action is the corresponding right invariant vector field on $G$). Then, the inner product $\langle \cdot, \cdot \rangle$ on $\g$ defined by 
 $$
 \langle \eta_1, \eta_2\rangle = \kappa_G(\eta_1^L, \eta_2^L)(e), \qquad \mbox{for } \eta_i\in \g,
 $$
 is $ad$-invariant and hence the metric $\kappa_G$ turns out to be bi-invariant on $G$. 
\end{example}


\begin{example}\label{Ex:StrongV}{\bf A nonholonomic system with a strong invariant kinetic energy on the vertical distribution $V$}.  Consider a nonholonomic system 
 $(\cM,\Omega_\subM|_\C,H_\subM)$ with a $G$-symmetry.  If the kinetic energy metric $\kappa$ is strong invariant on $V$ then it induces 
 a bi-invariant metric on the Lie group $G$.
 This case only may occur when the group of symmetries $G$ is compact or a product 
 of a compact Lie group with a vector space. In order to prove this, we first observe that 
 \begin{lemma}\label{L:Ex:StrongV} The kinetic energy metric satisfies
  $\kappa([Y_i,Y_j],Y_l) = - \kappa(Y_j,[Y_i,Y_l])$ for all $Y_i\in \Gamma(V)$ $G$-invariant if and only if $\kappa([(\eta_a)_Q,(\eta_b)_Q],(\eta_c)_Q ) = - \kappa((\eta_b)_Q,[(\eta_a)_Q,(\eta_c)_Q])$ for all $\eta_i\in\g$. 
 \end{lemma}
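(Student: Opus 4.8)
The statement is a bi-implication between a ``test condition on $G$-invariant vector fields in $\Gamma(V)$'' and the same algebraic identity written purely on the Lie algebra $\g$. The natural strategy is to exploit the fact that, because the action is free and proper, a $G$-invariant section $Y$ of $V\to Q$ is nothing but a section of $V$ whose value at every point is an infinitesimal generator, and that $G$-invariance forces the corresponding ``Lie-algebra-valued function'' on $Q$ to actually be locally constant along the orbits in a controlled way. The key structural fact I would use is: if $Y_1,Y_2\in\Gamma(V)$ are $G$-invariant, then there is a relationship between the bracket $[Y_1,Y_2]$ of vector fields and the pointwise Lie algebra bracket. More precisely, I would first recall (or prove) that any $G$-invariant section of $V$ can be written, at least locally and after trivializing, as $Y = \sum_a h_a\, (\eta_a)_Q$ where $\{\eta_a\}$ is a basis of $\g$ and $h_a\in C^\infty(Q)$ are $G$-invariant functions, hence constant on orbits; moreover since $V$ has a global frame $\{(\eta_a)_Q\}$ given by the infinitesimal generators (the action being free), this holds globally with $h_a\in C^\infty(Q)^G$.

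\textbf{Key steps.} First, the direction ($\Leftarrow$): assume the Lie-algebra identity $\kappa([(\eta_a)_Q,(\eta_b)_Q],(\eta_c)_Q) = -\kappa((\eta_b)_Q,[(\eta_a)_Q,(\eta_c)_Q])$ for all $\eta_a,\eta_b,\eta_c\in\g$. Write three $G$-invariant sections $Y_i=h^i_a (\eta_a)_Q$ with $h^i_a$ $G$-invariant. Expand $[Y_i,Y_j]$ using the Leibniz rule for the Lie bracket of vector fields: $[Y_i,Y_j] = h^i_a h^j_b [(\eta_a)_Q,(\eta_b)_Q] + h^i_a\big((\eta_a)_Q(h^j_b)\big)(\eta_b)_Q - h^j_b\big((\eta_b)_Q(h^i_a)\big)(\eta_a)_Q$. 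The crucial observation is that the infinitesimal generators are tangent to the orbits, and the coefficient functions $h^i_a$ are $G$-invariant, hence constant along orbits, so $(\eta_a)_Q(h^j_b)=0$. Therefore $[Y_i,Y_j] = h^i_a h^j_b [(\eta_a)_Q,(\eta_b)_Q]$ pointwise. Substituting into $\kappa([Y_i,Y_j],Y_l)$ and $\kappa(Y_j,[Y_i,Y_l])$ and using bilinearity of $\kappa$ and the assumed Lie-algebra identity term-by-term gives the desired $\kappa([Y_i,Y_j],Y_l) = -\kappa(Y_j,[Y_i,Y_l])$.

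\textbf{The converse.} For ($\Rightarrow$), I would specialize: the infinitesimal generators $(\eta_a)_Q$ are themselves $G$-invariant sections of $V$ (since $[(\eta_a)_Q,(\nu)_Q]=(- [\eta_a,\nu])_Q$... here I must be careful with the sign convention for the infinitesimal generator map, but the point is that $G$-invariance of a section $\xi$ means $[\xi,\eta]=0$ as defined in the paper, and the constant section $\eta_a$ of $Q\times\g$ satisfies this iff $\g$ is abelian — so actually I should instead argue that the specific vector fields $(\eta_a)_Q$ need not be $G$-invariant in that sense). The cleaner route for the converse: take the definition of strong invariance on $V$ with generic $G$-invariant $Y_i$, and use a partition-of-unity / localization argument together with the fact that the identity is $C^\infty(Q)^G$-bilinear in each slot (which follows from the computation above showing the derivative terms drop out) to reduce to the case where each $Y_i$ is a single generator $(\eta_{a_i})_Q$ multiplied by an invariant function; since the identity is $C^\infty(Q)^G$-multilinear and the generators span $V$ fiberwise, evaluating at a point where we can choose the $h$'s to realize any triple of algebra elements yields exactly the pointwise identity $\kappa([(\eta_a)_Q,(\eta_b)_Q],(\eta_c)_Q) = -\kappa((\eta_b)_Q,[(\eta_a)_Q,(\eta_c)_Q])$.

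\textbf{Main obstacle.} The delicate point — and where I expect the real work to be — is making the ``$C^\infty(Q)^G$-multilinearity'' step rigorous: one must verify that in the expression $\kappa([Y_i,Y_j],Y_l)+\kappa(Y_j,[Y_i,Y_l])$ all the terms containing derivatives of the coefficient functions genuinely cancel or vanish. The vanishing of $(\eta_a)_Q(h)$ for $G$-invariant $h$ handles the ``internal'' derivatives, but one should double-check that there is no leftover term of the form $\kappa\big(((\eta_a)_Q(h^j_b))(\eta_b)_Q,\,Y_l\big)$ surviving because of how the two $\kappa$-terms are paired — this is exactly the antisymmetric pairing in the definition of strong invariance, and it is precisely engineered so that these terms cancel between the two summands. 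Once that bookkeeping is done, both directions are immediate. I would therefore present the forward computation of $[Y_i,Y_j]$ carefully as the core lemma, note the vanishing of generator-derivatives of invariant functions, and then state both implications as consequences of $C^\infty(Q)^G$-multilinearity plus fiberwise spanning.
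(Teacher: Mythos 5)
Your forward direction rests on the claim that a $G$-invariant section $Y\in\Gamma(V)$ can be written as $Y=h_a\,(\eta_a)_Q$ with $G$-invariant coefficients $h_a$, so that $(\eta_b)_Q(h_a)=0$ and hence $[Y_i,Y_j]=h^i_ah^j_b\,[(\eta_a)_Q,(\eta_b)_Q]$. This is false unless $\g$ is abelian. With the paper's notion of invariance, $0=[Y,\nu_Q]=h_a[(\eta_a)_Q,\nu_Q]-\nu_Q(h_a)(\eta_a)_Q$ for all $\nu\in\g$, and since $[(\eta_a)_Q,\nu_Q]=\mp([\eta_a,\nu])_Q$ this forces $\nu_Q(h_a)$ to equal the components of $\mp h_b[\eta_b,\nu]$, which is nonzero in general: the coefficients of an invariant section in the generator frame are $Ad$-equivariant, not orbit-constant. (Concretely, for $G$ acting on itself on the left the generators are the right-invariant fields and the invariant sections are the left-invariant ones, whose coefficients in the right-invariant frame are the entries of $Ad_g$; the paper's own sections $\xi_1$ in the snakeboard and body-of-revolution examples likewise have components depending on the group variables.) So the derivative terms you discard do not vanish, and the $C^\infty(Q)^G$-multilinearity on which both of your directions rely fails. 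You in fact notice the symptom of this in your converse — that the constant sections $\eta_a$ are not $G$-invariant — but the same obstruction already breaks the forward direction.

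The paper's proof goes the opposite way, and that is the fix: take a frame $\{Y_1,\dots,Y_n\}$ of $G$-invariant sections of $V$ and expand the generators in it, $(\eta_a)_Q=g_a^jY_j$. The invariance relation $0=[Y_i,\eta_Q]=g^j[Y_i,Y_j]+Y_i(g^j)Y_j$ then converts each derivative term $Y_i(g^j)Y_j$ into $-g^j[Y_i,Y_j]$; the derivative terms are not zero but are reabsorbed into bracket terms, and each side of the asserted identity reduces to $\mp\, g_a^ig_b^jg_c^l$ times the corresponding strong-invariance expression in the $Y$'s. The converse is the same pointwise computation with the roles of the two frames exchanged (writing $Y_i=g_i^a\eta_a$ and again using the invariance relation rather than tensoriality). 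If you redo your bookkeeping with this expansion, the rest of your plan goes through.
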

 \begin{proof}
 The vertical distribution $V$ admits a basis of $G$-invariant sections $\{Y_1,...,Y_n\}$. For $\eta\in \g$, 
 there are functions $g^j\in C^\infty(Q)$, $j=1,...,n$ so that $\eta_Q = g^jY_j$  and hence 
 $0 = [Y_i, \eta_Q] = g^j[Y_i,Y_j] + Y_i(g^j)Y_j$. Then we obtain that
 \begin{equation*}
  \begin{split}
  \kappa([(\eta_a)_Q,(\eta_b)_Q],(\eta_c)_Q ) &= g_a^i g_b^jg_c^l \kappa([Y_i,Y_j],Y_l) + g_a^ig_c^l \kappa(Y_i(g_b^j)Y_j,Y_l) - g_b^jg_c^l\kappa(Y_j(g_a^i)Y_i,Y_l) \\
  & = - g_a^i g_b^jg_c^l \kappa([Y_i,Y_j],Y_l).
  \end{split}
 \end{equation*}
Conversely, we write $Y_i=g_i^a\eta_a$ and we repeat the computation. 
 \end{proof}
As a direct consequence of Lemma~\ref{L:Ex:StrongV}, \ if the kinetic energy is strong invariant on $V$, \, then 
 $\kappa([(\eta_a)_Q,(\eta_b)_Q],(\eta_c)_Q ) = - \kappa((\eta_b)_Q,[(\eta_a)_Q,(\eta_c)_Q])$ 
 for all $\eta_i\in\g$. Hence, for each $q\in Q$, there is an $ad$-invariant inner product on $\g$ 
 defined, at each $\eta_1, \eta_2\in \g$ by 
 $$
 \langle \eta_1, \eta_2\rangle_q = \kappa((\eta_1)_Q(q), (\eta_2)_Q(q)).
 $$
  Therefore, there exists a {\it family} of bi-invariant metrics $\kappa_G^q$ on $G$ 
  defined by $\kappa_G^q(\eta_1^L(g), \eta_2^L(g)) = \langle\eta_1, \eta_2\rangle_q$.  
\end{example}

\begin{example}\label{Ex:StrongAbelian}{\bf The symmetry group $G$ is abelian.}  Consider a 
nonholonomic system $(\cM,\Omega_\subM|_\C,H_\subM)$
with a $G$-symmetry, and let $G$ be an abelian Lie group, then the Lie algebra $\g$ is also abelian 
and the kinetic energy metric satisfies $\kappa([(\eta_1)_Q,(\eta_2)_Q],(\eta_3)_Q) = 0$ 
for all $\eta_i \in \g$. Following Example~\ref{Ex:StrongV}, we have also that 
$\kappa([Y_1,Y_2],Y_3) = 0$ for all $G$-invariant sections $Y_i$ on $V$ and 
hence the kinetic energy is trivially strong invariant on $V$. 
\end{example}

\begin{example} \label{Ex:HorSym1} {\bf Horizontal symmetries.}
Consider a 
nonholonomic system $(\cM,\Omega_\subM|_\C,H_\subM)$ with a $G$-symmetry
satisfying Conditions $\cA$ and with the bundle $\g_S\to Q$  admitting a global basis of 
$G$-invariant horizontal symmetries $\{\eta_1,...,\eta_k\}$ of the bundle $\g\times Q \to Q$. 
Then the vector space generated by the constant sections $\eta_i$ is an abelian subalgebra 
$\mathfrak{s}$ of $\g$ and the kinetic energy metric is strong invariant on $S$. 
\end{example}

\subsection{Determining the horizontal gauge momenta (in global coordinates)}\label{Ss:MomEq-Coord}

Consider a nonholonomic system $(\M, \Omega_\subM|_\C,H_\subM)$ with a $G$-symmetry satisfying Conditions $\cA$. From now on, we will also assume that the $G$-symmetry verifies that the manifold $Q/G$ has dimension 1 or equivalently the rank of any horizontal space $H$ defined as in \eqref{Eq:TQ=H+V} is 1. That is, we add a fourth assumption to Conditions $\cA$
\begin{ConditionA4} The $G$-symmetry satisfies that the manifold $Q/G$ has dimension 1. 
\end{ConditionA4}

Now, let us consider the horizontal distribution $H$ defined in \eqref{Eq:TQ=H+V}.
\begin{definition}\label{Def:S-orth} We say that $H$ is {\it $S$-orthogonal} if it is given by
$$
H := S^\perp \cap D,
$$
where the orthogonal space to $S$ is taken with respect to the kinetic energy metric.  
\end{definition}

The $S$-orthogonality of $H$ implies that $H$ is a $G$-invariant distribution while Condition\,$(\cA4)$  guarantees that it is trivial and thus it admits a ($G$-invariant) global generator.

Now, let $(\M, \Omega_\subM|_\C,H_\subM)$ be a nonholonomic system with a $G$-symmetry satisfying Conditions $(\cA 1)$-$(\cA 4)$ (that is, the $G$-symmetry satisfies Conditions $\cA$ and Condition $(\cA 4)$). Then there is a global $G$-invariant basis $\mathfrak{B}_{\g_S} = \{\xi_1,...,\xi_k\}$ of sections of $\g_S$ and, as usual, we denote $Y_i : = (\xi_i)_Q$ the corresponding sections on $S$. 
If we denote by $\rho_Q:Q\to Q/G$ the orbit projection and assuming that the horizontal 
space $H$ is $S$-orthogonal, then there exists a globally defined section $X_0$ generating 
the horizontal bundle $H\to Q$ that is $\rho_Q$-projectable. Hence $\{X_0, Y_1,...,Y_k\}$ defines a global basis of $D = H\oplus S$. Following splitting \eqref{Eq:TQ=H+S+W}, 
we also consider a (possible non global) basis 
$\{Z_1,...,Z_N\}$ of the vertical complement $W$ and we denote by $(v^0,v^1,...,v^k,w^1,...,w^N)$ 
the coordinates on $TQ$ associated to the basis 
\begin{equation}\label{Eq:BasisTQ}
\mathfrak{B}_{TQ} = \{X_0,Y_1,...,Y_k,Z_1,...Z_N\},
\end{equation}
(for short we write the coordinates $(v^0, v^i, w^a)$ associated to the basis 
$\mathfrak{B}_{TQ} = \{X_0, Y_j, Z_a\}$). 
If $\mathfrak{B}_{T^*Q} = \{X^0, Y^i, Z^a\}$ is the basis of $T^*Q$ dual to $\mathfrak{B}_{TQ}$, we denote by $(p_0,p_i,p_a)$ the induced coordinates on $T^*Q$.  
Then the constraint submanifold $\M$ is described as
\[
  \M = \{(q, p_0,p_i,p_a) \in T^*Q \ : \ p_a = \kappa_{a{\mbox{\tiny{$0$}}}} v^0 + \kappa_{aj}v^j\},
\]
where $p_0 = \kappa_{\mbox{\tiny{$00$}}} v^0 + \kappa_{{\mbox{\tiny{$0$}}}i}v^i$ and $p_i = \kappa_{i{\mbox{\tiny{$0$}}}} v^0 + \kappa_{ij}v^j$ with $\kappa_{\mbox{\tiny{$AB$}}} = \kappa(X_A, X_B)$ for $X_A,X_B\in \mathfrak{B}_{TQ}$ (i.e., $A,B\in \{0,i,a\}$). 
We now define the dual basis 
\begin{equation}\label{Eq:BasisTMT*M}
\mathfrak{B}_{T^*\!\M} = \{ \mathcal{X}^0, \mathcal{Y}^i, \mathcal{Z}^a, dp_0,dp_i\} \qquad \mbox{and} \qquad \mathfrak{B}_{T\M} = \{ \mathcal{X}_0, \mathcal{Y}_i, \mathcal{Z}_a, \partial_{p_0},\partial_{p_i}\}
\end{equation}
of  $T^*\M$ and $T\M$ respectively, where $\mathcal{X}^0 = \tau_\subM^*X^0$, $\mathcal{Y}^i=\tau_\subM^*Y^i$, $\mathcal{Z}^a=\tau_\subM^*Z^a$. Observe that, by the $G$-invariance of $p_0$ 
and $p_i$, $\mathcal{Y}_i = (\xi_i)_\subM$ and, moreover, by \eqref{Eq:J_i}  
\[
J_i = {\bf i}_{\mathcal{Y}_i} \Theta_\subM =  p_i.
\]

We now write the {\it momentum equation} \eqref{Eq:MomEq} in (global) coordinates, 
defined by the basis $\mathfrak{B}_{TQ}$ in \eqref{Eq:BasisTQ}.

\begin{lemma}\label{L:MomEq-Coord}
Suppose that the $G$-symmetry satisfies Conditions\,$(\cA1)$-$(\cA4)$ and the horizontal distribution $H$ in \eqref{Eq:TQ=H+V} is $S$-orthogonal.
In coordinates associated to the basis \eqref{Eq:BasisTQ}, a function ${\mathcal J} \in C^\infty(\M)$ 
of the form ${\mathcal J} = f_iJ_i$ is a $G$-invariant horizontal gauge momentum of the nonholonomic system $(\M, \Omega_\subM|_\C, H_\subM)$
if and only if the coordinate functions $f_i\in C^\infty(Q)^G$ satisfy 
\begin{equation}\label{Eq:MomEq-Coord}
v^lv^j\left(\, f_i \kappa(Y_j,[Y_i, Y_l])\,  \right) + (v^0)^2 \left(\, f_i \kappa(X_0, [Y_i, X_0])\, \right) + v^0v^j P_{0j}=0,
\end{equation}
where $P_{0j} :=  f_i ( \kappa(Y_j, [ Y_i, X_0]) + \kappa(X_0,[Y_i, Y_j]) ) - \kappa_{ij}X_0(f_i)$.
\end{lemma}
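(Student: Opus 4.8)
The plan is to start from the intrinsic momentum equation \eqref{Eq:MomEq} of Proposition~\ref{Prop:MomEq1} and rewrite every term in the chosen global coordinates. First I would express the nonholonomic vector field $X_\nh$ along the constraint manifold $\M$ in the basis \eqref{Eq:BasisTMT*M}: since $X_\nh \in \Gamma(\C)$ and $T\tau_\subM(X_\nh)$ is the velocity vector, one has $X_\nh = v^0 \mathcal{X}_0 + v^j \mathcal{Y}_j + (\dot p_0)\,\partial_{p_0} + (\dot p_i)\,\partial_{p_i}$ on $\M$ (the $\mathcal{Z}_a$-components vanish because $\C$ projects into $D = H\oplus S$). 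Then $X_\nh(f_i) = v^0 X_0(f_i) + v^j Y_j(f_i)$ because $f_i\in C^\infty(Q)^G$ is pulled back from $Q$ and, being $G$-invariant, has vanishing derivative along $\mathcal{Z}_a$ (which lie in $\V$); here one uses the $S$-orthogonality and Condition~$(\cA4)$ to know $X_0$ is $\rho_Q$-projectable so that the expression only involves $X_0$ and the $Y_j$. Since $J_i = p_i = \kappa_{i0}v^0 + \kappa_{ij}v^j$ on $\M$, the term $J_i X_\nh(f_i)$ becomes $(\kappa_{i0}v^0 + \kappa_{ij}v^j)(v^0 X_0(f_i) + v^l Y_l(f_i))$.

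Next I would handle the curvature term $f_i \langle J,\sigma_{\g_S}\rangle(\mathcal{Y}_i, X_\nh)$. Using the identity derived inside the proof of Proposition~\ref{Prop:MomEq1}, namely $\langle J, \sigma_{\g_S}\rangle(\mathcal{Y}_i,\mathcal{X}) = -\,{\bf i}_{[\mathcal{Y}_i,\mathcal{X}]}\Theta_\subM$ for $\mathcal{X}$ projectable, and writing $X_\nh = v^0\mathcal{X}_0 + v^j\mathcal{Y}_j + (\text{fiber terms})$, I would expand $[\mathcal{Y}_i, X_\nh]$. The fiber terms $\partial_{p_0},\partial_{p_i}$ bracket with $\mathcal{Y}_i = (\xi_i)_\subM$ to produce vertical (fiber-direction) vectors on which $\Theta_\subM$ may be nonzero, but these contributions reassemble into the metric pairings once one pushes forward to $Q$; more efficiently, I would use Remark~\ref{R:MomEq2}, which says the momentum equation is $f_i\,\Theta_\subM([(\xi_i)_\subM, X_\nh]) - J_i X_\nh(f_i) = 0$, together with the fact that $\Theta_\subM$ evaluated on the $\M$-lift of a vector field equals the $\kappa$-pairing of the corresponding vector fields on $Q$ (because $\M = \mathrm{Leg}(D)$ and $\kappa$ is the kinetic metric). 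Concretely, $\Theta_\subM([(\xi_i)_\subM, X_\nh])$ at a point of $\M$ over $q$ with base velocity $v = v^0 X_0 + v^j Y_j$ equals $\kappa\big(v,\, [Y_i,\, v^0 X_0 + v^l Y_l]\big)$, because the $T\tau_\subM$-image of $[(\xi_i)_\subM, X_\nh]$ is the projectable part $[Y_i, v^0 X_0 + v^l Y_l]$, and the Liouville form only sees this projection. Expanding this bilinearly gives exactly $(v^0)^2\kappa(X_0,[Y_i,X_0]) + v^0 v^j(\kappa(Y_j,[Y_i,X_0]) + \kappa(X_0,[Y_i,Y_j])) + v^l v^j \kappa(Y_j,[Y_i,Y_l])$, using the antisymmetry only to group the cross terms (and noting $v^0 v^j X_0(f_i)\kappa$-type pieces will combine with the $J_iX_\nh(f_i)$ contribution).

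Finally I would collect powers of the velocities. Grouping the $v^lv^j$-coefficients yields $f_i\kappa(Y_j,[Y_i,Y_l]) - \kappa_{ij}Y_l(f_i)$; however the second piece is symmetric-antisymmetrizable and, since the expression is contracted against the symmetric tensor $v^lv^j$, one may symmetrize, and here I expect the key simplification: the Levi-Civita / Koszul identity, or directly the definition of $J_i=p_i$, forces the $Y_l(f_i)$-terms in the pure-$v^jv^l$ block to cancel against those coming from $\kappa_{ij}v^j \cdot v^lY_l(f_i)$ after using $\kappa_{ij}=\kappa_{ji}$ — this is the step I expect to be the main bookkeeping obstacle, reconciling the symmetric and skew parts so that only $f_i\kappa(Y_j,[Y_i,Y_l])$ survives in the $(v^j v^l)$-block, $f_i\kappa(X_0,[Y_i,X_0])$ in the $(v^0)^2$-block, and the stated $P_{0j}$ (which packages $f_i(\kappa(Y_j,[Y_i,X_0]) + \kappa(X_0,[Y_i,Y_j])) - \kappa_{ij}X_0(f_i)$) in the $v^0 v^j$-block. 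The $G$-invariance statement then follows from Lemma~\ref{L:invariance}(ii): $\mathcal{J}=f_iJ_i$ is $G$-invariant iff the $f_i$ are, since the $J_i$ are already $G$-invariant by the choice of the $G$-invariant basis $\mathfrak{B}_{\g_S}$. One last check is that the equation, being polynomial of degree two in $(v^0,v^j)$ with coefficients pulled back from $Q$, holding on all of $\M$ is equivalent to the vanishing of each homogeneous coefficient on $Q$ — but since the statement only asserts the coordinate form of the momentum equation on $\M$, I would simply display \eqref{Eq:MomEq-Coord} as the faithful rewriting and stop there.
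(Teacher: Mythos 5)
Your overall route is the paper's route: restrict the intrinsic momentum equation \eqref{Eq:MomEq} to the adapted basis, use that $\langle J,\sigma_{\g_S}\rangle$ (equivalently $\Theta_\subM([\mathcal{Y}_i,\cdot])$ via Remark~\ref{R:MomEq2}) is semi-basic so only $T\tau_\subM(X_\nh)=v^0X_0+v^jY_j$ matters, decompose $[X_1,X_2]\in\Gamma(V)$ into its $S$- and $W$-parts, and invoke $S$-orthogonality to kill $\kappa_{0i}$. Your bilinear expansion of $\kappa\bigl(v,\,v^0[Y_i,X_0]+v^l[Y_i,Y_l]\bigr)$ reproduces exactly the coefficients in \eqref{Eq:MomEq-Coord}. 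However, there is one genuine gap, precisely at the step you flag as ``the main bookkeeping obstacle.'' You write $X_\nh(f_i)=v^0X_0(f_i)+v^jY_j(f_i)$ and then hope that the resulting $\kappa_{ij}v^jv^lY_l(f_i)$ contribution in the $(v^jv^l)$-block is cancelled by a Levi-Civita/Koszul identity or by symmetrization. No such cancellation exists, and nothing else in that block could absorb it. The correct resolution is immediate: $Y_l=(\xi_l)_Q$ is a section of $S\subset V$, i.e.\ tangent to the $G$-orbits, so a $G$-invariant function satisfies $Y_l(f_i)=0$ identically --- the same reasoning you already applied to the $\mathcal{Z}_a$-directions, which you must extend to all of $V$, not just to $W$. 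This is exactly what the paper does from the outset, writing $X_\nh(f_i)=v^0X_0(f_i)$ and hence $J_iX_\nh(f_i)=\kappa_{ij}v^jv^0X_0(f_i)$, which lands entirely in the $v^0v^j$-block as the $-\kappa_{ij}X_0(f_i)$ term of $P_{0j}$.

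Two smaller points in the same vein. First, $\Theta_\subM$ is semi-basic with respect to $\tau_\subM:\M\to Q$, so it vanishes on the $\partial_{p_0},\partial_{p_i}$ directions outright; your hedge that it ``may be nonzero'' there is unnecessary (and the brackets $[\mathcal{Y}_i,\partial_{p_\bullet}]$ are again $\tau_\subM$-vertical, so they contribute nothing). Second, your identification of $T\tau_\subM([\mathcal{Y}_i,X_\nh])$ with $v^0[Y_i,X_0]+v^l[Y_i,Y_l]$ silently discards the terms $\mathcal{Y}_i(v^0)X_0+\mathcal{Y}_i(v^l)Y_l$; these do vanish, but again only because the velocity coordinates $v^0,v^l$ are $G$-invariant functions on $\M$ and $\mathcal{Y}_i=(\xi_i)_\subM$ is tangent to the $G$-orbits. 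Once you consistently use $G$-invariance along \emph{all} vertical directions, your argument closes and coincides with the paper's.
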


\begin{proof}
We will show that \eqref{Eq:MomEq-Coord} is the coordinate version of the momentum equation \eqref{Eq:MomEq}. 
First, observe that the 2-form $\langle J, \sigma_{\g_S}\rangle$ is semi-basic with respect to the bundle $\tau_\subM:\M\to Q$. Let us denote by $\mathcal{X}_1$, $\mathcal{X}_2$ any element in the subset $\{{\mathcal X}_0, {\mathcal Y}_1,...,{\mathcal Y}_k\}$ of the basis $\mathfrak{B}_{T\M}$ in \eqref{Eq:BasisTMT*M}, and by $X_1:=T\tau_\subM(\mathcal{X}_1)$ and $X_2:=T\tau_\subM(\mathcal{X}_2)$ the corresponding elements in the basis of $\mathfrak{B}_{TQ}$. Then  we have
\begin{eqnarray*}
\langle J,\mathcal{K}_\subW\rangle (\mathcal{X}_1,\mathcal{X}_2) &= &  p_a\, dZ^a (X_1,X_2) = -p_a Z^a([X_1,X_2]) = -(\kappa_{{\mbox{\tiny{$0$}}} a}v^0 + \kappa_{ja} v^j)Z^a ([X_1,X_2]),\\
\langle J, d^\C{\mathcal Y}^i \otimes \xi_ i\rangle(\mathcal{X}_1,\mathcal{X}_2) &= & p_i d{\mathcal Y}^i(\mathcal{X}_1,\mathcal{X}_2) = -p_i Y^i ([X_1,X_2]) = -(\kappa_{{\mbox{\tiny{$0$}}} i} v^0 + \kappa_{ij} v^j)Y^i ([X_1,X_2]), \\ 
& = & -\kappa_{ij} v^jY^i ([X_1,X_2]), 
\end{eqnarray*}
since $\kappa_{{\mbox{\tiny{$0$}}}i} =0$ by the $S$-orthogonality of $H$. Using that $[X_1,X_2]\in \Gamma(V)$ (observe that $[Y_i,Y_j]\in \Gamma(V)$ since $V$ is integrable, and $[X_0,Y_i]\in \Gamma(V)$ since $X_0$ is $\rho_\subQ$-projectable, see Lemma \ref{L:invariance}) then $[X_1,X_2] = Z^a([X_1,X_2]) Z_a + Y^j([X_1,X_2]) Y_j$ and thus 
$$
\langle J, \sigma_{\g_S}\rangle(\mathcal{X}_1,\mathcal{X}_2) = -v^0\kappa(X_0, [X_1,X_2]) - v^j \kappa(Y_j, [X_1,X_2]).
$$
Second,  using that $T\tau_\subM(X_\nh(q,p)) = v^0X_0 + v^iY_ i$ (recall that $X_\nh$ is a {\it second order equation}) and also recalling that the functions $f_i$ are $G$-invariant on $Q$, we obtain that the {\it momentum equation} in Proposition~\ref{Prop:MomEq1} is written as 
$$
0  = f_i v^0\langle J, \sigma_{\g_S}\rangle(\mathcal{Y}_i, \mathcal{X}_0) + f_i v^j\langle J, \sigma_{\g_S}\rangle(\mathcal{Y}_i,\mathcal{Y}_j) + p_i v^0 X_0(f_i).
$$
Putting together the last two equations we obtain \eqref{Eq:MomEq-Coord}.
\end{proof} 

\begin{remark}
If the horizontal distribution $H$ is not chosen to be $S$-orthogonal, then the {\it momentum equation} \eqref{Eq:MomEq-Coord} is modified in one of the terms:
 $$
 v^lv^j\left(\, f_i \kappa(Y_j,[Y_i, Y_l])\,  \right) + (v^0)^2 \left(\, f_i \kappa(X_0, [Y_i, X_0]) - \kappa_{0i} X_0(f_i)\, \right) + v^0v^j P_{0j}=0.
 $$
In order to obtain the simplest form of the coordinate version of the {\it momentum equation}, we require the orthogonality condition between $H$ and $S$. 
\end{remark}

As a consequence of Lemma \ref{L:MomEq-Coord}, we can state the main result of the paper.  



\begin{theorem}\label{T:Main}
Consider a nonholonomic system $(\cM,\Omega_\subM|_\C,H_\subM)$
with a $G$-symmetry satisfying Conditions $(\cA 1)$-$(\cA 4)$ and with a $S$-orthogonal horizontal space $H$.
Moreover assume that the kinetic energy metric is strong invariant on $S$ and that
 \[
   \kappa(X_0, [Y, X_0]) = 0
 \]  
 for  $X_0$  a $\rho$-projectable vector field on $Q$ taking values in  $H$ 
 and for all $Y\in\Gamma(S)$. Then 
 \begin{enumerate}
  \item[$(i)$] the system admits $k=\textup{rank}(S)$ $G$-invariant (functionally independent) 
  horizontal gauge momenta. 
  \end{enumerate}
  Moreover, let us consider a $G$-invariant basis $\mathfrak{B}_{\g_S} = \{\xi_1,...,\xi_k\}$ of $\g_S$,
  with $Y_i=(\xi_i)_Q$, and define the $G$-invariant functions $R_{ij}$ on $Q$ given by 
 \begin{equation}\label{Eq:R_ij}
 R_{ij} = \kappa^{il} [\kappa(Y_l, [ Y_j, X_0]) + \kappa(X_0,[Y_j, Y_l])],
 \end{equation}
 where $\kappa^{il}$ are the elements of the matrix $[\kappa|_S]^{-1}$ and $[\kappa|_S]$ 
 is the matrix given by the elements $\kappa_{il}$. 
If $\bar{X}_0$ is a globally defined vector field on $Q/G$ such that 
$T\rho_\subQ(X_0) = \bar{X}_0$, then
  \begin{enumerate}
  \item[$(ii)$] the $k$ solutions  $f^l=(\bar{f}^l_1,...,\bar{f}^l_k)$ for $l=1,...,k$ of the linear system of ordinary differential equations on $Q/G$ given by 
 \begin{equation}\label{Eq:ODESystem}
R_{ij}  \bar{f}_j- \bar{X}_0(\bar{f}_i) = 0,
 \end{equation}
 define $k$ (functionally independent)  $G$-invariant horizontal gauge momenta given by 
 $$
 {\mathcal J}^l = f^l_i J_i ,
 $$
 for $J_i= {\bf i}_{\xi_\subM}\Theta_\subM$ (the functions defined in \eqref{Eq:J_i}) and $f^l_i=\rho^*\bar{f}^l_i$, $l=1,...,k$.
 \end{enumerate}
\end{theorem}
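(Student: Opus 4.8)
The plan is to deduce Theorem~\ref{T:Main} from the coordinate momentum equation \eqref{Eq:MomEq-Coord} of Lemma~\ref{L:MomEq-Coord}, showing that under the extra hypotheses this single quadratic-in-velocities identity decouples into an ODE system on $Q/G$. First I would substitute the standing assumptions into \eqref{Eq:MomEq-Coord}: the hypothesis $\kappa(X_0,[Y,X_0])=0$ for all $Y\in\Gamma(S)$ kills the $(v^0)^2$-term entirely; and the strong invariance of $\kappa$ on $S$ forces the $v^lv^j$-coefficient $f_i\kappa(Y_j,[Y_i,Y_l])$ to vanish after symmetrization in $(j,l)$ — indeed $\kappa(Y_j,[Y_i,Y_l]) = -\kappa(Y_l,[Y_i,Y_j])$ means the symmetric part in $j,l$ is zero, so since $v^lv^j$ is symmetric the whole term drops. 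What survives is $v^0v^j P_{0j}=0$, and since this must hold for all admissible velocities $(v^0,v^1,\dots,v^k)$ in the fibre (which range over a full-dimensional set), we conclude $P_{0j}=0$ for each $j=1,\dots,k$, i.e.
\[
f_i\bigl(\kappa(Y_j,[Y_i,X_0]) + \kappa(X_0,[Y_i,Y_j])\bigr) = \kappa_{ij}X_0(f_i).
\]

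Next I would rewrite this as a genuine linear first-order system. Multiplying through by the inverse matrix $\kappa^{il}$ of $[\kappa|_S]$ and relabelling indices turns the left side into $R_{lj}f_l$ with $R_{ij}$ as defined in \eqref{Eq:R_ij}, while the right side becomes $X_0(f_i)$; so the momentum equation is equivalent to $R_{ij}f_j - X_0(f_i)=0$. Here I must check that the $R_{ij}$ and the equation itself are $G$-invariant: this follows from Lemma~\ref{L:invariance}(iii) (brackets $[Y_i,X_0]$ and $[Y_i,Y_j]$ lie in $\Gamma(V)$ since $X_0$ is $\rho_Q$-projectable and $V$ is integrable), from $G$-invariance of $\kappa$, of the $Y_i$, and of $X_0$ modulo $V$ — so $R_{ij}\in C^\infty(Q)^G$ and drops to a function $R_{ij}\in C^\infty(Q/G)$, and $X_0$ drops to $\bar X_0$ on $Q/G$. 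Since $\dim(Q/G)=1$ by $(\cA4)$, $\bar X_0$ is (away from its zeros) a nonvanishing vector field on a one-dimensional manifold, so \eqref{Eq:ODESystem} is an honest linear ODE system $\bar X_0(\bar f_i) = R_{ij}\bar f_j$ along the integral curves of $\bar X_0$.

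Then part $(ii)$ is the standard existence-uniqueness theorem for linear ODEs: prescribing $k$ linearly independent initial conditions at a point yields $k$ solutions $f^l=(\bar f^l_1,\dots,\bar f^l_k)$ whose Wronskian-type determinant $\det(\bar f^l_i)$ never vanishes (it satisfies a scalar linear ODE, namely $\bar X_0(\det) = \mathrm{tr}(R)\det$, hence is nowhere zero if nonzero initially), so the associated functions $\mathcal J^l = f^l_i J_i$ on $\M$ are functionally independent — independence of the $\mathcal J^l$ follows from pointwise linear independence of the coefficient vectors together with linear independence of $J_1,\dots,J_k$ as established in the Proposition preceding Conditions~$\cA$. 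Each $\mathcal J^l$ is $G$-invariant because the $f^l_i=\rho^*\bar f^l_i$ and the $J_i$ are (Lemma~\ref{L:invariance}(i),(ii)), and each is a horizontal gauge momentum because its coordinate functions solve the momentum equation. This simultaneously proves $(i)$. I expect the main technical obstacle to be the careful justification that \eqref{Eq:MomEq-Coord} holding identically on $\M$ lets us extract $P_{0j}=0$ as an equation on $Q$ — one needs that the fibre coordinates $(v^0,v^i)$ genuinely vary freely (they parametrize $D_q$, which they do), so that a polynomial identity in $v$ forces coefficient-wise vanishing — and, relatedly, the bookkeeping that converts the invariant (coordinate-free) statement into the clean form \eqref{Eq:ODESystem} while tracking $G$-invariance at every step so that the descent to $Q/G$ is legitimate.
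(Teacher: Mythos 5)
Your proposal is correct and follows essentially the same route as the paper's proof: both start from the coordinate momentum equation \eqref{Eq:MomEq-Coord}, use strong invariance and $\kappa(X_0,[Y,X_0])=0$ to kill the purely quadratic terms, reduce to $P_{0j}=0$, rewrite this via $[\kappa|_S]^{-1}$ as the linear system \eqref{Eq:ODESystem}, and descend to $Q/G$ using $G$-invariance of the $R_{ij}$ to invoke existence of $k$ independent solutions. Your explicit Wronskian argument for functional independence is a welcome elaboration of a step the paper leaves implicit, but it is not a different method.
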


\begin{proof}
Let us consider the $G$-invariant basis $\mathfrak{B}_{\g_S} = \{\xi_1,...,\xi_k\}$ in \eqref{Eq:Basis_gS} with $Y_i=(\xi_i)_Q$ for $i=1,...,k$ and the basis $\mathfrak{B}_{TQ}$ and $\mathfrak{B}_{T^*Q}$ in \eqref{Eq:BasisTQ}. Then, from Lemma \ref{L:MomEq-Coord} we have that ${\mathcal J}= f_iJ_i$, for $f_i\in C^\infty(Q)^G$ is a horizontal gauge momentum if and only if equation \eqref{Eq:MomEq-Coord} is satisfied.  
Since \eqref{Eq:MomEq-Coord} is a second order polynomio in the variables $(v^0, v^i)$, it is zero when its associated matrix is skew-symmetric, that is when
\begin{enumerate}
\item[$(i)$] $\kappa(Y_j,[Y_i, Y_l]) = - \kappa(Y_l,[Y_i, Y_j])$, for all $i,j,l=1,...,k$,
\item[$(ii)$] $f_i\kappa(X_0, [Y_i, X_0]) = 0$,
\item[$(iii)$] $P_{0j} = 0$, for all $j=1,...,k$. 
\end{enumerate}
First we observe that items $(i)$ and $(ii)$ are trivially satisfied by the hypotheses of the theorem (item $(i)$ is just the definition of strong invariance). Second, we prove that item $(iii)$ determines the system of ordinary differential equations \eqref{Eq:ODESystem} defining the $G$-invariant functions $f_i$.  

Let us define the matrix $[N]$ with entries $N_{lj} = \kappa(Y_l, [ Y_j, X_0]) + \kappa(X_0,[Y_j, Y_l])$ and $[\kappa|_S]$ the kinetic energy matrix restricted to $S$ (which is symmetric and invertible with elements $\kappa_{li}$).  Then, the condition $P_{0j} = 0$ is written in matrix form as 
$  [N] f =  [\kappa|_S]X_0(f)$ for $f = (f_1,...,f_k)^t$, which is equivalent to  $R.f = X_0(f)$ for $R$ the matrix with entries $R_{ij} = [\kappa|_S]^{il} N_{lj} $. Therefore, item $(iii)$ is satisfied if and only if the functions $f= (f_1,...,f_k)$ are a solution of the linear system of differential equations defined on $Q$
\begin{equation}\label{Eq:System}
R_{ij}f_j - X_0(f_i) = 0, \quad \mbox{for each } i=1,...,k.
\end{equation}

Since $X_0\in \Gamma(H)$ is $\rho_\subQ$-projectable, then there is a (globally defined) vector field $\bar{X}_0$ on $Q/G$ such that $T\rho_\subQ(X_0) = \bar{X}_0$. Moreover, $R_{ij}$ are also $G$-invariant functions ($\kappa, X_0$ and $Y_i$ are $G$-invariant), and thus we conclude that the system 
\eqref{Eq:System} is well defined on $Q/G$.  That is, \eqref{Eq:System} represents a (globally defined) linear system of $k$ ordinary differential equations for the functions $(\bar f_1,..., \bar f_k)$ on $Q/G$, that is written as 
\begin{equation}\label{Eq:SystemODE}
 R_{ij} \bar{f}_j - \bar X_0(\bar f_i) = 0, \quad \mbox{for each } i=1,...,k.
\end{equation}
where $R_{ij}$ are viewed here as functions on $Q/G$.  The system \eqref{Eq:SystemODE} admits $k$ independent solutions $ \bar{f}^l = (\bar f_1^l,..... , \bar f_k^l)$ for $l=1,...,k$. Moreover, $f^l=(f_1^l,..... , f_k^l)$ with $f_i^l = \rho^*(\bar f_i^l)$ are $k$ independent solutions of \eqref{Eq:System}  and hence ${\mathcal J}^l =  f_i^l J_i$ are (functionally independent) $G$-invariant horizontal gauge momenta for $l=1,...,k$.

It is important to note that item $(iii)$ is the only item determining the functions $f_i$, while the other two items are intrinsic conditions imposed on the nonholonomic system.

\end{proof}

\begin{remark}
  The momentum equation \eqref{Eq:MomEq-Coord} does not depend on the potential energy 
  function but only on the $G$-invariance of it. As a consequence, the horizontal gauge 
  momentum $\mathcal{J}$, defined from Theorem~\ref{T:Main}, is a first integral of 
  $(\M,\Omega_\subM|_\C,H_\subM = \kappa|_\subM + U)$ for {\it any G-invariant} potential 
  energy function $U$ on $Q$. Such a property, called weak-Noetherinity, has been first 
  observed and studied in \cite{FGS2008,FGS2009,FGS2012}.
\end{remark}

\begin{corollary}\label{C:Cond-Friends} 
Consider a  nonholonomic system $(\cM,\Omega_\subM|_\C,H_\subM)$ with a 
$G$-symmetry satisfying Conditions $(\cA 1)$-$(\cA 4)$ and with a strong invariant kinetic energy on $S$.  
If the horizontal space $H$, defined in \eqref{Eq:TQ=H+V}, is orthogonal to the vertical space 
$V$ (with respect to the kinetic energy metric), then the system admits automatically 
$k=\textup{rank}(S)$ $G$-invariant (functionally independent) horizontal gauge momenta. 
\end{corollary}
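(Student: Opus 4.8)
The plan is to show that the corollary's hypotheses constitute a (stronger but more transparent) special case of the hypotheses of Theorem~\ref{T:Main}, and then to invoke that theorem directly. Conditions $(\cA 1)$--$(\cA 4)$ and the strong invariance of $\kappa$ on $S$ are assumed outright, so only two things require checking: that the horizontal space $H$ is $S$-orthogonal in the sense of Definition~\ref{Def:S-orth}, and that $\kappa(X_0,[Y,X_0])=0$ for a $\rho_\subQ$-projectable $X_0\in\Gamma(H)$ and every $Y\in\Gamma(S)$.

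First I would establish that $H=S^\perp\cap D$ by a dimension count. Since $S\subset V$, the hypothesis $H\perp V$ gives $H\perp S$ and hence $H\subseteq S^\perp$; combined with $H\subseteq D$ from \eqref{Eq:TQ=H+V} this yields $H\subseteq S^\perp\cap D$. On the other hand, the dimension assumption $(\cA 1)$ gives $D\cap V=S$, so restricting the splitting $TQ=H\oplus V$ to $D$ (using $H\subset D$) produces $D=H\oplus S$; therefore $\textup{rank}(H)=\textup{rank}(D)-\textup{rank}(S)$, which is precisely $\textup{rank}(S^\perp\cap D)$. Hence the inclusion $H\subseteq S^\perp\cap D$ is an equality and $H$ is $S$-orthogonal, so Condition $(\cA 4)$ provides a $\rho_\subQ$-projectable global generator $X_0$ of $H$.

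Next I would verify the bracket condition. Fix such an $X_0$ and any $Y\in\Gamma(S)$. By Lemma~\ref{L:invariance}$(i)$ there is a $G$-invariant basis $\{\xi_1,\dots,\xi_k\}$ of $\Gamma(\g_S)$ whose infinitesimal generators span $S$, so $Y=\zeta_Q$ for some $\zeta\in\Gamma(\g_S)\subset\Gamma(Q\times\g)$. Applying Lemma~\ref{L:invariance}$(iii)$ to the $\rho_\subQ$-projectable field $X_0$ gives $[X_0,Y]=[X_0,\zeta_Q]\in\Gamma(V)$, and since $X_0\in\Gamma(H)$ and $H\perp V$ we conclude $\kappa(X_0,[Y,X_0])=-\kappa(X_0,[X_0,Y])=0$.

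With these two points settled, all hypotheses of Theorem~\ref{T:Main} hold, and the theorem yields the $k=\textup{rank}(S)$ functionally independent $G$-invariant horizontal gauge momenta (constructed, if desired, by solving the linear ODE system \eqref{Eq:ODESystem} on $Q/G$). The only point needing a little care is the dimension count identifying $H$ with $S^\perp\cap D$; beyond that the argument is a direct application of Lemma~\ref{L:invariance}, so I do not anticipate any real obstacle — the corollary is essentially Theorem~\ref{T:Main} repackaged under the easy-to-check hypothesis $H\perp V$.
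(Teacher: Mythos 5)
Your proof is correct and follows the same route as the paper: the paper's own proof simply notes that $H=V^\perp$ forces $H$ to be $S$-orthogonal and $\kappa(X_0,[Y_i,X_0])=0$, then invokes Theorem~\ref{T:Main}. You have merely filled in the (correct) details — the dimension count identifying $H$ with $S^\perp\cap D$ and the use of Lemma~\ref{L:invariance}$(iii)$ to place $[X_0,Y]$ in $\Gamma(V)$ — which the paper leaves implicit.
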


\begin{proof} If $V^\perp =  H$ then $H$ is $S$-orthogonal and also $\kappa(X_0, [Y_i, X_0]) = 0$ for all $i=1,...,k$.
Thus we are under the hypothesis of Theorem \ref{T:Main}.  
\end{proof}


\begin{remark}\label{R:EquivConditions} 
 Since it is not always possible to choose $H = V^\perp$ with $H\subset D$, 
 in some examples we have to check that $\kappa(X_0, [X_0,Y])=0$ for all 
 $Y\in \Gamma(S)$. This condition is equivalently written as $\kappa(X_0, [X_0,Y_i])=0$ 
 for all $i=1,...,k$ where $Y_i = (\xi_i)_Q$ with $\xi_i$ elements of the $G$-invariant basis 
 $\mathfrak{B}_{\g_S}$ in \eqref{Eq:Basis_gS}, which is identically expressed as 
 $(\pounds_{X_0} \kappa )(X_0,Y_i) = 0$ or $\kappa(\nabla_{X_0}Y_i,X_0) = 0$ 
 for $\nabla$ the Levi-Civita connection associated to the kinetic energy metric. 
\end{remark}

\noindent{\bf Guiding Example: nonholonomic oscillator.} The nonholonomic oscillator describes a particle 
in $Q=S^1\times\R\times S^1$ with a Lagrangian given by $L=\frac{m}{2}(\dot x^2+ \dot y^2+ \dot z^2) - U(y)$ 
and constraints in the velocities $\dot z = y\dot x$. The constraint distribution is given by 
$D=\textup{span} \{Y:= \partial_x + y\partial_z, \partial_y\}$.  The Lie group 
$G=S^1\times S^1$ acts on $Q$ so that $V=\textup{span}\{\partial_x, \partial_z\}$ 
and leaves $D$ and $L$ invariant. Then $S=\textup{span}\{Y\}$ and the kinetic energy metric is trivially strong invariant on $S$ since  $\textup{rank} (S) =1$ (in fact, it is strong invariant on $V$, 
see Example~\ref{Ex:StrongAbelian}). Moreover, we see that 
$V^\perp = \textup{span}\{\partial_y \} \subset D$ and hence
defining the horizontal space $H:= V^\perp$, Corollary \ref{C:Cond-Friends} 
guarantees the existence of one $G$-invariant horizontal gauge momentum.

Next, we will follow Theorem \ref{T:Main} to compute the horizontal gauge momentum $\mathcal{J}$ for this example.    Let us consider the basis $\mathcal{B}_{TQ}=\{X_0 = \partial_y, Y = \partial_x + y\partial_z, \partial_z\}$ of $TQ$ with coordinates $(v^0, v^{\mbox{\tiny{$Y$}}}, v^z)$. Observe that this basis induces the vertical complement of the constraints $W=\textup{span}\{\partial_z\}$. Then on $T^*Q$  we have the dual basis $\mathcal{B}_{T^*Q} = \textup{span}\{dy , dx, \epsilon := dz-ydx\}$ with coordinates $(p_0, p_{\mbox{\tiny{$Y$}}}, p_z)$. The constraint submanifold $\M$ is given by $\M=\{x,y,z,p_0,p_{\mbox{\tiny{$Y$}}},p_z) \ : \ p_z = \frac{y}{1+ y^2} p_{\mbox{\tiny{$Y$}}}\}$.

Recall that $G$ acts on $Q$ defining a principal bundle $\rho_\subQ:Q\to Q/G$ so that $\rho_\subQ(x,y,z) = y$. 
The Lie algebra of the symmetry group is $\g=\R^2$ and $\g_S = \textup{span}\{\xi = (1,y)\}$ while $\g_W = \textup{span}\{(0,1)\}$. Following \eqref{Eq:J_i}, the element $\xi\in \Gamma(\g_S)$ defines the function $J_\xi := \langle J^\nh, \xi\rangle =p_{\mbox{\tiny{$Y$}}}$  and the horizontal gauge momentum will be written as ${\mathcal J}= f(y) p_{\mbox{\tiny{$Y$}}}$ ($f$ is already considered as a $G$-invariant function on $Q$). 
 
\noindent {\it The  momentum equation from Proposition~\eqref{Prop:MomEq1}:} 
The function ${\mathcal J}$ is a horizontal gauge momenta if and only if $f$ 
satisfies that $ f(y) \langle J, \sigma_{\g_S}\rangle (\xi_\subM, X_\nh) + 
p_{\mbox{\tiny{$Y$}}} X_\nh(f)= 0$.  Since $d^\C dx=0$ then $\langle J, 
d^\C dx \otimes \xi\rangle = 0$ and thus the momentum equation remains 
\begin{equation}
   f(y) \langle J, \mathcal{K}_\subW\rangle (\xi_\subM, X_\nh) +p_{\mbox{\tiny{$Y$}}} f'(y) = 0.    
\end{equation}

\noindent {\it The differential equation of Theorem~\ref{T:Main}:} Next, we write the momentum equation in coordinates as it is expressed \eqref{Eq:ODESystem}.  Since $\textup{rank}(S) =1$, the ordinary differential equation to be solved, for $f=f(y)$, is $R_{\mbox{\tiny{$YY$}}}f - f'=0$ for  
 $$
 R_{\mbox{\tiny{$YY$}}} = \tfrac{1}{\kappa(Y,Y)} \, \kappa(Y, [Y , \partial_y]) =  - \tfrac{y}{1+y^2}.
  $$
 Therefore, the solution of the ordinary differential equation 
 \begin{equation}\label{Eq:NHPart-ODE}
  \tfrac{y}{1+y^2} f + f' = 0,
 \end{equation}
 gives the (already known) horizontal gauge momenta ${\mathcal J}= \frac{1}{\sqrt{1+y^2}}p_{\mbox{\tiny{$Y$}}}$ (which in canonical coordinates gives ${\mathcal J} = \sqrt{1+y^2}\, p_x$).

\subsection{A geometric interpretation: horizontal gauge symmetries as parallel sections} \label{Ss:ParallelTransport}

In this section, we will see how a horizontal gauge symmetry can be constructed by 
parallel transporting an element $\xi_0\in (\g_S)_{q_0}$, for $q_0\in Q$, along the dynamics using a specific affine connection.  
Consider the splitting $TQ = H \oplus S \oplus W$ of the tangent bundle, in which we not only take 
the distribution $H$ to be $S$-orthogonal, but we also choose the vertical complement $W$ orthogonal to $S$:
 \begin{equation*} 
   W := S^\perp \cap V.
 \end{equation*} 

On the bundle $\g_S\to Q$, we define the affine connection $\widehat\nabla:\mathfrak{X}(Q) \times \Gamma(\g_S) \to \Gamma(\g_S)$ given, at each $X\in\mathfrak{X}(Q)$ and $\xi\in\Gamma(\g_S)$,  by 
\begin{equation}\label{Eq:AffConexion}
\widehat\nabla_X\, \xi := A_S( \nabla_X \, \xi_\subQ), 
\end{equation}
where $\nabla: \mathfrak{X}(Q)\times \mathfrak{X}(Q)\to \mathfrak{X}(Q)$ is the Levi-Civita connection with respect to the kinetic energy metric and $A_S:TQ \to \g$ is the bundle map defined in \eqref{Eq:DefAs}.  Observe that, since $\textup{Im}(A_S) = \g_S$, then $\widehat\nabla$ is well defined.

\begin{remark} It is straightforward to check that $\widehat\nabla$ is, in fact, an affine connection. Moreover, this connection is related with the {\it nonholonomic connection} restricted to the bundle $\g_S\to Q$ (see e.g., \cite{CCdLdD}).  
\end{remark}

Next, we will modify this affine connection using a {\it gauge transformation}.\footnote{In this case, the terminology gauge transformation is used to modify an affine connection using gauge theory, \cite{Naber,Nash,RS}.  In Section \ref{Ss:Hamiltonization} a gauge transformation is used to modify almost Poisson brackets.} Assuming Conditions $(\cA 1)$-$(\cA 4)$, we denote by $\mathfrak{B}_{\g_S} = \{\xi_1,...,\xi_k\}$ a global $G$-invariant basis of sections of the bundle $\g_S\to Q$ and we recall the basis $\mathfrak{B}_{TQ}$ and $\mathfrak{B}_{T^*Q}$ defined in \eqref{Eq:BasisTQ}:
\begin{equation}\label{Eq:BasisParallel}
\mathfrak{B}_{\g_S} = \{\xi_1,...,\xi_k\}, \qquad \mathfrak{B}_{TQ} = \{X_0,Y_i,Z_a\} \quad \mbox{and} \quad \mathfrak{B}_{T^*Q} = \{X^0, Y^i, Z^a\},
\end{equation}
where $Y_i = (\xi_i)_\subQ$ for $i=1,...,k$.


\begin{definition} \label{Def:SigmaConnection}
The $\Sigma${\it-connection} is the affine connection 
$\overset{\textit{\tiny{$\Sigma$}}}{\nabla} : \mathfrak{X}(Q) \times \Gamma(\g_S) \to \Gamma(\g_S)$
defined, for $X\in \mathfrak{X}(Q)$ and $\zeta\in \Gamma(\g_S)$, by 
\[
  \overset{\textit{\tiny{$\Sigma$}}}{\nabla}_X \, \zeta := \widehat\nabla_X \, \zeta + \Sigma(X, \zeta_Q)
\] 
where $\widehat\nabla$ is the affine connection defined in \eqref{Eq:AffConexion} 
and $\Sigma$ is the $\g_S$-valued bilinear form $\Sigma = \Sigma^l \otimes \xi_ l$ 
where $\Sigma^l$ are the bilinear forms given, in the basis \eqref{Eq:BasisParallel}, by  
\[
  \Sigma^l = -(\widehat\Gamma_{0j}^l + R_{lj} ) X^0\otimes Y^j - \widehat\Gamma_{ij}^l Y^i \otimes Y^j,
\]
where $\hat\Gamma_{0j}^l$ and $\hat\Gamma_{ij}^l$ are the Christoffel symbols 
of the affine connection $\widehat\nabla$ and $R_{ij}$ are the functions defined in \eqref{Eq:R_ij}.
\end{definition}

\begin{remark}
The $\Sigma$-connection is still an affine connection since $\Sigma$ is a bilinear form, 
which does not need to be skew-symmetric. For short, we may write 
$\overset{\textit{\tiny{$\Sigma$}}}{\mathcal{\nabla}} := \widehat\nabla + \Sigma$ 
and observe that the $\Sigma$-connection is a {\it gauge covariant derivative}, \cite{Naber,Nash,RS}.   
\end{remark}

Next, we show that a horizontal gauge symmetry is a  parallel section of $\g_S\to Q$ with 
respect to the $\Sigma$-connection.   For that purpose, let us denote by $c(t)\in \M$ the 
integral curve of $X_\nh$ and by $\gamma(t) = \tau_\M(c(t))$ the corresponding curve on $Q$.

\begin{theorem}\label{T:ParallelTransport}
Let $(\M, \Omega_\subM|_\C, H_\subM)$ be a nonholonomic system with 
 a $G$-symmetry satisfying Conditions $(\cA 1)$-$(\cA 4)$, with a strong invariant kinetic energy 
 on $S$ and such that the horizontal space $H$ in \eqref{Eq:TQ=H+V} is $S$-orthogonal.
 Let us denote by $\gamma(t)$ the curve on $Q$ given by  $\gamma(t) := \tau_\subM(c(t))$ 
 where $c(t)$ is the integral curve of $X_{\emph\nh}$.
 If $\kappa(X_0,[Y,X_0])=0$ for all $Y\in \Gamma(S)$ and $X_0$ 
 a $\rho$-projectable vector field on $Q$ taking values in $H$, then the parallel transport of 
 $\zeta_0\in (\g_S)_{q_0}$, for $q_0\in Q$, with respect to the $\Sigma$-connection along 
 the nonholonomic dynamics $\gamma(t)$ on $Q$ passing through $q_0$, generates a 
 horizontal gauge symmetry. 
 In other words, if a $G$-invariant section $\zeta \in \Gamma(\g_S)$ satisfies that 
 $\zeta(q_0) = \zeta_0$ and 
 \[
   \overset{\textit{\tiny{$\Sigma$}}}{\mathcal{\nabla}}_{\dot \gamma(t)}\, \zeta = 0,
 \] 
 then the function ${\mathcal J}_\zeta = \langle J^{\emph\nh}, \zeta \rangle \in C^\infty(\M)$ 
 is a horizontal gauge momentum. 
\end{theorem}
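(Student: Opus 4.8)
The plan is to reduce the claimed parallel-transport characterization to the coordinate momentum equation \eqref{Eq:MomEq-Coord}, or rather to its equivalent reformulation as the linear ODE system \eqref{Eq:System}, and then simply unwind the definition of the $\Sigma$-connection. First I would write a $G$-invariant section $\zeta\in\Gamma(\g_S)$ in the global basis as $\zeta=f_i\,\xi_i$ with $f_i\in C^\infty(Q)^G$, so that ${\mathcal J}_\zeta=f_iJ_i$ and, by Proposition~\ref{Prop:MomEq1} together with Lemma~\ref{L:MomEq-Coord} (whose hypotheses are exactly those assumed here), ${\mathcal J}_\zeta$ is a horizontal gauge momentum if and only if the $f_i$ solve \eqref{Eq:System}, i.e. $X_0(f_i)=R_{ij}f_j$. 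So it suffices to show that the parallel-transport equation $\overset{\textit{\tiny{$\Sigma$}}}{\nabla}_{\dot\gamma(t)}\zeta=0$ along the nonholonomic dynamics is equivalent to \eqref{Eq:System}.

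The key computation is to expand $\overset{\textit{\tiny{$\Sigma$}}}{\nabla}_{\dot\gamma}\zeta$ in the basis $\mathfrak{B}_{\g_S}=\{\xi_1,\dots,\xi_k\}$. Since $X_\nh$ is a second order equation, the velocity $\dot\gamma(t)=T\tau_\subM(X_\nh(c(t)))$ decomposes as $\dot\gamma = v^0X_0 + v^jY_j$ (the $W$-component vanishes on $\M$). Using bilinearity of the connection, $\overset{\textit{\tiny{$\Sigma$}}}{\nabla}_{\dot\gamma}\zeta = v^0\,\overset{\textit{\tiny{$\Sigma$}}}{\nabla}_{X_0}\zeta + v^j\,\overset{\textit{\tiny{$\Sigma$}}}{\nabla}_{Y_j}\zeta$, and each term expands via $\overset{\textit{\tiny{$\Sigma$}}}{\nabla}_X(f_i\xi_i) = X(f_i)\xi_i + f_i\widehat\nabla_X\xi_i + f_i\Sigma(X,Y_i)$. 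Writing $\widehat\nabla_{X_0}\xi_i = \widehat\Gamma_{0i}^l\xi_l$, $\widehat\nabla_{Y_j}\xi_i = \widehat\Gamma_{ji}^l\xi_l$, and reading off $\Sigma^l(X_0,Y_i) = -(\widehat\Gamma_{0i}^l + R_{li})$, $\Sigma^l(Y_j,Y_i) = -\widehat\Gamma_{ji}^l$ from Definition~\ref{Def:SigmaConnection}, one sees that the Christoffel terms cancel against the $\Sigma$ terms in both the $X_0$ and the $Y_j$ directions. What survives is $\overset{\textit{\tiny{$\Sigma$}}}{\nabla}_{\dot\gamma}\zeta = \left(v^0 X_0(f_l) + v^j Y_j(f_l) - v^0 R_{li}f_i\right)\xi_l$. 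Because $\zeta$ is $G$-invariant, the $f_l$ are $\rho_\subQ$-projectable functions, hence $Y_j(f_l)=0$; the expression collapses to $v^0\left(X_0(f_l) - R_{li}f_i\right)\xi_l$. Thus $\overset{\textit{\tiny{$\Sigma$}}}{\nabla}_{\dot\gamma(t)}\zeta=0$ along the dynamics (where generically $v^0\neq0$) is exactly \eqref{Eq:System}, and conversely if \eqref{Eq:System} holds then the parallel-transport equation holds along every integral curve.

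Finally I would address the existence and well-definedness of the parallel section $\zeta$ with prescribed initial value $\zeta(q_0)=\zeta_0$: standard ODE theory on the pullback bundle gives a unique parallel section along $\gamma$, and one should note (using Lemma~\ref{L:invariance} and the $G$-invariance of $\widehat\nabla$, $R_{ij}$, and $X_0$ as established in the proof of Theorem~\ref{T:Main}) that the resulting coordinate functions are $\rho_\subQ$-projectable, so that $\zeta$ extends to a genuine $G$-invariant section of $\g_S\to Q$ and ${\mathcal J}_\zeta=\langle J^{\nh},\zeta\rangle$ is globally well defined. The main obstacle — really the only delicate point — is verifying carefully that the Christoffel symbols of $\widehat\nabla$ appearing through the covariant derivative of the basis sections cancel precisely against the $\Sigma^l$ terms, i.e. checking the bookkeeping in the $(v^0,v^j)$ expansion and confirming that the $Y_j(f_l)$ terms vanish by $G$-invariance; this is exactly the reason the bilinear form $\Sigma$ was defined the way it was in Definition~\ref{Def:SigmaConnection}, so the cancellation is built in by design and the argument is essentially a verification rather than a discovery.
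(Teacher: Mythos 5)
Your proposal is correct and follows essentially the same route as the paper: expand $\overset{\textit{\tiny{$\Sigma$}}}{\nabla}_{\dot\gamma}\zeta$ in the $G$-invariant basis, observe that the Christoffel symbols of the $\Sigma$-connection reduce to $\Gamma_{0j}^l=-R_{lj}$ and $\Gamma_{ij}^l=0$ by construction, use $G$-invariance of the $f_i$ to kill the $Y_j(f_l)$ terms, and identify the resulting condition (where $v^0\neq 0$) with the ODE system \eqref{Eq:System} characterizing horizontal gauge momenta via Theorem~\ref{T:Main}. The only cosmetic difference is that the paper invokes Theorem~\ref{T:Main} directly rather than retracing Lemma~\ref{L:MomEq-Coord}, and it is slightly more terse about the existence of the parallel section, which you handle explicitly.
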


\begin{proof} 
Denote by $\mathfrak{B}_{\g_S}=\{\xi_1,...\xi_k\}$   a global $G$-invariant basis  
of the bundle $\g_S\to Q$ and then a $G$-invariant section $\zeta$ of $\g_S$ 
is written as $\zeta = f_j \xi_j$ for $f_j\in C^\infty(Q)^G$.  
Since $\dot \gamma(t)= T\tau_\subM(X_\nh) = v^0 X_0 + v^iY_i$, then 
$(T\tau_\subM X_\nh)(f_l) = v^0X_0(f_l)$ and 
\begin{equation*}
  \overset{\textit{\tiny{$\Sigma$}}}{\nabla}_{\dot \gamma(t)}\zeta =  f_j \overset{\textit{\tiny{$\Sigma$}}}{\nabla}_{\dot \gamma(t)} \xi_j + T\tau_\subM (X_\nh)(f_l) \xi_l   = v^0 (  f_j \Gamma_{0j}^l + X_0(f_l) ) \xi_l + v^i f_j\Gamma_{ij}^l \xi_l,
\end{equation*}
where $\Gamma_{0j}^l, \Gamma_{ij}^l$ are the Christoffel symbols of 
$\overset{\textit{\tiny{$\Sigma$}}}{\nabla}$ in the basis \eqref{Eq:BasisParallel}, 
i.e., $\overset{\textit{\tiny{$\Sigma$}}}{\nabla}_{X_0}\, \xi_j = \Gamma_{0j}^l \xi_l$ 
and $\overset{\textit{\tiny{$\Sigma$}}}{\nabla}_{Y_i}\, \xi_j = \Gamma_{ij}^l \xi_l$.
By the Def.~\ref{Def:SigmaConnection}, $\Gamma_{0j}^l = \hat\Gamma_{0j}^l + 
\Sigma^l(X_0, (\xi_j)_Q)$ and $\Gamma_{ij}^l = \hat\Gamma_{ij}^l + \Sigma^l((\xi_i)_Q, (\xi_j)_Q)$. 
Then $\Gamma_{0j}^l = \hat\Gamma_{0j}^l + \Sigma^l_{0j} = -R_{lj}$ and $\Gamma_{ij}^l = \hat\Gamma_{ij}^l + \Sigma^l_{ij} = 0$.
We conclude that $\overset{\textit{\tiny{$\Sigma$}}}{\nabla}_{\dot \gamma(t)}\zeta = 0$ 
if and only if the functions $(f_1,...,f_k)$ are a solution of the system $- R_{lj}f_j + X_0(f_l) = 0$, 
which means, by Theorem \ref{T:Main}, that $\zeta= f_i\xi_i$ is a horizontal gauge symmetry.  
Observe that we are assuming that $v^0\neq 0$ which is true except in a measure zero set. 
\end{proof}

\noindent{\bf Guiding Example: nonholonomic oscillator.} 
Let us continue with the example describing the nonholonomic oscillator studied 
in Section \ref{Ss:MomEq-Coord}, but in this case, we will consider 
$W = S^\perp \cap V = \textup{span}\{ Z:= -y \frac{\partial}{\partial x} + \frac{\partial}{\partial z}\}$ 
and we recall that $H = \textup{span}\{ X_0:=  \frac{\partial}{\partial y}\}$ and 
$S = \textup{span}\{ Y:= \frac{\partial}{\partial x} + y \frac{\partial}{\partial z}\}$.
Denoting by $\xi = (1,y)$ the $G$-invariant generator of $\Gamma(\g_S)$, 
the Christoffel symbols of $\widehat\nabla$ are given by 
\[
\widehat{\nabla}_{X_0}\xi = \widehat{\Gamma}_{\mbox{\tiny{$0Y$}}}^{\mbox{\tiny{$Y$}}} \xi =  
\tfrac{y}{1+y^2} \xi \quad \mbox{and} \quad \widehat{\nabla}_{Y}\xi = 
\widehat{\Gamma}_{\mbox{\tiny{$YY$}}}^{\mbox{\tiny{$Y$}}} \xi = 0.
\]
Therefore, we observe that $\overset{\textit{\tiny{$\Sigma$}}}{\mathcal{\nabla}} = \widehat\nabla$ 
since, using Def.~\ref{Def:SigmaConnection}, the  $\g$-valued bilinear form  
$\Sigma = \Sigma^{\mbox{\tiny{$Y$}}} \otimes \xi = 0$, where  
\[
\Sigma^{\mbox{\tiny{$Y$}}} = -(\widehat \Gamma_{\mbox{\tiny{$0Y$}}}^{\mbox{\tiny{$Y$}}} +R_{\mbox{\tiny{$YY$}}}) dy \otimes (\tfrac{1}{1+y^2}(dx+ydz)) - \widehat\Gamma_{\mbox{\tiny{$YY$}}}^{\mbox{\tiny{$Y$}}} \tfrac{1}{(1+y^2)^2}(dx+ydz) \otimes (dx+ydz) =0.
\]   
Following Theorem \ref{T:ParallelTransport}, $\zeta= f(y)\xi$ is a $G$-invariant horizontal 
gauge symmetry if and only if $\widehat{\nabla}_{\dot \gamma} \zeta = 0$.

\section{Existence of horizontal gauge momenta and related consequences on the dynamics and geometry of the systems}\label{Sec:Consequences}

\subsection{Integrability and hamiltonization of the reduced dynamics} \label{Ss:BroadHamiltonization}
As we saw in Section~\ref{Ss:InitialSetting}, a nonholonomic system $(\cM,\Omega_\subM|_\C,H_\subM)$ 
with a $G$-symmetry can be reduced to the quotient manifold $\M/G$ and the reduced dynamics 
is given by integral curves of the vector field $X_{\red}$ 
on $\M/G$ defined in \eqref{Eq:RedDyn}. Moreover, since the hamiltonian function 
$H_\subM$ on $\M$ is $G$-invariant as well, it descends to a {\it reduced hamiltonian 
function} $H_\red$ on the quotient $\M/G$, i.e., $H_\subM = \rho^*H_\red$, 
and as expected, it is a first integral of $X_\red$. 
The following Lemma will be used in the subsequence subsections.
\begin{lemma}\label{L:dimM/G} If  $(\M, \Omega_\subM|_\C, H_\subM)$ is a nonholonomic system 
with a $G$-symmetry satisfying Conditions $(\cA1)$, $(\cA2)$ and $(\cA4)$ 
then $\textup{dim}(\M/G) = k+2$, where $k = \textup{rank}(S)$. 
\end{lemma}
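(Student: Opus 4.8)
The statement is a dimension count, so the strategy is to compute $\dim(\M/G)$ directly from $\dim\M$ and $\dim G$ (the latter equals the generic orbit dimension since the action is free and proper). First I would compute $\dim\M$. Recall $\M = \mathrm{Leg}(D)$ is a subbundle of $T^*Q$ whose fibers are linearly isomorphic to the fibers of $D$, so $\dim\M = \dim Q + \mathrm{rank}(D)$. By the dimension assumption $(\cA1)$, $T_qQ = D_q + V_q$, and writing $S = D\cap V$ we get $\mathrm{rank}(D) + \dim V_q = \dim Q + \mathrm{rank}(S)$; since the action is free (implicitly available here, or else one restricts to where it is — though actually freeness is $(\cA3)$, not assumed in this lemma, so I must be slightly careful and use only that $\dim V_q = \dim G$ generically, which holds for a proper action with the stated conventions), $\dim V_q = \dim G =: \ell$. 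Hence $\mathrm{rank}(D) = \dim Q + k - \ell$ and $\dim\M = 2\dim Q + k - \ell$.

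**Bringing in $(\cA4)$.** Condition $(\cA4)$ says $\dim(Q/G) = 1$, and combined with the fact that $V$ has rank $\ell$ (properness plus the conventions of the paper), this gives $\dim Q = \ell + 1$. Substituting, $\dim\M = 2(\ell+1) + k - \ell = \ell + k + 2$. Since $G$ acts on $\M$ with the same orbit dimension $\ell$ (the lifted action on $T^*Q$ preserving $\M$ has orbits of dimension $\dim G$ under the same regularity hypotheses — this is where one uses that $\M\to Q$ is $G$-equivariant and the base action has $\ell$-dimensional orbits), the quotient $\M/G$ is a manifold of dimension $\dim\M - \ell = (\ell + k + 2) - \ell = k + 2$, as claimed.

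**Main obstacle.** The only delicate point is justifying that $\dim V_q$ and the $\M$-orbit dimension both equal $\dim G$ \emph{without} invoking $(\cA3)$, since the lemma's hypotheses list only $(\cA1),(\cA2),(\cA4)$. The cleanest route is to observe that throughout the paper the standing convention (stated in the Introduction) is that the action is free and proper, or that one restricts to the open dense submanifold where it is; on that submanifold $q\mapsto \eta_Q(q)$ is injective for $\eta\neq 0$, so $\dim V_q = \dim G$, and the lifted action on $\M$ inherits freeness, giving orbit dimension $\dim G$ as well. I would state this restriction explicitly at the start of the proof and then the count above goes through verbatim. (Condition $(\cA2)$, triviality of $\g_S\to Q$, is not actually needed for the dimension count and is presumably listed only because the lemma is stated in a context where it is assumed; I would not use it.)
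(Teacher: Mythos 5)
Your proof is correct and follows essentially the same route as the paper's: a direct dimension count giving $\dim\M=\dim Q+\mathrm{rank}(D)$ and $\mathrm{rank}(D)=k+1$, the only cosmetic difference being that the paper reads off $\mathrm{rank}(D)=\mathrm{rank}(H)+\mathrm{rank}(S)$ from the splitting $D=H\oplus S$ while you obtain it by inclusion--exclusion from the dimension assumption $T_qQ=D_q+V_q$. Your explicit remarks that $(\cA2)$ is not actually used and that freeness of the action (needed for $\dim V_q=\dim G$ and for $\M/G$ to be a manifold) must be supplied by the paper's standing convention are both accurate and address points the paper's own proof leaves implicit.
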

\begin{proof}
From \eqref{Eq:TQ=H+S+W}, we have that $D = H\oplus S$ and thus we observe 
that $\textup{rank}(D) =k+1$, since $\textup{rank}(H) = \textup{dim}(Q/G) = 1$ 
and $\textup{rank}(S) =k$.  Then $\textup{dim}(\M) = \textup{dim}(Q) + \textup{rank}(D)$ 
and hence, since $G$ acts on $T^*Q$ by the lifted action,  $\textup{dim}(\M/G) 
= \textup{dim}(Q/G) + \textup{rank}(D) = k+2$.
\end{proof}

\subsubsection*{Integrability of the reduced system}

In this Section, we recall the concept of `broad integrability' and we 
show that the reduced dynamics $X_\red$ on $\M/G$ of a nonholonomic system 
$(\cM,\Omega_\subM|_\C,H_\subM)$ with a
$G$-symmetry satisfying the hypotheses of Theorem~\ref{T:Main}, 
is integrable by quadratures or geometric integrable,\footnote{We 
recall that integrability by quadratures is also called {\it geometric integrability}, see \cite{SVN}.} 
and if some compactness hypothesis are satisfied it is also `broadly integrable'.  
In order to perform our analysis we identify broad integrability,
which extends complete, or better non-commutative, integrability outside the Hamiltonian framework, 
with quasi-periodicity of the dynamics.
We base our analysis on the characterization of quasi-periodicity outside the hamiltonian framework, 
introduced in \cite{bogoyavlenskij} (see also \cite{fedorov,FG2002,zung2016}). 

\begin{definition}\label{Def:BroadInt}
 A vector field $X$ on a manifold $M$ of dimension $n$, is called {\it broad integrable}, if
  \begin{enumerate}
    \item[$(i)$] there exists a submersion $F=(f_1,\ldots, f_{n-d}): M\longrightarrow \bR{n-d}$
    with compact and connected level sets, whose components $f_1,\ldots, f_{n-d}$ are
    first integrals of $X$, i.e. 
    $X(f_i) = 0$, for all  $i=1,\ldots, n-d$;
    \item[$(ii)$] there exists $d$ linearly independent vector fields, $Y_1,\ldots, Y_d$ on $M$ 
    tangent to the level sets of the first integrals (i.e., $Y_\alpha(f_i) = 0 $ 
    for all $\alpha = 1,\ldots, d$ and for all $i=1,\ldots,n-d$) 
    that pairwise commute and commute with $X$.\footnote{We recall that the vector fields $Y_1,\ldots, Y_d$ 
    are also called {\it dynamical symmetries} of $X$.}
  \end{enumerate}
\end{definition}

As in the hamiltonian case, being broad integrable, has important consequences 
in the characterization of the dynamics and the geometry of the phase space:

\begin{theorem}[\cite{bogoyavlenskij,fedorov,zung2016}] \label{T:B-integrability} 
  Let $M$ be a manifold of dimension $n$.  If the vector field $X$ on $M$ is broad integrable, then 
  \begin{itemize}
    \item[{\it (i)}] for each $c\in\bR{n-d}$, the level sets $F^{-1}(c)$ of $F$ on $M$ are diffeomorphic to 
    $d$--dimensional tori;
    \item[{\it (ii)}] the flow of $X$ is conjugated to a linear flow on the fibers of $F$. Precisely,
    for each $c\in\bR{n-d}$, there exists a neighbourhood $\cU$ of $F^{-1}(c)$ in $M$
    and a diffeomorphism 
    \[\begin{aligned}
     \Phi: & \; \cU\longrightarrow F(\cU)\times \bT{d} \\
       & m \longrightarrow \Phi(m) = (F(m),\varphi(m))
    \end{aligned}\]
    which conjugate the flow of $X$ on $\cU$ to the linear flow
    \[
      \dot F = 0\,,\qquad  \dot\varphi = \omega(F)\,;
    \]
    on $F(\cU)\times\bT{d}$, for certain functions $\omega_i:F(\cU)\longrightarrow\bR{}$.
  \end{itemize}
\end{theorem}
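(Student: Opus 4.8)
The plan is to run the classical Arnold--Liouville argument in the non-Hamiltonian setting, using the $\bR{d}$-action generated by the commuting dynamical symmetries. Fix $c\in\bR{n-d}$ and set $N_c:=F^{-1}(c)$; since $F$ is a submersion, $N_c$ is an embedded $d$-dimensional submanifold, compact and connected by hypothesis, and since each $Y_\alpha$ is tangent to the level sets and the $Y_\alpha$ are pointwise linearly independent, $Y_1,\ldots,Y_d$ form a frame of $TN_c$. Compactness of $N_c$ makes the flows of the $Y_\alpha$ complete, and $[Y_\alpha,Y_\beta]=0$ makes them commute, so $\phi\colon\bR{d}\times N_c\to N_c$, $\phi(t_1,\ldots,t_d,p):=\phi^1_{t_1}\circ\cdots\circ\phi^d_{t_d}(p)$, is a smooth action of the abelian group $\bR{d}$ on $N_c$.

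First I would prove $(i)$. The action $\phi$ is locally transitive, since the infinitesimal generators $Y_\alpha$ span the tangent space at every point; with $N_c$ connected this upgrades to transitivity, so $N_c\cong\bR{d}/\Gamma_c$, where $\Gamma_c$ is the isotropy subgroup of a chosen base point $m_0\in N_c$. Pointwise independence of the $Y_\alpha$ makes the action locally free, so $\Gamma_c$ is a discrete subgroup of $\bR{d}$, hence a lattice $\bZ{m}$ with $m\le d$; compactness of $N_c=\bR{d}/\Gamma_c$ forces $m=d$, whence $N_c\cong\bR{d}/\bZ{d}=\bT{d}$.

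For the linearization of the dynamics on $N_c$: since $X(f_i)=0$, the field $X$ is tangent to $N_c$, so $X|_{N_c}=b^\alpha Y_\alpha$ for smooth functions $b^\alpha$ on $N_c$; then $0=[X,Y_\beta]=-Y_\beta(b^\alpha)\,Y_\alpha$ (using $[X,Y_\beta]=0$ and $[Y_\alpha,Y_\beta]=0$) forces $Y_\beta(b^\alpha)=0$, so by connectedness the $b^\alpha$ are constants. Hence, under the identification $N_c\cong\bR{d}/\Gamma_c$, the field $X$ is the translation-invariant field $b^\alpha Y_\alpha$, i.e.\ a linear flow; choosing a basis $e_1,\ldots,e_d$ of $\Gamma_c$ and the linear isomorphism of $\bR{d}$ taking $e_i$ to the standard basis provides angle variables $\varphi\in\bT{d}$ in which $\dot\varphi=\omega(c)$ for constants $\omega_i(c)$ built from the $b^\alpha$ and the lattice basis.

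To globalize $(ii)$ over a neighbourhood $\cU$ of $N_c$: take a transversal $\Sigma$ to the level sets through $m_0$ with $F|_\Sigma$ a diffeomorphism onto a neighbourhood $F(\cU)$ of $c$, and write $m_0(c')\in\Sigma$ for the point over $c'$. The $\bR{d}$-action extends fibrewise along the level sets, and $(c',t)\mapsto\phi_t(m_0(c'))$ is a submersion onto $\cU$ whose fibres are the orbits, i.e.\ the tori $N_{c'}$. The period lattices $\Gamma_{c'}$ vary smoothly with $c'$ — proved by the implicit function theorem, tracking the isolated (by local freeness) solutions of $\phi_t(m_0(c'))=m_0(c')$ starting from a basis of $\Gamma_c$ — so, shrinking $\cU$, one obtains a smooth frame $e_1(c'),\ldots,e_d(c')$ of $\Gamma_{c'}$, and $\Phi^{-1}(c',\varphi):=\phi_{\varphi^i e_i(c')}(m_0(c'))$ is the sought diffeomorphism $F(\cU)\times\bT{d}\to\cU$, conjugating $X$ to $\dot F=0$, $\dot\varphi=\omega(F)$ with $\omega$ smooth in $F$. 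I expect the smooth, single-valued choice of the period-lattice frame over $F(\cU)$ to be the only genuinely delicate step; the rest is the standard Arnold--Liouville mechanism transplanted to the non-Hamiltonian context.
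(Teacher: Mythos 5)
The paper does not prove this statement: it is quoted from the literature (Bogoyavlenskij, Fedorov, Zung) and used as a black box, so there is no in-paper proof to compare against. Your argument is correct and is essentially the standard non-Hamiltonian Liouville--Arnold proof given in those references: the commuting frame $Y_1,\ldots,Y_d$ generates a transitive, locally free $\bR{d}$-action on each compact connected fibre, forcing $F^{-1}(c)\cong\bT{d}$; the relations $[X,Y_\beta]=0$ make the coefficients of $X$ in that frame constant on each fibre, hence the flow is linear; and the smooth dependence of the period lattice on $c$ (via the implicit function theorem, valid after shrinking to a neighbourhood of one fibre) yields the semi-global conjugation $\Phi$. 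You have correctly identified the one delicate point, namely the smooth single-valued choice of lattice basis over $F(\cU)$, and your treatment of it is adequate for the local statement being proved.
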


Now, we go back to our nonholonomic system $(\cM,\Omega_\subM|_\C,H_\subM)$ with a $G$-symmetry.
If we assume that the hypotheses of Theorem~\ref{T:Main} are satisfied, then the nonholonomic 
system admits $k=\textup{rank}(S)$ (functionally independent) $G$-invariant horizontal 
gauge momenta. 
This fact, plus recalling that $H_\red$ is a first integral of $X_\red$ and the fact that reduced 
manifold $\M/G$ has dimension $k+2$,  ensures that the reduced 
dynamics $X_\red$ is integrable by quadratures. 
Moreover, if the joint level sets of the first integrals are connected and compact
the reduced dynamics satisfies the hypothesis of Theorem~\ref{T:B-integrability}  
and it is then broad integrable on circles.
We can summarize these integrability issues as follows. 

\begin{theorem} \label{T:reduced-integrability}
Consider  a nonholonomic system  $(\M, \Omega_\subM, H_\subM)$ with a $G$-symmetry 
satisfying Conditions $(\cA1)$-$(\cA4)$.
If the hypotheses of Theorem~\ref{T:Main} are fulfilled, then
\begin{enumerate}
  \item[$(i)$] The vector field $X_{\emph\red}$ admits $k+1$ (functionally independent) first integrals 
  $\{\bar{\mathcal J}_1,\ldots,\bar{\mathcal J}_k, H_{\emph\red}\}$
  on $\M/G$, where  $H_{\emph\red}$ is the reduced hamiltonian;
  \item[$(ii)$] The map $F=(\bar{\mathcal J}_1,\ldots,\bar{\mathcal J}_k,H_{\emph\red}):\M/G 
  \longrightarrow \bR{k+1}$ is a surjective submersion. 
  The non equilibrium orbits of the reduced dynamics $X_{\emph\red}$ 
  are given by the joint level sets of $(\bar{\mathcal J}_1,\ldots,\bar{\mathcal J}_k, H_{\emph\red})$, 
  and hence the reduced dynamics is integrable by quadratures;
  \item[$(iii)$] If the map $F=(\bar{\mathcal J}_1,\ldots,\bar{\mathcal J}_k, H_{\emph\red}):\M/G 
  \longrightarrow \bR{k+1}$ is proper, then the reduced dynamics 
  is broad integrable and the reduced phase space 
  inherits the structure of a $S^1$-principal bundle.
\end{enumerate}
\end{theorem}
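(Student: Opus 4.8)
The plan is to derive all three items as consequences of Theorem~\ref{T:Main}, Lemma~\ref{L:dimM/G} and Theorem~\ref{T:B-integrability}. For part $(i)$ I would start from the $k=\textup{rank}(S)$ functionally independent $G$-invariant horizontal gauge momenta $\mathcal J^1,\dots,\mathcal J^k\in C^\infty(\M)$ produced by Theorem~\ref{T:Main}. Since these are $G$-invariant first integrals of $X_\nh$, each descends to $\bar{\mathcal J}_l\in C^\infty(\M/G)$ with $\mathcal J^l=\rho^*\bar{\mathcal J}_l$, and because $X_\red=T\rho(X_\nh)$ it is immediate that $\bar{\mathcal J}_l$ is a first integral of $X_\red$; the same reasoning applied to the $G$-invariant hamiltonian $H_\subM=\rho^*H_\red$ gives $H_\red$ as a first integral of $X_\red$. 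Functional independence of $\bar{\mathcal J}_1,\dots,\bar{\mathcal J}_k$ would follow from that of their pullbacks, since $\rho$ is a surjective submersion; the independence of $H_\red$ from the $\bar{\mathcal J}_l$ I would deduce from the fact that the $\mathcal J^l$ are linear on the fibres of $\tau_\subM$ while $H_\subM$ carries the positive-definite kinetic term, so $\{\mathcal J^1,\dots,\mathcal J^k,H_\subM\}$ is independent on an open dense subset of $\M$, hence $\{\bar{\mathcal J}_1,\dots,\bar{\mathcal J}_k,H_\red\}$ is independent on an open dense subset of $\M/G$.

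For part $(ii)$ I would combine the $k+1$ independent first integrals with $\textup{dim}(\M/G)=k+2$ from Lemma~\ref{L:dimM/G}: on the open dense locus of independence the map $F=(\bar{\mathcal J}_1,\dots,\bar{\mathcal J}_k,H_\red)$ has surjective differential, hence is a submersion onto an open subset of $\bR{k+1}$, and surjectivity onto the relevant set of values follows from the ranges of the energy and of the momenta. Since every component of $F$ is a first integral, $X_\red$ is tangent to the fibres of $F$, which are $1$-dimensional; thus each non-equilibrium orbit of $X_\red$ is an open subset of a connected component of a fibre $F^{-1}(c)$, and that component being a connected $1$-manifold on which $X_\red$ does not vanish, the orbit is the whole component. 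Integrability by quadratures then reduces to the elementary quadrature solving a flow along a $1$-dimensional level set.

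For part $(iii)$, properness of $F$ forces the fibres to be compact; restricting to the locus where $F$ is a submersion and $X_\red\neq 0$, every fibre component is a compact connected boundaryless $1$-manifold, hence a circle. Therefore $X_\red$ is broad integrable in the sense of Definition~\ref{Def:BroadInt} with $d=1$ (one may take $Y_1=X_\red$ itself as the trivially self-commuting dynamical symmetry), and Theorem~\ref{T:B-integrability} yields that the flow is conjugate to a linear flow on each circular fibre with frequency depending smoothly on $F$; normalising this flow to period $2\pi$ exhibits the non-equilibrium part of $\M/G$ as an $S^1$-principal bundle over the corresponding set of regular values of $F$.

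The step I expect to be the main obstacle is the \emph{joint} functional independence of the horizontal gauge momenta together with $H_\red$ --- equivalently, the submersion property of $F$ --- and, relatedly, pinning down precisely the open set (away from equilibria of $X_\red$ and from the boundary stratum of $\M$) on which the conclusions hold and on which the $S^1$-bundle structure is built; once the level sets are known to be circles, the remaining assertions are routine applications of Theorem~\ref{T:B-integrability}.
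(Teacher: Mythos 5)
Your proposal is correct and follows essentially the same route as the paper: counting the $k+1$ functionally independent first integrals against $\dim(\M/G)=k+2$ from Lemma~\ref{L:dimM/G} and then invoking Definition~\ref{Def:BroadInt} and Theorem~\ref{T:B-integrability} for items $(ii)$ and $(iii)$. The paper's own proof is in fact far terser (it simply asserts these steps), so your elaboration --- in particular the argument for the joint independence of $H_\red$ with the $\bar{\mathcal J}_l$ via linearity on the fibres versus the quadratic kinetic term --- supplies details the paper leaves implicit.
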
 

\begin{proof} 
Given that $\dim \M/G = k+2$ and that we have $k+1$ (functionally independent) 
   first integrals of the reduced dynamics $X_\red$, namely the $k$ horizontal gauge momenta
   $\bar \cJ_1,\ldots,\bar \cJ_k$ from Theorem~\ref{T:Main} and the reduced Hamiltonian $H_\red$,
   the reduced dynamics is integrable by quadratures.   
   Items $(ii)$ and $(iii)$ follow immediately from Definition~\ref{Def:BroadInt} and Theorem~\ref{T:B-integrability}.
\end{proof}





\subsubsection*{Hamiltonization}\label{Ss:Hamiltonization}
The non-hamiltonian character of a nonholonomic system can also be seen by the fact that the dynamics  is not described by a symplectic form or a Poisson bracket. 
More precisely, as we have seen in Section \ref{Ss:InitialSetting}, the restriction of the 2-form $\Omega_\subM$ on the distribution $\C$ is nondegenerate and hence it 
allows to define the {\it nonholonomic bracket} $\{\cdot, \cdot\}_\nh$ on functions on $\M$ 
(see \cite{ShaftMashke,Marle98,IdLMM}), given, for each $f\in C^\infty(\M)$, by
\begin{equation}\label{Eq:NHBracket}
X_f = \{\cdot, f\}_\nh \mbox{ \ if and only if \ } {\bf i}_{X_f}\Omega_\subM|_\C = df |_\C,
\end{equation}
where $(\cdot)|_\C$ denotes the point-wise restriction to $\C$. The nonholonomic bracket is an {\it almost Poisson bracket} on $\M$ 
(see Appendix \ref{A:Hamiltonization} for more details) with characteristic distribution given by
the nonintegrable distribution $\C$ and we say that it {\it describes the dynamics} since the nonholonomic vector field $X_\nh$ is hamiltonian with respect to the bracket and the hamiltonian function $H_\subM$, i.e., 
\begin{equation}\label{Eq:DynamicsBracket}
X_\nh = \{\cdot, H_\subM\}_\nh.
\end{equation}

In this framework, we use the triple $(\M,\{\cdot, \cdot\}_\nh, H_\subM)$ to define a nonholonomic system. 

If the nonholonomic system admits a $G$-symmetry, then the nonholonomic bracket $\{\cdot,\cdot\}_\nh$ is $G$-invariant and it defines an almost Poisson bracket $\{\cdot, \cdot\}_\red$ on the quotient space $\M/G$ given, for each $\bar{f},\bar{g}\in C^\infty(\M/G)$,  by 
\begin{equation}\label{Eq:RedBracket}
  \{\bar{f},\bar{g}\}_\red \circ \rho(m) = \{ \bar{f}\circ \rho, \bar{g}\circ \rho\}_\nh (m), \qquad m\in\M,
\end{equation}
where $\rho:\M\to \M/G$ is, as usual, the orbit projection (see App.~\ref{A:Hamiltonization}).  
The reduced bracket $\{\cdot, \cdot\}_\red$ describes the reduced dynamics $X_\red$ (defined in \eqref{Eq:RedDyn}) since
\[
X_\red = \{\cdot, H_\red\}_\red.
\]

The {\it hamiltonization problem} studies whether the reduced dynamics $X_\red$ is hamiltonian with respect to a  Poisson bracket on the reduced space $\M/G$ (that might be a different bracket from $\{\cdot, \cdot\}_\red$).  

One of the most important consequences of Theorem \ref{T:Main} is related with the hamiltonization problem as the following theorem shows.

\begin{theorem} \label{T:BalYapu19}
 If a nonholonomic system $(\M, \{\cdot, \cdot\}_{\emph\nh}, H_\subM)$ with a $G$-symmetry verifying 
 Conditions $(\cA 1)$-$(\cA 4)$ satisfies the hypotheses of Theorem \ref{T:Main}, 
 then there exists a $\textup{rank}$ 2-Poisson bracket 
 $\{\cdot, \cdot\}_{\emph\red}^{B_{\emph{\mbox{\tiny{\!H\!G\!M}}}}}$ on $\M/G$  
 describing the reduced dynamics: 
 \[
   X_{\emph\red} = \{\cdot, H_{\emph\red}\}_{\emph\red}^{B_{\emph{\mbox{\tiny{\!H\!G\!M}}}}},
 \]
 for $H_{\emph\red}:\M/G\to \R$ the reduced hamiltonian. 
\end{theorem}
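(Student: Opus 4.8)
The plan is to produce the desired Poisson bracket by a gauge transformation (in the sense of Appendix~\ref{A:Hamiltonization}) of the reduced nonholonomic bracket $\{\cdot,\cdot\}_\red$, using a $2$-form built from the momentum equation. First I would recall that, by Theorem~\ref{T:Main} and Theorem~\ref{T:reduced-integrability}, the reduced manifold $\M/G$ has dimension $k+2$ and carries $k+1$ functionally independent first integrals of $X_\red$, namely the reduced horizontal gauge momenta $\bar{\mathcal J}_1,\dots,\bar{\mathcal J}_k$ together with $H_\red$. Hence the generic level sets of $F = (\bar{\mathcal J}_1,\dots,\bar{\mathcal J}_k,H_\red)$ are one-dimensional, and $X_\red$ is (up to a nonvanishing factor) the unique vector field tangent to them; this is the structural reason a rank-$2$ Poisson description is the natural target. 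The rank-$2$ bracket should then be of the form $\{\cdot,\cdot\}_\red^{B_{\mbox{\tiny{\!H\!G\!M}}}}$ whose symplectic leaves (outside equilibria) are two-dimensional surfaces on which $X_\red$ is Hamiltonian with Hamiltonian $H_\red$.

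Next I would construct the gauge $2$-form. On $\M$, consider the semibasic $2$-form $\langle J,\sigma_{\g_S}\rangle$ introduced in Definition~\ref{Def:Jsigma}; by Lemma~\ref{L:sigma}(ii) it is $G$-invariant (for the $G$-invariant basis $\mathfrak B_{\g_S}$), so it descends to a $2$-form on $\M/G$. Following \cite{BalYapu19}, the dynamical gauge transformation is performed by a suitable multiple/combination of this $2$-form dictated by the horizontal gauge momenta: concretely, using the $k$ coordinate functions $f^l_i$ solving the ODE system \eqref{Eq:ODESystem}, one assembles a $2$-form $B_{\mbox{\tiny{\!H\!G\!M}}}$ on $\M$ (semibasic, $G$-invariant, built from the $J_i$, $d^\C\mathcal Y^i$ and $\mathcal K_\subW$) whose contraction with $X_\nh$ reproduces exactly the correction terms appearing in the momentum equation \eqref{Eq:MomEq}. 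I would then invoke the gauge-transformation machinery of Appendix~\ref{A:Hamiltonization}: the bracket associated to $\Omega_\subM|_\C + B_{\mbox{\tiny{\!H\!G\!M}}}|_\C$ is again an almost-Poisson bracket describing $X_\nh$ (because $B_{\mbox{\tiny{\!H\!G\!M}}}$ is semibasic, adding $\mathbf i_{X_\nh} B_{\mbox{\tiny{\!H\!G\!M}}}|_\C$ does not change the dynamics once one checks $\mathbf i_{X_\nh}B_{\mbox{\tiny{\!H\!G\!M}}}|_\C$ vanishes on $\C$, which is precisely the content of the momentum equation for the chosen $f^l_i$), and it is $G$-invariant, hence descends to a bracket $\{\cdot,\cdot\}_\red^{B_{\mbox{\tiny{\!H\!G\!M}}}}$ on $\M/G$ describing $X_\red$.

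Then I would verify the two remaining claims: that the descended bracket is genuinely Poisson (Jacobi identity) and that it has rank $2$. For the rank: the characteristic distribution of the nonholonomic bracket is $\C$, of corank equal to $\mathrm{rank}(D)=k+1$ on $\M$; after reduction and the gauge transformation one shows the new characteristic distribution on $\M/G$ annihilates exactly $\mathrm{span}\{d\bar{\mathcal J}_1,\dots,d\bar{\mathcal J}_k, dH_\red\}$ minus one relation — i.e. it has rank $2$ — because the horizontal gauge momenta are Casimirs of the new bracket (this is where the choice of $B_{\mbox{\tiny{\!H\!G\!M}}}$ tied to the $f^l_i$ is essential: the gauge transformation is designed to make the $\bar{\mathcal J}_l$ Casimirs, killing $k$ directions of the original rank-$(k+1)$ distribution while leaving a rank-$2$ leaf containing $X_\red$). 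For the Jacobi identity, the cleanest route is to check that the leaves are two-dimensional and that any bivector with two-dimensional characteristic distribution is automatically Poisson (integrability of rank-$2$ bivectors is automatic once the characteristic distribution is integrable, which follows from $\bar{\mathcal J}_l$, $H_\red$ being commuting Casimirs/first integrals); alternatively one cites the corresponding computation in \cite{BalYapu19,GNMontaldi}. The main obstacle I expect is precisely pinning down $B_{\mbox{\tiny{\!H\!G\!M}}}$ so that simultaneously (a) $\mathbf i_{X_\nh} B_{\mbox{\tiny{\!H\!G\!M}}}|_\C = 0$ (dynamics preserved), (b) the $\bar{\mathcal J}_l$ become Casimirs (rank drops to $2$), and (c) $G$-invariance holds — reconciling these three is the technical heart, and it is exactly what the momentum equation \eqref{Eq:MomEq} and the ODE system \eqref{Eq:ODESystem} were set up to deliver, so the argument reduces to bookkeeping once the $2$-form is correctly identified, with the detailed verification deferred to \cite{BalYapu19}.
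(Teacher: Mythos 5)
Your proposal follows essentially the same route as the paper: perform a gauge transformation of the nonholonomic bracket on $\M$ by the semi-basic $G$-invariant $2$-form $B_{\mbox{\tiny{HGS}}}=\langle J,\sigma_{\mbox{\tiny{HGS}}}\rangle$ associated to the basis of horizontal gauge symmetries produced by Theorem~\ref{T:Main}, invoke \cite{BalYapu19} (together with $\textup{rank}(H)=1$) for the fact that this gauge transformation is dynamical, and then reduce, obtaining a Poisson bracket whose rank is $2$ because the $\bar{\mathcal J}_l$ are Casimirs --- exactly the content of Proposition~\ref{Prop:SigmaGauge} and the paper's proof. The only slight imprecision is attributing the dynamical condition ${\bf i}_{X_{\nh}}B_{\mbox{\tiny{HGS}}}|_\C=0$ entirely to the momentum equation (it also uses Condition $(\cA 4)$, i.e.\ $\textup{rank}(H)=1$, via \cite[Cor.~3.13]{BalYapu19}), but since you defer that verification to the same reference the paper cites, this does not constitute a gap.
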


The problem of finding the bracket $\{\cdot, \cdot\}_\red^{B_{\mbox{\tiny{\!H\!G\!M}}}}$, 
once $k$ horizontal gauge momenta exist, 
was already studied in \cite{GNMontaldi,BalYapu19}). 
However here, in the light of the techniques introduced to prove Theorem~\ref{T:Main},
we take a different path to put in evidence the role played by the {\it momentum equation}.  
More precisely, first we study how different choices of a (global $G$-invariant) basis 
$\mathfrak{B}_{\g_S}$ of $\Gamma(\g_S)$ generate different 
rank 2-Poisson brackets on $\M/G$.  If the nonholonomic system admits $k$ (functionally 
independent $G$-invariant) horizontal gauge symmetries then there will be a rank 2-Poisson 
bracket $\{\cdot, \cdot \}_\red^{\B_{\mbox{\tiny{H\!G\!M}}}}$ that describes the dynamics which 
is defined by choosing the basis of $\Gamma(\g_S)$ given by the horizontal gauge symmetries. 
Then we show how $\{\cdot, \cdot\}_{\red}^{B_{\mbox{\tiny{\!H\!G\!M}}}}$  depends on the 
system of differential equations \eqref{Eq:ODESystem}.
For the basic definitions regarding Poisson brackets, bivector fields and gauge 
transformations see Appendix \ref{A:Hamiltonization}.

Let us consider a 2-form $B$ on $\M$ that is semi-basic with respect to the bundle $\tau_\subM:\M\to Q$. The gauge transformation of $\{\cdot, \cdot\}_\nh$ by the 2-form $B$ gives the almost Poisson bracket $\{\cdot,\cdot\}_\B$ defined, at each $f \in C^\infty(\M)$, by
$$
{\bf i}_{X_f}(\Omega_\subM + B)|_\C = df |_\C \quad \mbox{if and only if} \quad X_f = \{\cdot , f \}_\B.
$$


If the 2-form $B$ is $G$-invariant, then the  bracket $\{\cdot, \cdot\}_\B$ is also $G$-invariant and it can be reduced to an almost Poisson bracket $\{\cdot, \cdot\}_{\red}^{\B}$ on the quotient manifold $\M/G$ given, at each $\bar{f}, \bar{g}\in C^\infty(\M/G)$, by
\begin{equation}\label{Eq:RedBracketB}
\{\bar{f},\bar{g}\}^\B_\red \circ \rho(m) = \{ \bar{f}\circ \rho, \bar{g}\circ \rho\}_\B (m),
\end{equation}
where $m\in \M$, Diag.~\eqref{Diag:Hamilt} (see also \cite{GN2008,BN2011}).



Let $\mathfrak{B}_{\g_S}$ be a global $G$-invariant basis of $\Gamma(\g_S)$ and recall from \eqref{Eq:J_i} the associated $G$-invariant momenta $J_i$. 

\begin{proposition}\label{Prop:SigmaGauge}
 Consider a nonholonomic system $(\M, \{\cdot, \cdot\}_{\emph\nh}, H_\subM)$ 
 with a $G$-symmetry satisfying Conditions $(\cA1)$-$(\cA3)$. Given a (global $G$-invariant) basis 
 $\mathfrak{B}_{\g_S}$ of $\Gamma(\g_S)$, the associated 2-form 
 $B_\sigma = \langle J,\sigma_{\g_S}\rangle$  induces a gauge transformation of the nonholonomic 
 bracket $\{\cdot, \cdot\}_{\emph\nh}$ so that 
 \begin{itemize}
  \item[$(i)$] the gauge related bracket $\{\cdot, \cdot\}_{B_\sigma}$ on $\M$ is $G$-invariant;
  \item[$(ii)$] The induced reduced bracket $\{\cdot, \cdot\}_{\emph\red}^{B_\sigma}$ on $\M/G$ 
  is Poisson with symplectic leaves given by the common level sets of the momenta $\bar{J}_i$, 
  where $\bar{J}_i \in C^\infty(\M/G)$ so that $\rho^*\bar{J}_i = J_i$.  In particular, if Condition 
  $(\cA 4)$ is satisfied, then the Poisson bracket $\{\cdot, \cdot\}_{B_\sigma}$ has 2-dimensional leaves.
 \end{itemize}
\end{proposition}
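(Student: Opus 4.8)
The plan is to exploit the fact, established in the proof of Proposition~\ref{Prop:MomEq1}, that for the semi-basic 2-form $B_\sigma = \langle J, \sigma_{\g_S}\rangle$ one has the identity
\[
(\Omega_\subM + B_\sigma)(\mathcal{Y}_i, \mathcal{X}) = dJ_i(\mathcal{X})
\]
for every $T\rho$-projectable vector field $\mathcal{X}$ on $\M$, where $\mathcal{Y}_i = (\xi_i)_\subM$. This says that the functions $J_i$ become \emph{Casimir-like} for the gauge-transformed structure in the directions along the $G$-orbit: the 2-section $(\Omega_\subM+B_\sigma)|_\C$ paired with the vertical vectors $\mathcal{Y}_i$ reproduces $dJ_i$. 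First I would record item $(i)$: since $\mathfrak{B}_{\g_S}$ is $G$-invariant, Lemma~\ref{L:sigma}$(ii)$ gives that $B_\sigma = \langle J,\sigma_{\g_S}\rangle$ is $G$-invariant; together with the $G$-invariance of $\Omega_\subM$ and of $\C$ this makes $\{\cdot,\cdot\}_{B_\sigma}$ a $G$-invariant almost Poisson bracket on $\M$, hence reducible to $\{\cdot,\cdot\}^{B_\sigma}_\red$ on $\M/G$ via \eqref{Eq:RedBracketB}.

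For item $(ii)$, the key is to show that $\{\cdot,\cdot\}_{B_\sigma}$ is genuinely Poisson (Jacobi holds) and that its characteristic leaves are the common level sets of the $J_i$. For the leaf structure: the bracket $\{\cdot,\cdot\}_{B_\sigma}$ has characteristic distribution $\C$ (a gauge transformation by a semi-basic 2-form does not change the characteristic distribution, cf. Appendix~\ref{A:Hamiltonization}); I would show instead that the $J_i$ are Casimirs of $\{\cdot,\cdot\}_{B_\sigma}$. Concretely, $\{f, J_i\}_{B_\sigma} = 0$ for all $f$ is equivalent to $X_{J_i} = 0$, i.e. ${\bf i}_{X_{J_i}}(\Omega_\subM+B_\sigma)|_\C = dJ_i|_\C$ is solved by $X_{J_i}=0$; but from the displayed identity, $dJ_i|_\C = {\bf i}_{\mathcal{Y}_i}(\Omega_\subM+B_\sigma)|_\C$ only when $\mathcal{Y}_i\in\C$, which need not hold, so more care is needed --- one must instead argue that $dJ_i$ annihilates the characteristic distribution of the \emph{reduced} bracket. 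The cleaner route: the reduced bracket $\{\cdot,\cdot\}^{B_\sigma}_\red$ has characteristic distribution $T\rho(\C)$, which by the dimension assumption has rank $\dim(\M/G) - k$ minus the rank of $\C\cap\V$... I would compute ranks carefully using $\C = \mathcal{H}\oplus\S$ pulled to $\M$, showing $T\rho(\C)$ has corank $k$ in $T(\M/G)$ and that $d\bar J_1,\dots,d\bar J_k$ span its annihilator, so the leaves are exactly $\{\bar J_i = \text{const}\}$; when $(\cA4)$ holds, $\dim(\M/G) = k+2$ by Lemma~\ref{L:dimM/G}, so the leaves are $2$-dimensional.

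The main obstacle will be verifying the \textbf{Jacobi identity} for $\{\cdot,\cdot\}_{B_\sigma}$ (equivalently, that the gauge transformation produces a Poisson and not merely almost-Poisson or twisted-Poisson bracket). The standard criterion is that a gauge transformation by $B$ of a Poisson/almost-Poisson structure $\pi$ is Poisson iff $dB$ vanishes on the characteristic distribution after the appropriate contraction --- here one needs $d B_\sigma$ to restrict suitably to $\C$, or rather one needs the reduced $2$-form on each leaf to be closed. Rather than computing $d\langle J,\sigma_{\g_S}\rangle$ head-on, I would instead appeal to the identification already present in the literature: as remarked after Definition~\ref{Def:Jsigma}, this exact $2$-form appeared in \cite{BalYapu19}, and the argument there (or a direct check that on each common level set $\{J_i = c_i\}$ the induced $2$-section $(\Omega_\subM + B_\sigma)|_\C$ descends to a symplectic form, using the momentum identity above to control the degenerate directions) gives the Poisson property. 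So the structure of the proof is: $(i)$ $G$-invariance via Lemma~\ref{L:sigma}; reducibility; then for $(ii)$ a rank count identifying the leaves with $\{\bar J_i=\text{const}\}$, plus the closedness/Jacobi verification on leaves via the momentum identity $(\Omega_\subM+B_\sigma)(\mathcal{Y}_i,\cdot) = dJ_i$, and finally the dimension count from Lemma~\ref{L:dimM/G} under $(\cA4)$ to get $2$-dimensional leaves.
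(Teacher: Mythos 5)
Your proposal ends up where the paper does: item $(i)$ is exactly the paper's argument (semi-basicness of $B_\sigma$ so that $(\Omega_\subM+B_\sigma)|_\C$ is nondegenerate, plus $G$-invariance from Lemma~\ref{L:sigma}$(ii)$), and for item $(ii)$ the paper's entire proof is the citation you eventually fall back on, namely \cite[Prop.~3.9]{BalYapu19} — the rank count and the Jacobi verification are not carried out in the paper either. So in substance you have reproduced the proof.

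Two corrections to the exploratory middle of your argument, which would matter if you tried to make item $(ii)$ self-contained. First, your worry that ``$\mathcal{Y}_i\in\C$ need not hold'' is unfounded: by definition of $\g_S$ one has $(\xi_i)_Q\in\Gamma(S)\subset\Gamma(D)$, hence $\mathcal{Y}_i=(\xi_i)_\subM\in\Gamma(\S)=\Gamma(\C\cap\V)\subset\Gamma(\C)$. Since the identity $(\Omega_\subM+B_\sigma)(\mathcal{Y}_i,\mathcal{X})=dJ_i(\mathcal{X})$ holds for all projectable $\mathcal{X}$, both sides are tensorial in $\mathcal{X}$, and projectable vector fields span each tangent space, it holds as an identity of $1$-forms on $\M$; restricting to $\C$ and using nondegeneracy of $(\Omega_\subM+B_\sigma)|_\C$, the $\{\cdot,\cdot\}_{B_\sigma}$-hamiltonian vector field of $J_i$ is (up to sign) $\mathcal{Y}_i$, which is vertical, so $\bar J_i$ is a Casimir of $\{\cdot,\cdot\}_\red^{B_\sigma}$. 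This is precisely the mechanism the paper uses in the proof of Corollary~\ref{C:HorSym2}, and it replaces your rank count: it immediately bounds the characteristic distribution by the corank-$k$ distribution tangent to the level sets of $(\bar J_1,\dots,\bar J_k)$. Second, the characteristic distribution of the reduced bracket is \emph{not} $T\rho(\C)$ — under the dimension assumption $\C+\V=T\M$, so $T\rho(\C)=T(\M/G)$, which is incompatible with having $k$ Casimirs; the correct object is the projection of the hamiltonian vector fields of \emph{basic} functions, i.e.\ $T\rho\bigl(\pi_{B_\sigma}^\sharp(\V^\circ)\bigr)$. With the Casimir statement in hand and Lemma~\ref{L:dimM/G}, Condition $(\cA4)$ forces $2$-dimensional leaves, on which closedness of the leafwise $2$-form (hence Jacobi) is automatic; for the general statement without $(\cA4)$ the Jacobi identity still has to come from \cite{BalYapu19}, exactly as in the paper.
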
 

\begin{proof}
$(i)$ By construction, we see that the 2-form $\langle J, \sigma_{g_S}\rangle$ is semi-basic with respect to the bundle $\M\to Q$ and, by Lemma \ref{L:sigma}, it is $G$-invariant as well. Therefore, the gauge transformation by the 2-form $\langle J, \sigma_{g_S}\rangle$ defines a $G$-invariant almost Poisson bracket  $\{\cdot, \cdot\}_{B_\sigma}$. 

$(ii)$ The $G$-invariant bracket $\{\cdot, \cdot\}_{B_\sigma}$ induces, on the quotient space $\M/G$, 
an almost Poisson bracket $\{\cdot, \cdot\}_\red^{B_\sigma}$. 
It is shown\footnote{In the notation of \cite{BalYapu19}, $B_{\sigma}$ corresponds to the 2-form $B_1$ but for any $G$-invariant basis of $\Gamma(\g_S)$.  The bracket $\{\cdot, \cdot\}_\red^{B_\sigma}$ is denoted by $\{\cdot, \cdot\}_\red^1$ in the cited reference.}  in \cite[Prop.3.9]{BalYapu19} that $\{\cdot, \cdot\}_\red^{B_\sigma}$ is a Poisson bracket with symplectic leaves given by the common level sets of the momenta 
$\bar{J}_i \in C^\infty(\M/G)$.
\end{proof}
 
Note that the reduced nonholonomic vector field $X_\red$ might not be tangent to the foliation of the bracket $\{\cdot, \cdot\}_\red^{B_\sigma}$.

\begin{definition}\label{Def:Hamiltonization}
 We say that a nonholonomic system $(\M, \{\cdot,\cdot\}_\nh, H_\subM)$ with a $G$-symmetry is {\it hamiltonizable by a gauge transformation} if there exists a $G$-invariant 2-form $B$ so that $\{\cdot, \cdot\}_\red^\B$ is Poisson\footnote{In more generality the bracket $\{\cdot, \cdot \}_\red^\B$ can be conformally Poisson} and 
 \begin{equation}\label{Eq:Hamiltonization}
 X_\red = \{\cdot, H_\red\}_\red^\B,
 \end{equation}
  for $H_\red:\M\to \R$ the reduced hamiltonian. 
  \end{definition}

\begin{definition}\label{Def:DynGauge}\cite{BN2011}
A gauge transformation by a 2-form $B$ of the nonholonomic bracket $\{\cdot, \cdot\}_\nh$ is {\it dynamical} if $B$ is semi-basic with respect to the bundle $\M\to Q$ and  ${\bf i}_{X_\nh} B = 0.$ That is, if $B$ induces a bracket $\{\cdot, \cdot\}_{\B}$ that describes the nonholonomic dynamics: $X_\nh = \{\cdot , H_\subM \}_\B.$
\end{definition}

Therefore, once we know that different 2-forms of the type $B_\sigma$ produce different Poisson brackets on the reduced space, we need to find the one that is {\it dynamical}, if it exists.


\medskip

Observe that if the system admits $k$ ($G$-invariant) horizontal gauge momenta, then we have a preferred basis ${\mathfrak B}_{\mbox{\tiny{HGS}}} = \{\zeta_1, ..., \zeta_k\}$ of $\Gamma(\g_S)$ given by the horizontal gauge symmetries.  Let us denote by $\sigma_{\mbox{\tiny{HGS}}}$ the 2-form $\sigma_{\g_S}$ (defined in \eqref{Def:sigma}), computed with respect to the basis ${\mathfrak B}_{\mbox{\tiny{HGS}}}$ and $B_{\mbox{\tiny{HGS}}} : = \langle J, \sigma_{\mbox{\tiny{HGS}}} \rangle$. The proof of Theorem \ref{T:BalYapu19} is based on the following two facts: on the one hand, $B_{\mbox{\tiny{HGS}}}$ defines a dynamical gauge transformation and on the other hand (by Proposition~\ref{Prop:SigmaGauge}) the resulting reduced bracket $\{\cdot, \cdot\}_{\red}^{B_{\mbox{\tiny{H\!G\!M}}}}$ is Poisson.
%
%

\noindent {\it Proof of Theorem \ref{T:BalYapu19}}. 
Under the hypotheses of Theorem \ref{T:Main}, the nonholonomic system admits $k$ $G$-invariant horizontal gauge momenta $\{\mathcal{J}_1,...,\mathcal{J}_k\}$ with the corresponding $G$-invariant horizontal gauge symmetries that generate a basis $\mathfrak{B}_{\mbox{\tiny{HGS}}} =  \{\zeta_1,...\zeta_k\}$ of $\Gamma(\g_S)$. Following \cite[Thm.~3.7]{BalYapu19} and, in particular \cite[Corollary~3.13]{BalYapu19} since $\textup{rank}(H)=1$, the 2-form $ B_{\mbox{\tiny{HGS}}} = \langle J, \sigma_{\mbox{\tiny{HGS}}} \rangle $ associated to the basis $\mathfrak{B}_{\mbox{\tiny{HGS}}}$ induces a dynamical gauge transformation and hence the induced reduced bracket $\{\cdot, \cdot\}_{\red}^{B_{\mbox{\tiny{H\!G\!M}}}}$ describes the reduced dynamics: $X_\red =  \{\cdot, H_\red\}_{\red}^{B_{\mbox{\tiny{H\!G\!M}}}}$. This bracket is then Poisson with symplectic leaves defined by the common level sets  of the horizontal gauge momenta $\{\mathcal{J}_1,...,\mathcal{J}_k\}$ (Proposition~\ref{Prop:SigmaGauge}).

\hfill $\square$

The following diagrams compare Proposition~\ref{Prop:SigmaGauge} with Theorem \ref{T:BalYapu19}.  
The first diagram illustrates the case when we perform a gauge transformation by a 2-form $B_{\sigma}$ (associated to the choice of a basis $\mathfrak{B}_{\g_S}$ of $\Gamma(\g_S)$, Proposition~\ref{Prop:SigmaGauge}) while the second one illustrates the case when the 2-form is $B_{\mbox{\tiny{HGS}}}$ (associated to the basis $\mathfrak{B}_{\mbox{\tiny{HGS}}}$ given by horizontal gauge momenta, Theorem~\ref{T:BalYapu19}).  In both cases, we obtain that the resulting reduced brackets $\{\cdot, \cdot \}_\red^{\B_\sigma}$ and $\{\cdot, \cdot \}_\red^{B_{\mbox{\tiny{H\!G\!M}}}}$ are Poisson.  
However, $\{\cdot, \cdot \}_\red^{\B_\sigma}$ might not describe the reduced dynamics since $B_\sigma$ is not necessarily dynamical. On the other hand, $B_{\mbox{\tiny{HGS}}}$ is always dynamical and thus the reduced bracket $\{\cdot, \cdot \}_\red^{B_{\mbox{\tiny{H\!G\!M}}}}$ describes the dynamics: $X_\red = \{\cdot, H_\red\}_\red^{B_{\mbox{\tiny{H\!G\!M}}}}$.
\begin{equation*}
\xymatrix{  (\M, \{\cdot, \cdot \}_\nh, H_\subM)  \ar[d]_{\mbox{\tiny{reduction} }} \ar[r]^{ \ \ \ \ \ \mbox{\tiny{\begin{tabular}{c} gauge transf \\ by $B_\sigma$ \end{tabular} } } }
&  (\M, \{\cdot, \cdot \}_{\B_\sigma}) \ar[d]\\
 (\M/G, \{\cdot, \cdot \}_\red, H_\red) & (\M/G, \{\cdot, \cdot \}_\red^{\B_\sigma})   }
\qquad \qquad \xymatrix{  (\M, \{\cdot, \cdot \}_\nh, H_\subM)  \ar[d]_{\mbox{\tiny{reduction} }} \ar[r]^{\mbox{\tiny{\begin{tabular}{c} {\it dynamical} gauge transf \\ by $B_{\mbox{\tiny{HGS}}}$ \end{tabular} } } }
&  (\M, \{\cdot, \cdot \}_{B_{\mbox{\tiny{H\!G\!M}}}}, H_\subM) \ar[d]\\
 (\M/G, \{\cdot, \cdot \}_\red, H_\red) & (\M/G, \{\cdot, \cdot \}_\red^{B_{\mbox{\tiny{H\!G\!M}}}}, H_\red)   }
\end{equation*}

\begin{remark}
Under the hypotheses of Theorem \ref{T:BalYapu19}, the functions 
$\{H_\subM, \mathcal{J}_1, ..., \mathcal{J}_k\}$ are in involution with respect to the bracket 
$\{\cdot, \cdot \}_{B_{\mbox{\tiny{H\!G\!M}}}}$, where $\{\mathcal{J}_1, ..., \mathcal{J}_k\}$ 
are the horizontal gauge momenta defined by Theorem~\ref{T:Main}.
In addition, also the reduced functions $\{H_{\red}, \bar{\mathcal{J}}_1, ..., \bar{\mathcal J}_k\}$
 on $\M/G$ are in involution with respect to the reduced bracket  
 $\{\cdot, \cdot \}_{\red}^{B_{\mbox{\tiny{H\!G\!M}}}}$. However these functions are not 
 necessarily in involution with respect to the brackets $\{\cdot, \cdot\}_\nh$ and $\{\cdot, \cdot\}_\red$ respectively. 
\end{remark}

In many cases, the horizontal gauge symmetries cannot be explicitly written, instead they are defined in terms of the solutions of the system of differential equations \eqref{Eq:ODESystem}. Next Theorem gives the formula to write explicitly the dynamical gauge transformation $B_{\mbox{\tiny{HGS}}}$ (and as a consequence the Poisson bracket $\{\cdot, \cdot\}_{\red}^{B_{\mbox{\tiny{H\!G\!M}}}}$) in a chosen basis $\mathfrak{B}_{\g_S}$ that is not necessarily given by the horizontal gauge symmetries.  Examples \ref{Ex:Solids} and \ref{Ex:BallSurface} make explicit the importance of the following formula. 

\begin{theorem}\label{T:FormulaB} Consider a nonholonomic system described by the triple $(\M, \{\cdot, \cdot\}_{\emph\nh}, H_\subM)$ with a $G$-symmetry verifying  Conditions $(\cA 1)$-$(\cA 4)$. Let $\mathfrak{B}_{\g_S} = \{\xi_1,...,\xi_k\}$ be a global $G$-invariant basis of $\Gamma(\g_S)$ and $X_0$ a $\rho$-projectable vector field on $Q$ generating the $S$-orthogonal horizontal space $H$. If the hypotheses of Theorem \ref{T:Main} are satisfied, then the 2-form $B_{\mbox{\tiny{HGS}}}$ 
is written with respect to the basis $\mathfrak{B}_{\g_S}$ as
   \begin{equation} \label{Eq:Bhgm}
   \begin{split}
   B_{\mbox{\tiny{HGS}}}  : = \langle J, \sigma_{\mbox{\tiny{HGS}}} \rangle   & = \langle J, \mathcal{K}_\subW\rangle - \langle J, R_{ij} \mathcal{X}^0 \wedge \mathcal{Y}^j \otimes \xi_j\rangle + \langle J, d\mathcal{Y}^i\otimes \xi_i\rangle, \\ 
    & = p_ad^\C \varepsilon^a - J_i R_{ij} \mathcal{X}^0\wedge \mathcal{Y}^j + J_i d^\C \mathcal{Y}^i,
  \end{split}
 \end{equation} 
  for $R_{ij}$ and $J_i$ the functions defined in \eqref{Eq:R_ij} and \eqref{Eq:J_i} respectively, and $\mathcal{X}^0 = \tau_\subM^*X^0$, $\mathcal{Y}^i = \tau_\subM^*Y^i$, $\varepsilon^a = \tau_\subM^*\epsilon^a$ the corresponding forms on $\M$. 
\end{theorem}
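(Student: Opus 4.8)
The plan is to compute the 2-form $\sigma_{\mbox{\tiny{HGS}}}$ directly from its definition \eqref{Def:sigma}, once the preferred basis $\mathfrak{B}_{\mbox{\tiny{HGS}}} = \{\zeta_1,\dots,\zeta_k\}$ of horizontal gauge symmetries has been expressed in terms of the given $G$-invariant basis $\mathfrak{B}_{\g_S} = \{\xi_1,\dots,\xi_k\}$, and then compare with the expression of $\sigma_{\g_S}$ written in $\mathfrak{B}_{\g_S}$. Write $\zeta_l = f_i^l \xi_i$, so that in matrix form the change of basis is governed by $F = (f_i^l)$, a $\mathrm{GL}(k)$-valued $G$-invariant function on $Q$; the inverse change gives the dual 1-forms $\widetilde{Y}^l = (F^{-1})_{li}\,Y^i$, hence $\widetilde{\mathcal{Y}}^l = \tau_\subM^*\widetilde Y^l$. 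The key point is that the $W$-part of the connection, and hence $\mathcal{K}_\subW$, does not change (the splitting $TQ = H\oplus S\oplus W$ and $\g = \g_S\oplus\g_W$ are fixed), while the $S$-part $A_S = \widetilde{Y}^l\otimes\zeta_l = Y^i\otimes\xi_i$ is basis-independent as a $\g$-valued 1-form, so the only genuine change is in the decomposition of the term $d^\C\mathcal{Y}^i\otimes\xi_i$ versus $d^\C\widetilde{\mathcal{Y}}^l\otimes\zeta_l$.

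Next I would pair with the momentum map. By Definition \ref{Def:Jsigma}, $B_{\mbox{\tiny{HGS}}} = \langle J,\mathcal{K}_\subW\rangle + \widetilde J_l\, d^\C\widetilde{\mathcal{Y}}^l$, where $\widetilde J_l := \langle J^\nh,\zeta_l\rangle = f_i^l J_i$ is the $l$-th horizontal gauge momentum. Now $\widetilde J_l\, d^\C\widetilde{\mathcal{Y}}^l = \widetilde J_l\, d^\C((F^{-1})_{li}\mathcal{Y}^i) = \widetilde J_l (F^{-1})_{li}\, d^\C\mathcal{Y}^i + \widetilde J_l\, d^\C(F^{-1})_{li}\wedge\mathcal{Y}^i$. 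The first term collapses to $J_i\, d^\C\mathcal{Y}^i$ since $\widetilde J_l (F^{-1})_{li} = f_j^l J_j (F^{-1})_{li} = J_i$. For the second term I use that $(f_i^l)$ solves the ODE system \eqref{Eq:ODESystem}: since the $f_i^l$ are $G$-invariant and $\bar X_0$ generates $H$ modulo $V$, we have $d^\C f_i^l = X_0(f_i^l)\,\mathcal{X}^0 + (\text{terms along }\V) = R_{ij}f_j^l\,\mathcal{X}^0 + \dots$, and the $\V$-components are killed upon wedging appropriately inside the semibasic 2-form — more carefully, one differentiates the identity $f_i^l(F^{-1})_{lj} = \delta_{ij}$ to get $d^\C(F^{-1})_{li}$ in terms of $X_0(f^l_j)(F^{-1})$ and then substitutes $X_0(f_j^l) = R_{jm}f_m^l$ from \eqref{Eq:ODESystem}. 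Collecting, the second term becomes $-J_i R_{ij}\,\mathcal{X}^0\wedge\mathcal{Y}^j$ after using $\widetilde J_l (F^{-1})_{lm}\cdots = J_m$ again and the symmetry bookkeeping of indices. Finally $\langle J,\mathcal{K}_\subW\rangle = p_a\, d^\C\varepsilon^a$ in the adapted coordinates of \eqref{Eq:BasisTQ}, since $\mathcal{K}_\subW = \tau_\subM^* K_W = \tau_\subM^*(d^D A_W)$ and in the basis $\{X_0,Y_i,Z_a\}$ one has $A_W = Z^a\otimes\nu_a$ for the corresponding vertical generators, whence pairing with $J$ yields $p_a\,d^\C\varepsilon^a$ with $\varepsilon^a = \tau_\subM^*Z^a$ (renamed $\epsilon^a$). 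Assembling the three pieces gives exactly \eqref{Eq:Bhgm}.

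The main obstacle I anticipate is the careful handling of the term $\widetilde J_l\, d^\C(F^{-1})_{li}\wedge\mathcal{Y}^i$: one must show that only the $\mathcal{X}^0$-component of $d^\C(F^{-1})_{li}$ survives — i.e. that the potential $\mathcal{Y}^j$- and $\mathcal{Z}^a$-components either vanish or wedge to zero against $\mathcal{Y}^i$ inside this semibasic 2-form — and then that substituting the ODE \eqref{Eq:ODESystem} reproduces precisely $-J_i R_{ij}\mathcal{X}^0\wedge\mathcal{Y}^j$ with the correct index placement and sign (note $R_{ij}$ is not symmetric, so the antisymmetrization in $\mathcal{X}^0\wedge\mathcal{Y}^j$ must be tracked honestly). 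A clean way to organize this is to note that $F$ is $G$-invariant so $X_0(F)$ is the only relevant derivative along the dynamics, invoke $d^\C f_i^l = X_0(f_i^l)\mathcal{X}^0 + (\text{something in }\Gamma(\V^*))$ and observe that $\widetilde J_l$ times a $\V^*$-component contracted into the rest is absorbed into the redefinition already accounted for by $\langle J,\mathcal{K}_\subW\rangle$ and $J_i d^\C\mathcal{Y}^i$ — but making this absorption rigorous, rather than hand-wavy, is where the real work lies, and it is essentially the same computation that underlies \cite[Thm.~3.7, Cor.~3.13]{BalYapu19}, which I would cite for the structural identity and reprove here only the part that makes the dependence on $R_{ij}$ (hence on \eqref{Eq:ODESystem}) explicit.
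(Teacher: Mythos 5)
Your proposal is correct and follows essentially the same route as the paper: the paper's proof is exactly the matrix form of your computation, writing $\mathcal{Y}_{\mbox{\tiny{HGS}}}=F^{-1}\mathcal{Y}$, expanding $\langle J, d^\C\mathcal{Y}^i_{\mbox{\tiny{HGS}}}\otimes\zeta_i\rangle = {\bf J}^T F\, d^\C(F^{-1}\mathcal{Y})$, and substituting $X_0(F)=RF$ to produce the term $-J_iR_{ij}\mathcal{X}^0\wedge\mathcal{Y}^j$. The one point you flag as ``where the real work lies'' is in fact immediate: since the $f_i^l$ are $G$-invariant, all derivatives along $\Gamma(V)$ vanish and the $\mathcal{Z}^a$-directions are annihilated by $P_\C$, so $d^\C(F^{-1})_{li}=X_0\bigl((F^{-1})_{li}\bigr)\,\mathcal{X}^0$ exactly, with no residual $\V^*$-components to absorb.
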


\begin{proof}
In order to prove formula \eqref{Eq:Bhgm}, consider the basis $\mathfrak{B}_{\g_S} = \{\xi_1,...,\xi_k\}$ (not necessarily given by horizontal gauge symmetries), and define the corresponding functions $J_i$ as in \eqref{Eq:J_i}.
If we denote by $F$ the fundamental matrix of solutions of the system of ordinary differential equations \eqref{Eq:ODESystem} (i.e., the columns of $F$ are the independent solutions $(f_1^l,...,f_k^l)$) and by $R$ the $k\times k$-matrix with entries $R_{ij}$, then
\begin{equation}\label{Eq:FundMatrix}
R. F = X_0(F) \quad \mbox{and} \quad \mathcal{J} = F^T {\bf J}, \quad \mbox{where} \quad  \mathcal{J} = \left( \! \begin{smallmatrix*}[c]  \mathcal{J}_1\\ \vdots \\ \mathcal{J}_k  \end{smallmatrix*} \! \right) \quad \mbox{and} \quad {\bf J} = \left( \! \begin{smallmatrix*}[c]  J_1\\ \vdots \\ J_k  \end{smallmatrix*} \! \right).
\end{equation}
Moreover, let us denote by ${\mathcal Y}^i_{\mbox{\tiny{HGS}}}$ the 1-forms on $\M$ such that ${\mathcal Y}^i_{\mbox{\tiny{HGS}}}((\zeta_l)_\subM) = \delta_{il}$ and ${\mathcal Y}^i_{\mbox{\tiny{HGS}}} |_{\mathcal{H}} = {\mathcal Y}^i_{\mbox{\tiny{HGS}}}|_{\mathcal W} = 0$. Then if ${\mathcal Y}_{\mbox{\tiny{HGS}}} = ({\mathcal Y}^1_{\mbox{\tiny{HGS}}} , ..., {\mathcal Y}^k_{\mbox{\tiny{HGS}}})^T$ we have that ${\mathcal Y}_{\mbox{\tiny{HGS}}} = F^{-1} {\mathcal Y}$ where $\mathcal{Y}= ({\mathcal Y}^1 , ..., {\mathcal Y}^k)^T$.
 Hence
 \begin{equation*}
  \begin{split}
   \langle J, d^\C {\mathcal Y}^i_{\mbox{\tiny{HGS}}} \otimes \zeta_i\rangle  = & \ \mathcal{J}^T. \, d^\C{\mathcal Y}_{\mbox{\tiny{HGS}}} = {\bf J}^T F d^\C (F^{-1} {\mathcal Y}) =  {\bf J}^T F X_0(F^{-1}) \mathcal{X}^0 \wedge {\mathcal Y} + {\bf J}^T F F^{-1}d^\C {\mathcal Y} \\ 
   = &  - {\bf J}^T F(F^{-1} X_0(F) F^{-1}) \mathcal{X}^0 \wedge {\mathcal Y} + {\bf J}^T d^\C {\mathcal Y} = - {\bf J}^T R \mathcal{X}^0 \wedge {\mathcal Y} + {\bf J}^T d^\C {\mathcal Y}\\
   = & -  J_i R_{ij} \mathcal{X}^0 \wedge {\mathcal Y}^j + \langle J, d^\C {\mathcal Y}^i\otimes \xi_i\rangle.   
  \end{split}
 \end{equation*}
Finally, we conclude, using Definition~\ref{Def:Jsigma}, that
$$
   B_{\mbox{\tiny{HGS}}}  = \langle J, \mathcal{K}_\subW\rangle +  \langle J, d^\C {\mathcal Y}^i_{\mbox{\tiny{HGS}}} \otimes \zeta_i\rangle =  p_ad^\C \varepsilon^a - J_i R_{ij} \mathcal{X}^0\wedge \mathcal{Y}^j + J_i d^\C \mathcal{Y}^i.
 $$
\end{proof}


\medskip

Following Example~\ref{Ex:HorSym1} and Corollary~\ref{C:HorSym1}, next we observe 
that a system that admits a basis of $\g_S \to Q$ given by $G$-invariant horizontal 
symmetries is hamiltonizable without the need of a gauge transformation 
(i.e., $B_{\mbox{\tiny{HGS}}}=0$ in this case). 

\begin{corollary}[of Theorem~\ref{T:BalYapu19} and Corollary~\ref{C:HorSym1}, Horizontal symmetries] \label{C:HorSym2}
Let $(\M, \{\cdot, \cdot\}_{\emph\nh}, H_\subM)$ be a nonholonomic system with a $G$-symmetry 
satisfying Conditions $\cA$ and with the bundle $\g_S\to Q$  admitting a basis of 
$G$-invariant horizontal symmetries. Then, the reduced bracket 
$\{\cdot, \cdot\}_{\emph\red}$ on $\M/G$ is twisted Poisson with characteristic distribution 
given by the common level sets of the horizontal gauge momenta.  If Condition $(\cA 4)$ is fulfilled,  
$\{\cdot, \cdot\}_{\emph\red}$ is a $\textup{rank}\, 2$-Poisson bracket. 
\end{corollary}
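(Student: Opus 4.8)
The plan is to use Corollary~\ref{C:HorSym1} to identify the relevant objects and then transport the Poisson structure provided by Theorem~\ref{T:BalYapu19} back to $\{\cdot,\cdot\}_{\red}$. By Corollary~\ref{C:HorSym1}, a $G$-invariant basis $\{\eta_1,\dots,\eta_k\}$ of horizontal symmetries of $\g_S\to Q$ is a basis of horizontal gauge symmetries; hence $\mathfrak{B}_{\mbox{\tiny{HGS}}}=\{\eta_1,\dots,\eta_k\}$ and the associated horizontal gauge momenta are $\mathcal{J}_i=\langle J^\nh,\eta_i\rangle = J_i$. First I would observe that these momenta are Casimir functions of $\{\cdot,\cdot\}_{\red}$: since $(\eta_i)_\subQ\in\Gamma(S)\subset\Gamma(D)$ one has $(\eta_i)_\subM\in\Gamma(\C)$, and the identity ${\bf i}_{(\eta_i)_\subM}\Omega_\subM = dJ_i$ (obtained by restricting to $\M\subset T^*Q$ the corresponding identity for the canonical momentum map on $T^*Q$), together with the nondegeneracy of $\Omega_\subM|_\C$, forces the $\{\cdot,\cdot\}_\nh$-hamiltonian vector field of $J_i$ to equal $(\eta_i)_\subM$. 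As this vector field is tangent to the $G$-orbits it reduces to the zero vector field on $\M/G$; therefore each $\bar{\mathcal J}_i$ is a Casimir of $\{\cdot,\cdot\}_{\red}$ and the characteristic distribution of $\{\cdot,\cdot\}_{\red}$ is contained in $\bigcap_i\ker d\bar{\mathcal J}_i$.

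For the twisted Poisson statement I would feed the basis $\mathfrak{B}_{\mbox{\tiny{HGS}}}$ into the gauge construction of Section~\ref{Ss:Hamiltonization}. By Proposition~\ref{Prop:SigmaGauge} (which only needs Conditions~$\cA$), the $G$-invariant, $\tau_\subM$-semi-basic $2$-form $B_{\mbox{\tiny{HGS}}}=\langle J,\sigma_{\mbox{\tiny{HGS}}}\rangle$ associated with this basis induces a gauge transformation of $\{\cdot,\cdot\}_\nh$ whose reduction $\{\cdot,\cdot\}_{\red}^{B_{\mbox{\tiny{H\!G\!M}}}}$ is a Poisson bracket on $\M/G$ with symplectic leaves the common level sets of the $\bar J_i=\bar{\mathcal J}_i$ (Theorem~\ref{T:BalYapu19} adds that this Poisson bracket describes $X_\red$, which is not needed here). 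Since $B_{\mbox{\tiny{HGS}}}$ is $G$-invariant and semi-basic with respect to $\tau_\subM$, reduction commutes with its gauge transformation (cf. Appendix~\ref{A:Hamiltonization}, \cite{GN2008,BN2011}, and the proof of Proposition~\ref{Prop:SigmaGauge}), so $\{\cdot,\cdot\}_{\red}$ is, on $\M/G$, a gauge transformation of the Poisson bracket $\{\cdot,\cdot\}_{\red}^{B_{\mbox{\tiny{H\!G\!M}}}}$. A gauge transformation of a Poisson bivector is twisted Poisson and leaves the characteristic distribution unchanged; therefore $\{\cdot,\cdot\}_{\red}$ is twisted Poisson with characteristic distribution given by the common level sets of the horizontal gauge momenta $\bar{\mathcal J}_1,\dots,\bar{\mathcal J}_k$, in agreement with the first paragraph.

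Finally, if Condition~$(\cA4)$ holds, Lemma~\ref{L:dimM/G} gives $\dim(\M/G)=k+2$, so the level sets just described are $2$-dimensional. For a twisted Poisson bivector $\pi$ the Jacobiator is the twisting (closed) $3$-form evaluated on triples of vectors of the form $\pi^\sharp(\cdot)$; since these all lie in the rank-$2$ characteristic distribution, that evaluation vanishes identically, so $[\pi_{\red},\pi_{\red}]=0$ and $\{\cdot,\cdot\}_{\red}$ is a genuine rank-$2$ Poisson bracket. The step requiring most care is the one in the second paragraph, namely that $G$-reduction is compatible with the gauge transformation by $B_{\mbox{\tiny{HGS}}}$ (which is $\tau_\subM$-semi-basic and $G$-invariant but not basic for the projection $\rho:\M\to\M/G$, so one must argue that its effect on the bracket nevertheless descends to $\M/G$); everything else is a direct consequence of the cited results and a dimension count.
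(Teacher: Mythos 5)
Your first paragraph reproduces the paper's own argument verbatim in substance: for a horizontal symmetry $\eta$ one has ${\bf i}_{\eta_\subM}\Omega_\subM|_\C = dJ_\eta|_\C$, so the $\{\cdot,\cdot\}_\nh$-hamiltonian vector field of $J_\eta$ is $\pm\eta_\subM\in\Gamma(\V)$ and the $\bar{\mathcal J}_i$ are Casimirs of $\{\cdot,\cdot\}_\red$. The genuine gap is in your second paragraph. The claim that $\{\cdot,\cdot\}_\red$ is a gauge transformation of the Poisson bracket $\{\cdot,\cdot\}_\red^{B_{\mbox{\tiny{HGS}}}}$ \emph{because} $B_{\mbox{\tiny{HGS}}}$ is $G$-invariant and $\tau_\subM$-semi-basic is not valid: Appendix~\ref{A:Hamiltonization} states explicitly that the two reduced brackets are gauge related if and only if $B$ is \emph{basic} with respect to $\rho:\M\to\M/G$, and a $2$-form of the type $\langle J,\sigma_{\g_S}\rangle$, while semi-basic for $\tau_\subM:\M\to Q$, does not in general annihilate $\V$ (its contraction with $\eta_\subM$ involves $\sigma_{\g_S}$ evaluated on $\eta_Q\in V=S\oplus W$, and only the $W$-component is killed). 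You flag this yourself as ``the step requiring most care'' but never close it; as written, neither the twisted-Poisson property of $\{\cdot,\cdot\}_\red$ nor the identification of its characteristic distribution with the common level sets (your paragraph 1 only gives an inclusion) is established.

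The repair is short and is exactly what the paper does, by either of two routes. First route: for a basis of \emph{constant} sections the $2$-form $B_{\mbox{\tiny{HGS}}}$ vanishes identically (this can be read off from \eqref{Eq:Bhgm} and is also proved in \cite{BalYapu19}), so $\{\cdot,\cdot\}_{B_{\mbox{\tiny{HGS}}}}=\{\cdot,\cdot\}_\nh$ and the conclusion of Proposition~\ref{Prop:SigmaGauge} applies directly to $\{\cdot,\cdot\}_\red$ itself, with no descent issue to resolve. Second (the ``direct proof'' the paper prefers): from $\pi_\red^\sharp(d\bar J_{\eta_i})=0$ one gets $k$ independent Casimirs, the rank of the characteristic distribution of $\{\cdot,\cdot\}_\red$ is $\dim(\M/G)-k$, hence that distribution \emph{equals} the common level sets of the $\bar{\mathcal J}_i$ and is integrable; Remark~\ref{R:RegularTwisted} then gives the twisted Poisson property. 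Your final paragraph is fine: under $(\cA4)$ the leaves are $2$-dimensional by Lemma~\ref{L:dimM/G}, and a closed $3$-form evaluated on a rank-$2$ distribution vanishes (equivalently, $2$-dimensional almost symplectic leaves are symplectic), so the bracket is a rank-$2$ Poisson bracket.
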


\begin{proof}
It can be observed from \eqref{Eq:Bhgm} that $B_{\mbox{\tiny{H\!G\!M}}} = 0$ 
when the basis $\mathfrak{B}_{\g_S}$ is given by constant sections (this   was 
also proven in \cite{BalYapu19}). However, it is easier to see a direct proof of this fact: 
if $\eta\in \g$ is a horizontal symmetry, then  ${\bf i}_{\eta_\subM} \Omega_\subM |_\C =  dJ_{\eta} |_\C$, 
thus $\pi_\nh^\sharp(dJ_\eta) = -\eta_\subM$ and hence $\pi_\red^\sharp(d\bar{J_\eta}) =0$. 
Then the reduced bracket $\{\cdot,\cdot\}_\red$ admits $k$ Casimirs. Since the rank of the 
characteristic distribution of $\{\cdot,\cdot\}_\red$ is $dim (\M/G) - k$, by Lemma~\ref{L:dimM/G} 
we conclude that its  characteristic distribution integrable and given by the common 
level sets of the horizontal gauge momenta.  Following Remark \ref{R:RegularTwisted}, 
the reduced bracket $\{\cdot, \cdot\}_\red$ is twisted Poisson. Since Condition $(\cA 4)$ 
implies that $\textup{rank}(H)=1$, then $dim (\M/G) = 2+ k$ and thus the characteristic 
distribution of $\{\cdot , \cdot\}_\red$ has 2-dimensional leaves. 
Therefore, the foliation is symplectic and $\{\cdot, \cdot\}_\red$ is Poisson.

\end{proof}


\subsection{Horizontal gauge momenta and broad integrability of the complete system}\label{Sec:c-integrability}

In the previous subsections we have studied the dynamics and the geometry of the reduced system.
Under the hypotheses of Theorem~\ref{T:Main} the reduced dynamics is integrable by quadratures,
and if the joint level sets of the first integrals are connected and compact  the reduced 
dynamics consists of periodic orbits or equilibria. Moreover
the reduced system is hamiltonizable via a rank-2 Poisson structure, whose (global) Casimirs
are the $k$ horizontal gauge momenta.  In this Section we aim to obtain information 
on the dynamics and geometry of the complete system. 
We will then focus in the case in which the reduced dynamics 
is periodic and, by using techniques of reconstruction theory, we will see that if the symmetry 
group $G$ is compact, then the  dynamics of the complete systems is quasi-periodic on tori of 
dimension at most {\it rank}$\,G+1$, where {\it rank}$ \,G$ denotes the rank of the group, i.e.
the dimension of the maximal abelian subgroup of $G$. If the symmetry group $G$ 
is not compact, the complete dynamics can be either quasi-periodic on tori or an unbounded copy of 
$\bR{}$, depending on the symmetry group. Some details on these aspects are reviewed 
in Appendix~\ref{app:rec}, but see also \cite{AM1997,FPZ2020}.
We thus show how the broad integrability 
of the complete dynamics of these type of systems is deeply related to their 
symmetries, that are able to produce, not only the right 
amount of dynamical symmetries, but also 
the complementary number of first integrals. 
We will then apply these results to the example of a heavy homogeneous ball that rolls without sliding 
inside a convex surface of revolution (see Section~\ref{Ex:BallSurface}). This case
presents a periodic dynamics in the reduced space, and a broadly integrable complete 
dynamics on tori of dimension at most three, thus re-obtaining the results 
in \cite{hermans,FG2007}.

We say that a $G$-invariant subset $\cP$ of $\cM$ is a {\it relative periodic orbit for} $X_\nh$, 
if it invariant by the flow and its projection on $\cM/G$ is a periodic orbit of $X_\red$.  Now, we can summarize these results as follows.

\begin{theorem}\label{T:reconstruction}
   Let us consider a nonholonomic system $(\cM,\Omega_\subM|_\C, H_\subM)$ 
   with a $G$-symmetry satisfying Conditions $(\cA1)$-$(\cA4)$.
   Assume that the hypotheses of Theorem~\ref{T:Main} are fulfilled, and that the 
   reduced dynamics is periodic, then
   \begin{enumerate}
   \item[$(i)$] if the group $G$ is compact, the flow of $X_\emph{\nh}$ on a relative periodic orbit $\cP$
   is quasi--periodic with at most $rank\, G+1$ frequencies and the phase space if fibered in tori 
   of dimension up to rank$\, G+1$. 
   \item[$(ii)$] if $G$ is non--compact, the flow of $X_\emph{\nh}$ over a periodic orbit is either 
   quasi--periodic, or a copy of $\bR{}$, that leaves every compact subset of $\mathcal P$.\footnote{From 
   now on we will call {\it escaping} a dynamical behaviour that leaves every compact subset of $\mathcal P$.}
   \end{enumerate}
\end{theorem}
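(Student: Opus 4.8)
The plan is to obtain both statements from reconstruction theory, as collected in Appendix~\ref{app:rec}, once the geometry of a relative periodic orbit has been made explicit. First I would fix a relative periodic orbit $\cP\subseteq\M$ of $X_\nh$ and use Condition $(\cA3)$: since the $G$-action is free and proper, the orbit projection restricts to a map $\rho|_{\cP}:\cP\to\rho(\cP)$ that exhibits $\cP$ as a principal $G$-bundle over the periodic orbit $\rho(\cP)\cong S^1$ of $X_\red$ (if $\rho(\cP)$ is instead an equilibrium of $X_\red$, then $\cP$ is a relative equilibrium of $X_\nh$ and the statement is trivial). Since $X_\nh|_{\cP}$ is $G$-invariant and projects under $\rho$ onto the nowhere-vanishing periodic field $X_\red|_{\rho(\cP)}$, the complete flow on $\cP$ is, by definition, a reconstruction of this periodic base dynamics; the hypotheses of Theorem~\ref{T:Main} together with the assumption that $X_\red$ is periodic simply guarantee that we are in this situation (compare Theorem~\ref{T:reduced-integrability}).

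Second, I would carry out the standard reconstruction procedure. Choose a principal connection on $\cP\to S^1$, let $\gamma_h(t)$ be the horizontal lift of the periodic base orbit, of period $T$, through a chosen $m_0\in\cP$, and write the integral curve of $X_\nh$ through $m_0$ as $t\mapsto\Psi_{g(t)}(\gamma_h(t))$, where $g(t)\in G$ solves a non-autonomous equation on $G$ with $T$-periodic coefficients. Setting $g:=g(T)$, the \emph{reconstruction phase} (the composition of the dynamic and geometric phases), the orbit of $X_\nh$ through $m_0$ closes up modulo the $G$-action after time $T$; more precisely, the time-$T$ map of the flow restricted to a fibre of $\rho|_{\cP}$ is right translation by $g$, so the orbit's closure in $\cP$ is the image, under the reconstruction map, of $\overline H\times S^1$, where $\overline H\subseteq G$ denotes the closure of the cyclic subgroup $\langle g\rangle$. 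Thus the asymptotic behaviour of the flow on $\cP$ is entirely governed by $\overline H$.

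For item $(i)$, with $G$ compact, $\overline H$ is a compact abelian Lie group, whose identity component is a torus $\bT{r}$ with $r\le\textup{rank}\,G$, since every torus in a compact Lie group is conjugate into a maximal torus. Hence every orbit closure of $X_\nh$ on $\cP$ is diffeomorphic to a torus of dimension at most $\textup{rank}\,G+1$, on which the reconstructed flow is linear, that is, quasi-periodic with at most $\textup{rank}\,G+1$ frequencies. Letting $m_0$ and $\cP$ vary, these tori fibre the part of the phase space covered by relative periodic orbits. This is precisely the reconstruction theorem for compact symmetry groups recalled in Appendix~\ref{app:rec} (see also \cite{AM1997,FPZ2020}).

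For item $(ii)$, with $G$ non-compact, everything above goes through up to the description of $\overline H\subseteq G$, where a dichotomy appears: either $\overline H$ is compact --- then it is a compact abelian Lie group, a torus times a finite group, and the orbit of $X_\nh$ is again quasi-periodic on a torus --- or $\overline H$ is non-compact, and then $\langle g\rangle$ is an infinite subgroup that leaves every compact subset of $G$, so that the reconstructed orbit is an unbounded copy of $\mathbb{R}$ which leaves every compact subset of $\cP$. I expect the main obstacle to be precisely this point: one needs the structural fact that a cyclic subgroup of a Lie group is either relatively compact --- hence contained in a compact, thus essentially toral, subgroup --- or unbounded, together with the verification that unboundedness in $G$ is inherited by the orbit in $\cP$ through the principal-bundle structure of $\rho|_{\cP}$; which of the two alternatives occurs generically depends on $G$. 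The remaining ingredients --- existence of a connection on $\cP\to S^1$, $G$-invariance of $X_\nh$, and smoothness of the reconstruction conjugacy --- are standard and are packaged in Appendix~\ref{app:rec}.
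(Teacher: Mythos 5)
Your proposal is correct and follows the same route as the paper: the paper's proof simply combines Theorem~\ref{T:reduced-integrability} with the reconstruction results for periodic reduced orbits recalled in Appendix~\ref{app:rec} (Proposition~\ref{periodic_loops}, cited to \cite{field1990,krupa,AM1997}). The only difference is that you unpack the internal argument of that proposition (horizontal lift, reconstruction phase $g(T)$, closure of the cyclic subgroup, maximal-torus bound in the compact case, and the compact/escaping dichotomy in the non-compact case), whereas the paper outsources these steps to the appendix and the references.
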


\begin{proof}
To prove this result we combine the results on integrability of the reduced system given by 
Theorem~\ref{T:reduced-integrability} with the results on reconstruction theory from periodic orbits recalled in Appendix~\ref{app:rec}.

More precisely, we confine ourselves to the subspace of the reduced space $\M/G$ in which 
the dynamics is periodic. Then, if the symmetry group is compact, the reconstructed dynamics
is generically quasi-periodic on tori of dimension $d+1$, where $r$ is the rank of the group \cite{field1990,krupa,hermans,CDS}.
The phase space, or at least a certain region of it, has the structure of a $\bT{d+1}$ fiber bundle,
(see \cite{FG2007} for details on the geometric structure of the phase space in this case).
On the other hand if the group is not compact, the reconstructed orbits are quasi-periodic or
a copy of $\bR{}$ that `spirals' toward a certain direction.
\end{proof}

\section{Examples}\label{S:Examples}

\subsection{The snakeboard}\label{Ex:Skate-1}

The snakeboard is a derivation of the skateboard where the rider is allowed to generate a rotation 
in the axis of the wheels creating a torque so that the board spins about a vertical axis, 
see \cite{OLMB,BKMM}. We denote by $r$ the distance from the center of the board to 
the pivot point of the wheel axes, by $m$ the mass of the board, by $\mathbb{J}$ the inertial 
of the rotor and by ${\mathbb J}_1$ the inertia of each wheel. Following \cite{BKMM}
we assume that the parameters are chosen such that $\bJ+2\bJ_1+\bJ_0 = mr^2$, where 
$\bJ_0$ denotes the inertia of the board. The snakeboard is then modelled 
on the manifold $Q = SE(2) \times S^{1} \times S^{1}$ with coordinates $q=(\theta, x, y, \psi, \phi)$, 
where $(\theta,x,y)$ represent the position and orientation of the board, $\psi$ is the angle 
of the rotor with respect to the board, and $\phi$ is the angle of the front and back 
wheels with respect to the board (in this simplified model they are assumed to be equal). 

\begin{figure}[h]
\begin{center}
{\small
{\scalebox{.7}{\includegraphics*{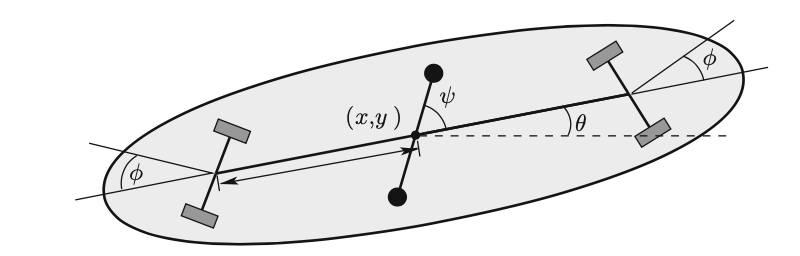}}}
\put(-170,25){$r$}
}
\caption{\small The snakeboard.}
\end{center}
\end{figure}

The Lagrangian is given by
\begin{equation*}
L(q,\dot{q}) = \frac{1}{2} m (\dot{x}^2 + \dot{y}^2 + r^2 \dot{\theta}^2) + \frac{1}{2} {\mathbb J} \dot{\psi}^2 + {\mathbb J} \dot{\psi} \dot{\theta} + {\mathbb J}_0 \dot{\phi}^2.
\end{equation*}
The nonholonomic constraints impose that the front and back wheels 
roll without sliding and hence the constraint 1-forms are defined to be
\begin{equation}\label{epsbdef}
\begin{split}
\omega^1 & = -\sin(\theta +\phi)\, dx + \cos(\theta+\phi)\,dy-r \cos\phi\,d\theta,\\ 
\omega^2 & = -\sin(\theta -\phi)\, dx + \cos(\theta-\phi)\,dy+r \cos\phi\,d\theta.
\end{split}
\end{equation}
Note that $\omega^1$ and $\omega^2$ are independent whenever $\phi \neq \pm\pi/2$.  Therefore, we define the configuration manifold $Q$ so that $q=SE(2) \times S^1\times (-\pi/2,\pi/2)$. The constraint distribution $D$ is given by
\begin{equation}
\label{Ex:Snake:D}
D = \textup{span} \{ Y_\theta := \sin\phi\partial_{\theta} -r\cos\phi\cos\theta \partial_x -r\cos\phi\sin\theta \partial_y , \, \partial_\psi , \,  \partial_{\phi} \}.
\end{equation}

\noindent {\bf The existence of horizontal gauge momenta}. The system is invariant with respect to the free and proper action on $Q$ of  $G = SE(2)\times S^1$ given by 
\[
\Phi((\alpha, a,b; \beta),(\theta, x, y, \psi, \phi) ) = (\theta+\alpha, x\cos \alpha - y\sin\alpha + a, x\sin\alpha + y\cos\alpha + b , \psi +\beta, \phi),
\] 
and hence $V=\textup{span}\{\partial_\theta, \partial_{\psi}, \partial_x, \partial_y\}$ and  $S = \textup{span}\{Y_\theta, \partial_{\psi}\}$ (see \cite{BKMM}). First, we observe that $[Y_\theta, \partial_{\psi}]=0$ and hence the kinetic energy metric is trivially strong invariant on $S$.  Second,  $H := \textup{span}\{ \partial_\phi\}$ and it is straightforward to check that $V^\perp = H$.  Then, by Corollary \ref{C:Cond-Friends}$(i)$ the system admits 2 (functionally independent) $G$-invariant horizontal gauge momenta.  


\noindent {\bf The computation of the of horizontal gauge momenta}. 
Let us consider the adapted basis to $TQ = D\oplus W$, given by 
$\mathfrak{B}_{TQ}=\{Y_\theta, \partial_\psi, \partial_\phi, Z_1,Z_2\}$, where 
$$
Z_1 := \frac{1}{2\cos\phi} \left(-\sin\theta \partial_x +\cos\theta \partial_y - \frac{1}{r}\partial_\theta\right)
\qquad \mbox{and} \qquad Z_2 := \frac{1}{2\cos\phi} \left(-\sin\theta \partial_x +\cos\theta \partial_y + \frac{1}{r}\partial_\theta \right).
$$  
Denoting by $(p_\theta, p_\psi,p_\phi,p_1, p_2)$ the coordinates on $T^*Q$ associated 
to the dual basis  
$$
\mathfrak{B}_{T^*Q} = \{ \alpha_\theta: = -\tfrac{1}{r\cos\phi}(\cos \theta dx +\sin\theta dy), d\psi, d\phi, \omega^1, \omega^2\},
$$
we obtain that 
\[
 \M = \left\{(q ; p_\theta, p_\psi,p_\phi,p_1,p_2) \ : \ p_1 = - p_2 = -\tfrac{1}{2}\Big( \tfrac{(mr^2-\mathbb{J})\sin\phi}{r\cos\phi\, \Delta} p_\theta  + \tfrac{mr\cos\phi}{\Delta} p_\psi \Big)  \right\},
\]
where $\Delta = \Delta(\phi) = mr^2 - \bJ\sin^2\phi$ (recall that $\Delta(\phi)>0$, since $mr^2>\bJ$). 

We consider the global basis of $\g_S$ given by $\mathfrak{B}_{\g_S} = \{ \xi_1 = (\sin\phi, -r\cos\phi \cos\theta +y, -r\cos\phi\sin\theta -x;0), \xi_2 = (0,0,0;1)\}$, and we observe that $(\xi_1)_\subQ = Y_\theta$ and $(\xi_2)_\subQ = \partial_{\psi}$.  Following \eqref{Eq:J_i}, $J_1 = \langle J^\nh, \xi_1 \rangle = p_\theta$ and $J_2 = \langle J^\nh, \xi_2 \rangle = p_\psi$.

The function $\mathcal{J}= f_\theta(\phi)p_\theta + f_\psi(\phi) p_\psi$ is a horizontal gauge momentum 
if and only if  $R. f = f'$ where $R$ is the $2\times 2$ matrix given in \eqref{Eq:ODESystem}, 
$f = (f_\theta, f_\psi)^t$ and $f'=(f'_\theta, f'_\psi)$ for $f'_\theta = \tfrac{d}{d\phi}f_\theta$ (analogously for $f'_\psi$).  
In our case, using that $\{Y_\theta, \partial_\psi\}$ is a basis of $S$ and $X_0 = \partial_\phi$, we obtain 
\begin{equation*}
 R = [\kappa|_S]^{-1} N, \qquad \mbox{for} \ [\kappa|_S] = 
 \left( \! \begin{smallmatrix*}  m r^2 & \mathbb{J}\sin\phi \\ \mathbb{J}\sin\phi & \bJ  \end{smallmatrix*} \! \right)  \ \mbox{and} \ N =  \left( \! \begin{smallmatrix*} 0 & \ \ 0 \\ -\mathbb{J}\cos\phi & \ \ 0 \end{smallmatrix*}\!\right).
 \end{equation*}
 Hence, we arrive to the linear system
 \begin{equation}\label{Ex:Snake:ODE}
 \tfrac{\cos\phi}{\Delta} \left( \! \begin{array}{cc} \mathbb{J}\sin\phi  & 0 \\ - mr^2\cos\phi &  0  \end{array} \! \right)
 \left( \! \begin{array}{c}  f_\theta \\ f_\psi  \end{array} \! \right) =\left( \! \begin{array}{c}  f'_\theta \\ f'_\psi  \end{array} \! \right),
 \end{equation}
 which admits 2 independent solutions:  $f^1= (f^1_\theta,f^1_\psi)$, 
 with $f^1_\theta = \frac{1}{\sqrt{2 \Delta}}$, $f^1_\psi = -f^1_\theta\sin\phi $,  and $f^2 =(0,1)$. Therefore the horizontal gauge momenta can be written as
\begin{equation}\label{Ex:Snake:J1J2}
    \mathcal{J}_1 = \tfrac{1}{\sqrt{2 \Delta}} \;
    ( p_\theta- p_\psi\, \sin\phi ) \qquad \mbox{and} \qquad   \mathcal{J}_2 = p_\psi.
 \end{equation}

\begin{remarks}
\begin{enumerate}
 \item[$(i)$] On the one hand, since $\xi_2$ is a horizontal symmetry, it is expected to have $\mathcal{J}_2= p_\psi$ conserved (Cor.~\ref{C:HorSym1}). On the other hand, the horizontal gauge momentum $\mathcal{J}_1$ is realized 
 by a non-constant section $\zeta_1$ and, as far as we could search, $\mathcal{J}_1$ has 
 not appeared in the literature yet. Moreover, using that $ H_\subM = \frac{1}{2} \left(\frac{ p_\theta^2}{\Delta}\, - 2\frac{\sin\phi}{\Delta}\, p_\theta p_\psi+
            \frac{m r^2}{\bJ\, \Delta}\, p_\psi^2 +\frac{p_\phi^2}{2\bJ_0}\, \right),$ it is possible to check our results.  
 \item[$(ii)$] The horizontal gauge momenta \eqref{Ex:Snake:J1J2} can also be obtain from the {\it momentum equation} 
 in Proposition~\ref{Prop:MomEq1}, which in case is written as 
 $f_\theta\langle J, \sigma_{\g_S} \rangle (Y_\theta, X_\nh) + f_\psi\langle J, \sigma_{\g_S} 
 \rangle (\partial_\psi, X_\nh) + p_\theta X_\nh(f_\theta) + p_\psi X_\nh(f_\psi) = 0.$
\end{enumerate}
\end{remarks}

%
%

\medskip

\noindent {\bf Hamiltonization and integrability.} The system descends to the quotient 
manifold $\M/G$ equipped with coordinates $ ( \phi, p_\phi, p_\theta, p_\psi)$.  
The $G$-invariant horizontal gauge momenta ${\mathcal J}_1, {\mathcal J}_2$ in 
\eqref{Ex:Snake:J1J2} and the hamiltonian function $H_\subM$, also descend to 
functions $\bar{\mathcal J}_1, \bar{\mathcal J}_2$ and $H_\red$ on $\M/G$. 

\medskip

\noindent {\it Integrability.}  
Since the reduced space $\M/G$ is $4$-dimensional, 
Theorem~\ref{T:reduced-integrability} guarantees that the reduced dynamics is integrable by quadratures. 
We observe that the reduced system is not periodic, thus we can say nothing generic on the complete dynamics 
or on the geometry of the phase space.

\medskip

\noindent{\it Hamiltonization.} 
Theorem \ref{T:BalYapu19} guarantees that the system is Hamiltonizable. In order to write the Poisson bracket on $\M/G$ that describes the dynamics, we compute the 2-form $B_{\mbox{\tiny{HGS}}}$ in terms of the basis $\mathfrak{B}_{TQ}=\{Y_1:=Y_\theta, Y_2:=\partial_\psi, X_0:=\partial_\phi, \partial_x, \partial_y\}$ using Theorem \ref{T:FormulaB}. Let us denote by $R_{ij}$ the elements of the matrix $R$ in \eqref{Ex:Snake:ODE}, and then 
\begin{equation*}
  B_{\mbox{\tiny{HGS}}}  = \langle J, \mathcal{K}_\subW \rangle - p_\theta(R_{11} d\phi \wedge d\theta + R_{12} d\phi \wedge d\psi) - p_\psi (R_{21} d\phi\wedge d\theta + R_{22} d\phi\wedge d\psi) + p_\theta d\alpha_\theta.
  \end{equation*}
  First, we observe that 
  $$ 
  \langle J, \mathcal{K}_\subW\rangle |_\C = \iota^*(p_1)d\omega^1 + \iota^*(p_2)d\omega^2 |_\C =  -\left( \tfrac{(mr^2-\mathbb{J})\sin\phi}{\cos\phi\, \Delta} p_\theta  + \tfrac{mr^2\cos\phi}{\Delta} p_\psi\right)  \, d\phi\wedge \alpha_\theta |_\C
 $$
 Second, we observe that 
 $$
 (R_{11}p_\theta + R_{21}p_\psi) d\phi \wedge \alpha_\theta = \left(\tfrac{\mathbb{J}\sin\phi\cos\phi}{\Delta} p_\theta - \tfrac{mr^2 \cos\phi}{\Delta}p_\psi \right)d\phi\wedge\alpha_\theta.
 $$
 Finally, using that $p_\theta d\alpha_\theta|_\C =  p_\theta \tan\phi\, d\phi\wedge\alpha_\theta$  we obtain that $B_{\mbox{\tiny{HGS}}} = 0$.

As a consequence of Theorem \ref{T:BalYapu19}  the reduced bracket $\pi_\red$  which is given by
$$
\pi_\red = \partial_\phi \wedge \partial_{p_\phi} + \tfrac{\cos\phi}{\Delta} (\mathbb{J} \sin\phi \, p_\theta - mr^2 p_\psi ) \partial_{p_\phi}\wedge \partial_{p_\theta},
$$
is a Poisson bracket on $\M/G$ with $\bar{\mathcal{J}}_1$ and $\bar{\mathcal{J}}_2$ playing the role of Casimirs. 
The reduced nonholonomic vector field is then 
\[ 
  X_\red = \{\cdot,  H_\red\}_\red\,.
\]
\begin{remark} The $G$-symmetry considered in this paper is different than the one considered in \cite{BalYapu19, balseiro2014}, therefore the reduced bracket obtained here is not the same as the one presented in these citations.  Moreover, in \cite{BalYapu19,balseiro2014}, the snakeboard was described by a twisted Poisson bracket (with a 4-dimensional foliation) while here, we  show that the snakeboard can be described by a rank 2-Poisson bracket. 
\end{remark}

\medskip

\noindent{\bf The horizontal gauge momenta as parallel sections}. 
Consider the basis $\bar{\mathfrak{B}}_{TQ}=\{Y_1:=Y_\theta, Y_2:=\partial_\psi, X_0:=\partial_\phi, \bar{Z}_1, \bar{Z}_2\}$ where $\bar{Z}_1, \bar{Z}_2$ generate the distribution $W= S^\perp \cap V$. 
The Christoffel symbols of the affine connection $\hat\nabla$ 
coincide with the ones of the Levi-Civita connection and then
$$
\hat\Gamma_{01}^1 = -\tfrac{\bJ \sin\phi\, \cos\phi}{\Delta}\,,\qquad 
\hat\Gamma_{01}^2 = \tfrac{m r^2 \cos\phi}{\Delta} 
\qquad \mbox{and} \qquad \hat\Gamma_{02}^1 = - \hat \Gamma_{02}^2 = 0.
$$
Following Def.~\ref{Def:SigmaConnection} we get that $\Sigma = \Sigma^\theta \otimes \xi_1 + \Sigma^\psi \otimes \xi_2 = 0$.
Therefore, $\overset{\textit{\tiny{$\Sigma$}}}{\nabla} = \hat{\nabla}$ and then the horizontal gauge symmetries $\zeta = f_1(\phi)\xi_1 + f_2(\phi)\xi_2$ is determined by the condition that they are parallel along the dynamics with respect to the $\hat{\nabla}$ connection, i.e., 
\begin{equation}\label{Ex:Snake:ParTransp}
\hat \nabla_{\dot \gamma} \zeta = 0,
\end{equation}
for $\dot \gamma = T\tau_\subM(X_\nh)$.  

\subsection{Solids of Revolution} \label{Ex:Solids}
Let $\mathcal{B}$ be a strongly convex body of revolution, i.e., a body which is geometrically and dynamically symmetric under rotations about a given axis 
(\cite{CDS,balseiro2017}). Let us assume that the surface ${\bf S}$ of $\mathcal{B}$ 
is invariant under rotations around a given axis, which in our case is chosen to be $e_3$. Then its principal moments 
of inertia are $\mathbb{I}_1=\mathbb{I}_2$ and $\mathbb{I}_3$. 

\begin{figure}[h]
\begin{center}
{\small
{\scalebox{.3}{\includegraphics*{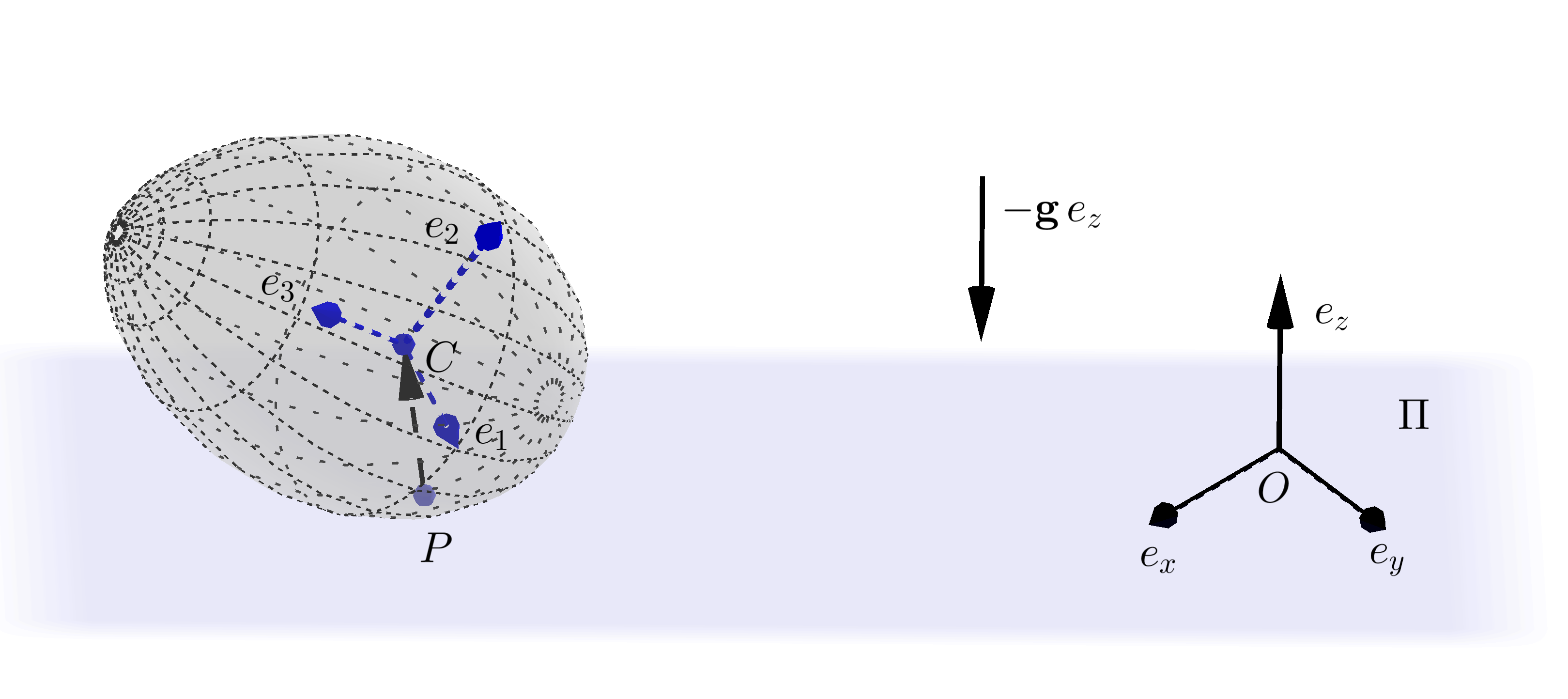}}}
}
\caption{\small Solid of revolution rolling on a horizontal plane.}
\end{center}
\end{figure}

The position of the body in $\R^3$ is given by the coordinates $(g, {\bf x})$ where 
$g\in SO(3)$ is the orientation of the body with respect to an inertial frame $(e_x,e_y,e_z)$ and 
${\bf x} = (x,y,z) \in \R^3$ is the position of the center of mass. Denoting by ${\bf m}$ 
the mass of the body, the lagrangian $L:T(SO(3)\times \R^3) \to \R$ is given by 
\[
L(g,{\bf x}; \vecOm, \dot{\bf x}) = \frac{1}{2}\langle \mathbb{I} \vecOm, \vecOm\rangle + \frac{1}{2} {\bf m} ||\dot{\bf x}||^2 + {\bf m}{\bf g}\langle {\bf x}, e_3\rangle,
\]
where $\vecOm = (\Omega_1, \Omega_2, \Omega_3)$ is the angular velocity in body coordinates, $\langle \cdot, \cdot \rangle$ represents the standard pairing in $\R^3$ and ${\bf g}$ the constant of gravity.  

Let $s$ be the vector from the center of mass of the body to a fixed point on the surface ${\bf S}$. If we denote by $\vecgamma=(\gamma_1, \gamma_2, \gamma_3)$ the third row of the matrix $g\in SO(3)$, then $s$ can be written as $s:S^2 \to {\bf S}$ so that 
\[
s(\vecgamma) = (\varrho(\gamma_3) \gamma_1, \varrho(\gamma_3)\gamma_2, \zeta(\gamma_3)),
\] 
where $\varrho$ and $\zeta$ are the smooth functions defined in \cite{CDS}. Therefore
\[
s(\vecgamma) = \varrho \vecgamma - Le_3,
\] 
where $\varrho = \varrho(\gamma_3)$, $\zeta=\zeta(\gamma_3)$ and $L= L(\gamma_3) = \varrho\gamma_3 - \zeta$.  The configuration space is described as
 $$
Q=\{(g,{\bf x})\in SO(3)\times \R^3 \ : \ z = - \langle \vecgamma, s \rangle \},
$$ 
and it is diffeomorphic to $SO(3)\times \R^2$. 
The nonholonomic constraint describing the rolling without sliding are written as 
$$
\vecOm\times s + {\bf b} = 0,
$$
where ${\bf b} = g^t\dot{\bf x}$ (with $g^t$ the transpose of $g$).  

Let us consider the (local) basis of $TQ$ given by $\{X_1^L, X_2^L, X_3^L, \partial_{x}, \partial_{y}\}$, where $X_i^L$ are the left invariant vector fields on $SO(3)$ and we denote the corresponding coordinates on $TQ$ by $(\vecOm, \dot x, \dot y).$ Then the constraint distribution $D$ is given by
$ D = \textup{span}\{ X_1, X_2,X_3\}$ where 
$$
X_i := X_i^L + (\vecalpha \times s)_i \partial_{x} + (\vecbeta \times s)_i \partial_{y} + (\vecgamma\times s)_i\partial_{z},
$$
for $\vecalpha$ and $\vecbeta$ the first and second rows of the matrix $g\in SO(3)$. 
The constraints 1-forms are 
$$
\epsilon^1 = dx -\langle \vecalpha, s\times \vecL \rangle \quad \mbox{and}\quad \epsilon^2= dy - \langle \vecbeta, s \times \vecL\rangle, 
$$
where $\vecL = (\lambda_1, \lambda_2, \lambda_3)$ are the (Maurer-Cartan) 1-forms on $SO(3)$ dual to the left invariant vector fields $\{X_L^1, X_L^2, X_L^3\}$. 

\noindent {\bf The symmetries}. The Lagrangian and the constraints are invariant with respect 
to the action of  the special Euclidean 
group $SE(2)$ acting on $Q$, at each $(g;x,y)\in Q$, by
$$
\Psi ((h;a,b)), (g;x,y)) = (\tilde h.g; h. (x,y)^t + (a,b)^t)\,,
$$
where $h\in SO(2)$ is an orthogonal $2\times 2$ matrix and 
$\tilde h = {\mbox{\scriptsize{$\left(\begin{array}{cc} h & 0 \\[-3pt] 0 & 1 \end{array}\right)$}}} \in SO(3)$.  
The symmetry of the body makes also the system invariant with respect to 
the right $S^1$-action on $Q$ given by $\Psi_{S^1}(h_\theta, (g,x,y)) = (g\tilde h_\theta^{-1}, h_\theta (x,y)^t)$, 
where we identify $\theta\in S^1$ with the orthogonal matrix $h_\theta\in SO(2)$.  

Therefore, the symmetry group of the system is the Lie group $G = S^1\times SE(2)$, 
with associated Lie algebra $\mathfrak{g}\simeq \R\times \R\times \R^2$. 
The vertical space $V$ is given by 
$$
V=\textup{span}\{ (\eta_1)_\subQ = -X_3^L -y\partial_{x} + x \partial_{y}, \ (\eta_2)_\subQ = \langle \vecgamma,{\bf X}^L \rangle - y \partial_{x} + x \partial_{y}, \ (\eta_3)_\subQ = \partial_{x} , \  (\eta_4)_\subQ = \partial_{y} \},
$$
where $\eta_i$ are the canonical Lie algebra elements in $\mathfrak{g}$ and ${\bf X}^L = (X_1^L,X_2^L,X_3^L)$. 
We observe that the action is not free, since $(\eta_i)_\subQ(g,x,y) $ 
are not linearly independent at $\gamma_3=1$.  We check that the dimension 
assumption \eqref{Eq:dim-assumption} is satisfied: $TQ = D +V$. Let us choose 
$W = \textup{span}\{ \partial_{x}, \partial_{y}\}$ as vertical complement of the constraints 
and then the basis of $TQ$ adapted to the splitting 
\eqref{Eq:TQ=D+W} is ${\bf B}_{TQ} = \{X_1, X_2, X_3, \partial_{x}, \partial_{y}\}$,  
with  dual basis given by ${\bf B}_{T^*Q} = \{\lambda_1, \lambda_2, \lambda_3, \epsilon^1, \epsilon^2\}$. 
The associated coordinates on $T_q^*Q$ are $({\bf M}, K_1,K_2)$ for ${\bf M} = (M_1, M_2,M_3)$ 
and the submanifold $\M$ of $T^*Q$ is then described by 
\begin{equation}\label{Ex:Solids:M}
\M = \{(g,x,y; {\bf M}, K_1, K_2) \ : \ K_1 =  {\bf m} \langle \vecalpha, s\times \vecOm \rangle, \quad K_2 = {\bf m} \langle \vecbeta , s \times \vecOm\rangle \},
\end{equation}
where ${\bf M} = \mathbb{I} \vecOm + m s\times (\vecOm \times s)$.
The horizontal gauge momenta are functions on $\M$ linear in the coordinates $M_i$. 

\noindent {\bf The existence of horizontal gauge momenta.}
First, we observe that the $G$-action satisfies Conditions $(\cA 1)$-$(\cA 4)$ outside $\gamma_3= \pm 1$ and thus, in what follows, we will work on the manifolds $\widetilde Q \subset Q$ and $\widetilde \M\subset \M$ defined by the condition $\gamma_3 \neq \pm 1$. 
Second, we consider the splitting 
\begin{equation}\label{Ex:Solids:H+S+W} 
T\widetilde Q = H \oplus S\oplus W, 
\end{equation}
where $S=D\cap V = \textup{span}\{Y_1:= X_3, Y_2:= \langle \gamma, {\bf X}\rangle \}$, with ${\bf X} = (X_1, X_2,X_3)$ and $H$ is generated by $X_0= \gamma_1 X_2 - \gamma_2X_1$ (observe that $H= S^\perp\cap D$).  Now, we check that the kinetic energy is strong invariant on $S$: in this case, it is enough to see that $\kappa([Y_1,Y_2],Y_1) = 0$ and $\kappa([Y_1, Y_2], Y_2)=0$. These two facts are easily verified using simply that $[X_i^L, X_j^L]=X_k^L$ for $i,j,k$ cyclic permutations of $1,2,3$. In the same way, we also check that $\kappa(X_0, [Y_i, X_0])=0$, for $i=1,2$.   Therefore, by Theorem \ref{T:Main}, we conclude that the system admits $2=\textup{rank}(S)$ $G$-invariant (functionally independent) horizontal gauge momenta $\mathcal{J}_1$, $\mathcal{J}_2$ on $\widetilde\M$ (recovering the results in \cite{BMK2002,CDS}). 

\noindent {\bf The computation of the 2 horizontal gauge momenta}. In order to compute the horizontal gauge momenta, we consider the basis $\mathfrak{B}_{\g_S}$ of $\Gamma(\g_S \to \widetilde Q)$, defined by 
$$
\mathfrak{B}_{\g_S} = \{\xi_1:= (1;0,(h_1,h_2)), \xi_2 := (0;1,(g_1,g_2))\},
$$
where $h_1= h_1(g,x,y) = y + \varrho\beta_3$, $h_2 = h_2(g,x,y) = -x-\varrho \alpha_3$ and $g_1= g_1(g,x,y) = y -L\beta_3$, $g_2 = g_2(g,x,y) = -x +L\alpha_3$. 
The components of the nonholonomic momentum map, in the basis $\mathfrak{B}_{\g_S}$, are given by 
$$
J_1 = \langle {\mathcal J}^\nh, \xi_1 \rangle = {\bf i}_{(\xi_1)_\subM} \Theta_\subM =  - M_3 \qquad \mbox{and}\qquad J_2 = \langle {\mathcal J}^\nh, \xi_2 \rangle = {\bf i}_{(\xi_2)_\subM} \Theta_\subM = \langle \vecgamma, {\bf M}\rangle,
$$
where we are using that $(\xi_1)_Q= Y_1$ and $(\xi_2)_Q= Y_2$, see \eqref{Eq:J_i}.   Then, a function $\mathcal{J}= f_1J_1 + f_2J_2$ is a horizontal gauge momentum if and only if the coordinate functions $(f_1,f_2)$ satisfy the {\it momentum equation} \eqref{Eq:MomEq}
$$
f_1\langle J, \sigma_{\g_S} \rangle (\mathcal{Y}_1, X_\nh) + f_2\langle J, \sigma_{\g_S}\rangle (\mathcal{Y}_2, X_\nh) -M_3  X_\nh(f_1) + \langle \vecgamma, {\bf M}\rangle X_\nh(f_2) = 0.
$$
That is, considering the basis, $\mathfrak{B}_{T\widetilde Q}  = \{ X_0 , \ Y_1, \ Y_2  , \ \partial_x, \ \partial_y\}$, the $G$-invariant coordinate functions $(f_1 = f_1(\gamma_3), f_2 = f_2(\gamma_3))$ are the solutions of the system of ordinary differential equations (defined on $\widetilde Q/G$)
\begin{equation}\label{Ex:Solids:ODE}
R \left( \!\!\begin{array}{c} f_1 \\ f_2 \end{array} \! \! \right )= \left(\!\!\begin{array}{c} \bar{X}_0(f_1) \\ \bar{X}_0(f_2) \end{array} \!\! \right), \qquad \mbox{for} \ R=[\kappa|_S]^{-1} [N],  
\end{equation}
where $\bar{X}_0 = T\rho_{\widetilde Q} (X_0) = (1-\gamma_3^2) \partial_{\gamma_3}$, the matrix $[N]$ has elements $N_{lj} = \kappa(Y_l,[Y_i, X_0]) - \kappa(X_0, [Y_i, Y_l])$ that in this case gives
\begin{equation*}
[N] = m(1-\gamma_3^2)\left( \!\!\begin{array}{cc} -\varrho A   & \varrho (B -\langle \vecgamma, s\rangle) \\ L A - \varrho \langle \vecgamma, s\rangle   & -L B \end{array} \!\! \right ) 
\end{equation*}
for $A = \varrho' (1-\gamma_3^2) - \varrho \gamma_3$ and $B = L' (1-\gamma_3^2) - L\gamma_3- \langle \vecgamma, s\rangle $ (with $(\cdot)' = \tfrac{d}{d\gamma_3} (\cdot)$) and 
$$[\kappa|_S]  = \left(\!\! \begin{array}{cc} \mathbb{I}_3 +m\varrho^2 (1-\gamma_3^2) & -\mathbb{I}_3\gamma_3 - Lm\varrho (1-\gamma_3^2) \\ 
-\mathbb{I}_3\gamma_3 - Lm\varrho (1-\gamma_3^2) &\langle \vecgamma, \mathbb{I}\vecgamma\rangle + L^2m(1-\gamma_3^2) \end{array} \!\! \right).
$$
The system \eqref{Ex:Solids:ODE} admits two independent solutions $\bar{f}^1=(\bar{f}^1_1,\bar{f}^1_2)$ and $\bar{f}^2=(\bar{f}^2_1,\bar{f}^2_2)$ on $\widetilde Q/G$ and therefore we conclude that the two ($G$-invariant) horizontal gauge momenta $\mathcal{J}_1$ and ${\mathcal J}_2$ are  
\begin{equation}\label{Ex:Solids:HGM}
\mathcal{J}_1 = -f^1_1M_3 + f^1_2\langle \vecgamma, {\bf M}\rangle \quad \mbox{and} \quad {\mathcal J}_2 = -f^2_1M_3 + f^2_2\langle \vecgamma, {\bf M}\rangle,
\end{equation}
where $f^i_j = \rho^*\bar{f}^i_j$ for $i,j=1,2$.

\begin{remark}
\begin{enumerate}
  \item[$(i)$] For $f = (f_1, f_2)$, the system \eqref{Ex:Solids:ODE} is equivalently written as 
  $(1-\gamma_3^2)^{-1}R f = f'$. Therefore, we recover the system of ordinary 
  differential equations from \cite{BMK2002,CDS,balseiro2017} (and \cite{BGM96} 
  for the special case of the Tippe-Top and of the rolling disk).
  \item[$(ii)$] The $G$-invariant horizontal gauge momenta ${\mathcal J}_1$, ${\mathcal J}_2$ 
  descend to the quotient $\widetilde \M/G$ as functions $\bar{\mathcal J}_1$, $\bar{\mathcal J}_2$ 
  that are functionally independent. It has been proven in \cite{CDS} that the functions 
  $\bar{\mathcal J}_1$, $\bar{\mathcal J}_2$ can be extended to the whole {\it differential space} $\M/G$.  In this case, it makes sense to talk about $2=\textup{rank}(\g_S)$ horizontal gauge momenta.  
\end{enumerate}    
\end{remark}

\medskip

\noindent{\bf Integrability and hamiltonization.} 
The nonholonomic dynamics $X_\nh$ defined on $\widetilde\M$ can be reduced to $\widetilde\M/G$ obtaining the vector field $X_\red$ (see \eqref{Eq:RedDyn}).
Using the basis $\mathfrak{B}_{T\widetilde Q}=  \{ X_0 , Y_1, Y_2, \partial_x, \partial_y\}$ and its dual basis of $T^*\widetilde{Q}$
\begin{equation}\label{Ex:Solids:BasisTQ}
  \mathfrak{B}_{T^*\widetilde Q}  = \left\{ X^0 := \frac{\gamma_1\lambda_2 - \gamma_2\lambda_1}{1-\gamma_3^2}, \ Y^1 := \gamma_3\frac{ \gamma_1 \lambda_1+\gamma_2\lambda_2}{1-\gamma_3^2} - \lambda_3, \ Y^2 := \frac{ \gamma_1 \lambda_1+\gamma_2\lambda_2}{1-\gamma_3^2}, \ \epsilon^1, \ \epsilon^2   \right\},
 \end{equation}
 we denote by $(v^0,v^1, v^2,v^x, v^y)$ and $(p_0,p_1,p_2,K_1, K_2)$ the associated coordinates on $T\widetilde Q$ and $T^*\widetilde Q$ respectively. The reduced manifold $\widetilde \M/G$ is represented by the coordinates  $(\gamma_3, p_0,p_1,p_2)$.
 
\medskip



\noindent {\it Integrability.}  
Theorem~\ref{T:reduced-integrability} guarantees that the reduced system on 
$\widetilde \M/G$ admits three functionally independent first integrals, namely 
two horizontal gauge momenta $\bar{\mathcal J}_1$ and $\bar{\mathcal J}_2$, 
and the reduced energy $H_\red$. 
Since $\textup{dim}(\widetilde \M/G) = 4$, the reduced dynamics is integrable by quadratures. 
However, the reduced dynamics is not generically
periodic, and therefore we can say nothing generic on the complete dynamics or on the geometry of the phase space.

\medskip

\noindent {\it Hamiltonization.} Even though the hamiltonization of this example has been studied in \cite{balseiro2017,GNMontaldi}, here we see it as a direct consequence of Theorem \ref{T:Main}.  
That is, since this nonholonomic system satisfies the hypotheses of Theorem \ref{T:Main}, 
it is {\it hamiltonizable by a gauge transformation} (Def.~\ref{Def:Hamiltonization}). 
The reduced bracket $\{\cdot, \cdot \}_\red^{B_{\mbox{\tiny{HGM}}}}$ on $\widetilde \M/G$ 
defines a rank-2 Poisson structure, with 2-dimensional leaves given by the common level 
sets of $\bar{\mathcal J}_1$ and $\bar{\mathcal J}_2$, 
that describes the (reduced) dynamics.

In what follows we show how the 2-form $B_{\mbox{\tiny{HGM}}}$, inducing the dynamical gauge transformation that defines $\{\cdot, \cdot \}_\red^{B_{\mbox{\tiny{HGM}}}}$, depends directly on the ordinary system of differential equations \eqref{Ex:Solids:ODE}.
Consider the basis $\mathfrak{B}_{T\widetilde Q}$ and $\mathfrak{B}_{T^*\widetilde Q}$ given in  \eqref{Ex:Solids:BasisTQ} and following Theorem~\ref{T:FormulaB},
$$
B_{\mbox{\tiny{HGM}}} = \langle J,\sigma_{\mbox{\tiny{HGM}}} \rangle = \langle J, \mathcal{K}_\subW\rangle - J_i R_{ij} \mathcal{X}^0 \wedge \mathcal{Y}^j +J_id\mathcal{Y}^i,
$$
where $\mathcal{X}^0=\tau^*_{\tilde\subM} X^0$ and $\mathcal{Y}^i=\tau^*_{\tilde\subM} Y^i$ for $i=1,2$ 
are the corresponding 1-forms on $\widetilde\M$.
Using \eqref{Ex:Solids:M} we have that (see \cite{balseiro2017}),  
\begin{equation*}
 \begin{split}
\langle J, \mathcal{K}_\subW\rangle|_\C & = K_1 \, d\epsilon^1|_\C + K_2\, d\epsilon^2|_\C \\
& =  m\varrho \langle \vecgamma, s\rangle \langle \vecOm, d\vecL\rangle - m (\varrho^2 \langle \vecOm, \vecgamma\rangle + \varrho' c_3) \langle \vecgamma, d\vecL\rangle + m(\varrho L \langle \vecOm, \vecgamma\rangle + L'c_3) d\lambda_3|_\C .
\end{split}
\end{equation*}
Now, recalling the definition of $X^0$, $Y^1$ and $Y^2$ in $\mathfrak{B}_{T^*Q}$ \eqref{Ex:Solids:BasisTQ}, we compute the term 
\begin{equation*}
 \begin{split}
 J_i R_{ij} {\mathcal X}^0 \wedge \mathcal{Y}^j & =J_i \, R_{i1} {\mathcal X}^0 \wedge \mathcal{Y}^1 + J_i\,  R_{i2} {\mathcal X}^0 \wedge \mathcal{Y}^2 \\
& = (1-\gamma_3^2)^{-1} (v^l N_{l1}\langle \vecgamma , d\vecL\rangle + v^l N_{l2}\, d\lambda_3), \\
& = - m (\varrho^2 \langle \vecOm, \vecgamma\rangle + \varrho' c_3) \langle \vecgamma, d\vecL\rangle + m(\varrho L \langle \vecOm, \vecgamma\rangle + L'c_3) d\lambda_3.
 \end{split}
\end{equation*}
where we use that $v^1 = (1-\gamma_3^2)^{-1} (\langle \vecgamma,\vecOm\rangle \gamma_3 - \Omega_3)$ and $v^2= (1-\gamma_3^2)^{-1} (\langle \vecgamma,\vecOm\rangle  - \gamma_3\Omega_3)$. 
Finally, since $dY^i = 0$ for $i=1,2$, we obtain that 
$$B_{\mbox{\tiny{HGM}}} = m\varrho \langle \vecgamma, s\rangle \langle \vecOm, d\vecL\rangle,$$ 
recovering the dynamical gauge transformation from \cite{balseiro2017,GNMontaldi}.  
For the explicit formulas for the brackets, see \cite{balseiro2017}.

\medskip

\begin{remarks}
\begin{enumerate}
\item[$(i)$] Since the $G$-action on $\M$ is proper but not free, the quotient $\M/G$ is a stratified differential space, \cite{CDS,balseiro2017} with a 4 dimensional regular stratum given by $\widetilde \M/G$ and a 1-dimensional singular stratum, associated to $S^1$-isotropy type, that is described by the condition $\gamma_3 =\pm 1$. Moreover, the relation between the coordinates on $T^*\widetilde Q$ relative to the basis ${\bf B}_{T^*\widetilde Q}$ and $\mathfrak{B}_{T^*\widetilde Q}$ is 
\begin{equation*}
p_0 = \gamma_1 M_2 - \gamma_2 M_1, \quad p_1 = \gamma_1 M_1 + \gamma_2 M_2, \quad p_2 =  M_3,
\end{equation*}
Therefore, adding $p_3 = M_1^2+ M_2^2$, we conclude that the coordinates $(\gamma_3, p_0,p_1,p_2,p_3)$ on $\M/G$ are the same coordinates used in \cite{CDS,cushman1998}.

\item[$(ii)$] It is straightforward to write the equations of motion on $\widetilde \M/G$ in the variables $(\gamma_3, p_0,p_1,p_2)$ for the reduced hamiltonian  $H_\red$ recovering the equations in \cite{CDS,cushman1998}. This equations can be used to check the results in this section, however we stress that there is no need to compute them to find the horizontal gauge momenta, nor to study the integrability or the hamiltonization of the system.

\item[$(iii)$] The Routh sphere, the ellipsoid rolling on a plane and the falling disk \cite{cushman1998,CDS,BKK2015}, are seen as particular cases of this example. 
\end{enumerate}
\end{remarks}

\medskip

\noindent{\bf The horizontal gauge momenta as parallel sections}.  
Let us consider the basis $\mathfrak{B}_{T\widetilde{Q}} =\left\{X_0, Y_1,Y_2,\right.$ $\left. Z_1,Z_2\right\}$ where $X_0, Y_1,Y_2$ are the vector fields defined previously but $Z_1,Z_2$ generate the distribution $W$ which, now, is chosen to be $W=S^\perp \cap V$. 
The Christoffel symbols of $\hat \nabla$, in the basis $\mathfrak{B}_{T\widetilde{Q}}$ and $\mathfrak{B}_{\g_S}$, are given by 
{\small{ 
$$
\left(\!\!\begin{array}{c} \hat \Gamma_{01}^1 \\ \hat \Gamma_{01}^2 \end{array} \!\! \right) = \frac{1}{2} [\kappa|_S]^{-1} \!\! \left( \!\!\!\!\begin{array}{c} \kappa'_{11} (1-\gamma_3^2) \\ (\kappa'_{12} +m\varrho B)(1-\gamma_3^2) - H_{21} \end{array} \! \!\!\! \right )  \mbox{ \ and \  }  \left(\!\!\begin{array}{c} \hat \Gamma_{02}^1 \\ \hat \Gamma_{02}^2 \end{array} \!\! \right) = \frac{1}{2}[\kappa|_S]^{-1}\!\! \left( \!\!\!\!\begin{array}{c}  (\kappa'_{12} +m A L)(1-\gamma_3^2) - H_{12}   \\      \kappa'_{22} (1-\gamma_3^2)  \end{array} \!\!\! \! \right ),
$$
}}
and $\hat \Gamma_{ij}^1=\hat \Gamma_{ij}^2 = 0$.  
Following Def.~\ref{Def:SigmaConnection}, the bilinear form $\Sigma = \Sigma^1 \otimes \xi_1 + \Sigma^2\otimes \xi_2$ is given by
\begin{equation*}
  \Sigma^1  = -(\hat \Gamma_{0j}^1 + R_{1j}) X^0 \wedge Y^j \quad \mbox{and} \quad  \Sigma^2  = -(\hat \Gamma_{0j}^2 + R_{2j}) X^0 \wedge Y^j,
\end{equation*}
where the functions $R_{ij}$ are given in \eqref{Ex:Solids:ODE}. Then, the horizontal gauge symmetries can be seen as parallel sections along the dynamics with respect to the $\Sigma$-connection: 
$$
\overset{\textit{\tiny{$\Sigma$}}}{\nabla}_{\dot \gamma}\zeta = 0.
$$ 

\subsection{A homogeneous ball on a surface of revolution} \label{Ex:BallSurface}


Let us consider the holonomic system formed by a homogeneous sphere of mass ${\bf m}$ 
and radius $r>0$, which center $C$ is constrained to belong to a convex surface of revolution 
$\Sigma$ (i.e., the ball rolls on the surface $\tilde\Sigma$, see Figure \ref{Fig:BallonSurface}). 
The surface $\Sigma$ is obtained by rotating about the $z$-axis the graph of a 
convex and smooth function $\phi: \bR{}_+ \longrightarrow \bR{}$. 
Thus, $\Sigma$ is described by the equation $z = \phi(x^2+y^2)$. 
To guarantee smoothness and convexity of the surface, 
we assume that $\phi$ verifies that $\phi'(0^+) = 0$, $\phi'(s)>0$ and $\phi''(s)>0$, when $s>0$. 
To ensure that the ball has only one contact point with the surface we ask the curvature of 
$\phi(s)$ to be at most 1/r.
The configuration manifold $Q$ is $\bR{2}\times SO(3)$ with coordinates 
$(x,y,g)$ where $G$ is the orthogonal matrix fixing the attitude of the sphere and $(x,y)$ are the coordinates of $C$  with respect to
a reference frame with origin $O$ and $z$-axis coinciding with the figure axis of $\Sigma$.

\begin{figure}[h]
\begin{center}
{\small
{\scalebox{.4}{\includegraphics*{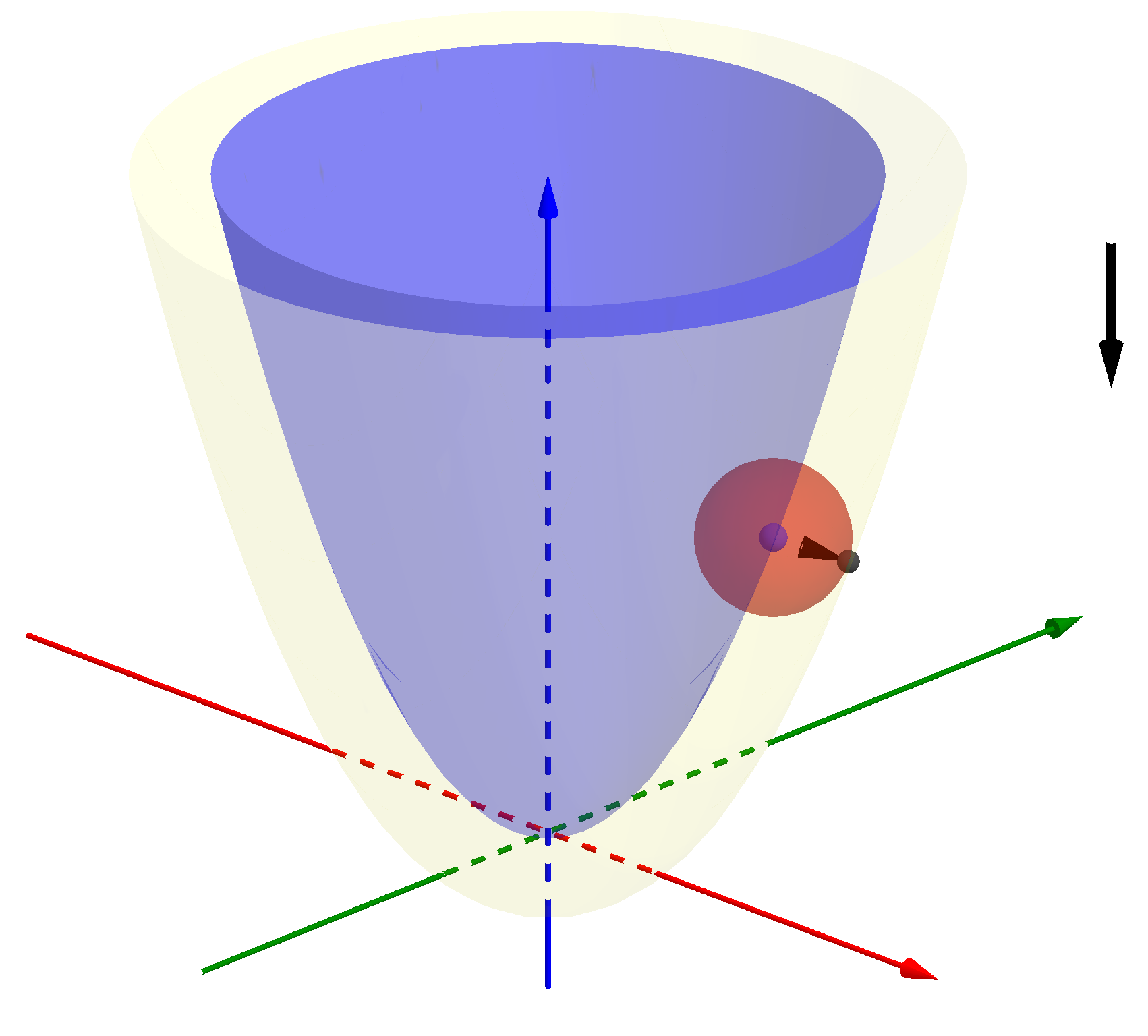}}}
\put(-150,100){$\widetilde\Sigma$}
\put(-125,115){$\Sigma$}
\put(-45,5){$x$}
\put(-20,50){$y$}
\put(-95,115){$z$}
\put(-95,20){$O$}
\put(-65,65){$C$}
\put(-53,60){$\vec n$}
\put(-40,60){$P$}
\put(0,100){$\bf g$}
}
\end{center}
\caption{\small The homogeneous ball on a convex surface of revolution.}
\label{Fig:BallonSurface}
\end{figure}

Let us denote by $n = n(x,y) $ the outward normal unit vector to $\Sigma$ with components $(n_1, n_2, n_3)$ given by
$$
\frac{n_1}{n_3} = 2x\phi', \quad \frac{n_2}{n_3} = 2y\phi' \quad \mbox{and} \quad n_3 = -\frac{1}{\sqrt{1+4(x^2 +y^2)(\phi')^2}}. 
$$
If $\omega=(\omega_1,\omega_2,\omega_3)$ is the angular velocity of the ball in the space frame, then the Lagrangian of the holonomic system on $TQ$ is
\begin{equation}\label{Ex:Ball:Lagrangian}
  L(x,y,g,\dot x,\dot y,\omega) = \frac{{\bf m}}{2n_3^2} \left((1-n_2^2)\dot x^2 + 2 n_1n_2\, \dot x\dot y + \dot y^2(1-n_1^2)\right) + \frac{1}{2} \langle \mathbb{I} \omega,\omega\rangle - {\bf m} {\bf g} \phi\,,
\end{equation}
where  ${\bf g}$ denotes the gravity acceleration and $\bI{}$ the moment of inertia of the sphere with respect to its center of mass.

\noindent {\bf Geometry of the constrained system.} 
The ball rotates without sliding on the surface $\widetilde \Sigma$, and hence the nonholonomic constraints equations are 
\[
  \dot x  = - r \left( \omega_2 n_3 - \omega_3 n_2 \right)\,,\qquad
  \dot y  = - r \left( \omega_3 n_1 - \omega_1 n_3\right).
\]
We denote by $\{X_1^R,X_2^R,X_3^R\}$ the right invariant vector fields on $SO(3)$ and by $\{\rho_1,\rho_2,\rho_3\}$ the right Maurer-Cartan 1-forms, that form a basis
of $T^*SO(3)$ dual to $\{X_1^R,X_2^R,X_3^R\}$. Then the constraint 1-forms are given by
\[
  \epsilon^1 := dx - r \left( n_2 \rho_3 - n_3\rho_2  \right)\,,\qquad 
  \epsilon^2 := dy - r \left( n_3 \rho_1 - n_1 \rho_3 \right)\,.
\]
The constraint distribution $D$ defined by the annihilator of $\epsilon^1$ and $\epsilon^2$ has fiber, at $q= (x,y,g)$, given by
\begin{equation}\label{Ex:Ball:D}
  D_{q} = \textrm{span}\left\{ Y_x := \partial_x  -\frac{1}{r n_3}(n_2 X_n - X_2^R), \   Y_y := \partial_y +\frac{1}{r n_3}(n_1X_n - X_1^R), \   X_n \right\}\,,
\end{equation}
where $X_n: = n_1\, X_1^R+n_2\,X_2^R+n_3\,X_3^R$.
Consider the basis of $TQ$
\begin{equation}\label{eq:TQ-basis}
  {\bf B}_{TQ} = \left\{ Y_x,Y_y,X_n,Z_1,Z_2 \right\}\,,
\end{equation}
where $  Z_1 := \frac{1}{r n_3} X_2^R-\frac{n_2}{r n_3} X_n$ and $Z_2 := -\frac{1}{r n_3} X_1^R+\frac{n_1}{r n_2} X_n$ with associated coordinates 
$(\dot x,\dot y,\omega_n,w^1,w^2)$, for $\omega_n = n\cdot \omega=n_i \omega_i$, the normal component of the angular velocity $\omega$.
The dual frame of \eqref{eq:TQ-basis} is
\begin{equation}\label{eq:T*Q-basis}
  {\bf B}_{T^*Q} = \left\{ dx,dy, \rho_n,\epsilon^1,\epsilon^2, \right\}\,,
\end{equation}
where $\rho_n = n_i\rho_i$, with associated coordinates $(p_x,p_y,p_n,M_1,M_2)$ on $T^*Q$.  
The manifold $\M = \kappa^\sharp(D)$ is given by 
$$
\M = \left\{(x,y,g;p_x,p_y,p_n,M_1,M_2) \ : \ M_1 = \tfrac{-I}{I+mr^2}p_x, \ M_2 = \tfrac{-I}{I+mr^2}p_y \right\}.
$$

\noindent {\bf The symmetries.}
Consider the action $\Psi$ of the Lie group $G = SO(2)\times SO(3)$ on the manifold $Q$ 
given, at each $(x,y,g)\in Q$ and $(h_\theta, h)\in SO(2) \times SO(3)$,  by 
$$
\Psi_{(h_\theta,h)} (x,y,g) = (h_\theta (x,y)^t,  \tilde h_\theta g h ), 
$$
where $\tilde h_\theta$ is the $3\times 3$ rotational matrix of angle $\theta$ with respect to the $z$-axis.  In other words, $SO(3)$ acts on the right on itself and $SO(2)$ acts by rotations about the figure axis of the surface $\Sigma$. 
The Lagrangian \eqref{Ex:Ball:Lagrangian} and the constraints \eqref{Ex:Ball:D} are invariant with respect to the lift of this action to $TQ$ given by $\Psi_{(h_\theta,h)} (x,y,g,\dot x, \dot y, \omega) = (h_\theta (x,y)^t,  \tilde h_\theta g h, h_\theta  (\dot x, \dot y)^t, \omega )$.  The invariance of the kinetic energy and the constraints $D$ ensures that $\Psi$ restricts 
to an action on $\M$, that leaves the equations of motion invariant. 

The Lie algebra $\g$ of $G$ is isomorphic to $\bR{}\times\bR{3}$ with the infinitesimal generators 
$$
(1;{\bf 0})_Q = -y\partial_x+x\partial_y+X^R_3 \quad \mbox{and} \quad (0;{\bf e}_i)_Q = \alpha_i\,X_1^R+\beta_i\,X_2^R+\gamma_i\,X_3^R, \textrm{ for } i=1,2,3,
$$
where ${\bf e}_i$ denotes the $i$-th element of the canonical basis of $\bR{3}$ and, 
$\alpha = (\alpha_1,\alpha_2,\alpha_3)$, $\beta=(\beta_1,\beta_2,\beta_3)$ 
$\gamma=(\gamma_1,\gamma_2,\gamma_3)$ the rows of the matrix $g\in SO(3)$.  Observe that 
$(1;{\bf 0})_Q$ is an infinitesimal generator of the $SO(2)$-action and the others are
infinitesimal generators of the $SO(3)$-action. 
We then underline that the $G$-symmetry satisfies the dimension assumption and it is proper and free whenever $(x, y) \neq (0, 0)$ (note that the rank of $V$ is 3 for $(x, y) = (0, 0)$ and it is 4 elsewhere, showing that the action is not even locally free). 

Let us denote by $\widetilde Q\subset Q$ and $\widetilde \M\subset \M$ the manifolds where the $G$-action is free, i.e. $(x,y)\neq(0,0)$.
The vertical distribution $S= D\cap V$ on $\widetilde{Q}$ has rank 2 with fibers
$$
S_{q} = \textrm{span}\{ Y_1 := -y Y_x + x Y_y, Y_2 := X_n \}.
$$
The bundle $\g_S\rightarrow Q$ has a global basis $\mathfrak{B}_{\g_S}$ of sections given by 
\[
  \mathfrak{B}_{\g_S}= \left\{ \xi_1 := \left(1; \frac{x}{r\,n_3},\frac{y}{r\,n_3},0\right), \xi_2 := (0; n\,g)   \right\}
\]
and we check that $(\xi_1)_Q = Y_1$ and $(\xi_2)_Q =Y_2$. Finally we observe that $\widetilde Q/G$ has dimension 1 ($\rho_{\tilde{Q}} : \tilde{Q}\to \tilde{Q}/G$ is given by $\rho_Q(x,y,g) = x^2 +y^2$) and hence the $G$-symmetry satisfies Conditions $(\cA 1)$-$(\cA 4)$ on $\widetilde Q$.

\medskip

\noindent {\bf The existence of horizontal gauge momenta.}    Using the basis \eqref{eq:TQ-basis} and the definition of $S$, we consider the decomposition 
$$
T\widetilde Q= H \oplus S \oplus W,
$$
where $W$ is a vertical complement of the constraints given by $W:= \textrm{span}\{Z_1,Z_2\}$ and $H:=S^{\perp}\cap D$ is generated by $X_0:= xY_x + yY_y$. As in Example \ref{Ex:Solids}, in this case, it is enough (and straightforward using that $n_3(x,y)$ is rotational invariant and that $[X_1^R,X_2^R] = -X_3^R$ for all cyclic permutations) to check that $\kappa([Y_1, Y_2],Y_1)= 0$ and $\kappa([Y_1,Y_2],Y_2) = 0$ to guarantee that the kinetic energy is strong invariant on $S$.  Finally, we also see that $\kappa(X_0,[Y_i,X_0]) = 0$, for $i=1,2$ . Therefore, following Theorem~\ref{T:Main}, the system admits two $G$-invariant (functionally independent) horizontal gauge momenta ${\mathcal J}_1$ and ${\mathcal J}_2$, 
showing that the first integrals obtained in \cite{Routh,hermans,zenkov1995,BMK2002,FGS2005}
can be obtained from the symmetry of the system as horizontal gauge momenta.  


\medskip

\noindent {\bf The computation of the 2 horizontal gauge momenta.}
We now characterize the coordinate functions of the horizontal gauge symmetries written in the basis $\mathfrak{B}_{\g_S}$ on $\widetilde Q$.  That is, let us denote by 
$$
J_1:= {\bf i}_{Y_1} \Theta = -yp_x+xp_y \qquad \mbox{and} \qquad J_2:= {\bf i}_{Y_2} \Theta = p_n.
$$
Using the orbit projection $\rho_{\tilde Q}:\tilde{Q}\to \tilde{Q}/G$, a $G$-invariant function $f$ on $Q$ can be thought as depending on the variable $\tau=x^2+y^2$, i.e., $f = f(\tau)$. Following Theorem~\ref{T:Main}$(ii)$, a function $\mathcal{J}= f_1 J_1+ f_2J_2$ for $f_1,f_2\in C^\infty(Q)^G$ is a horizontal gauge momenta if and only if $(f_1,f_2)$ is a solution of the linear system of ordinary differential equations on $\widetilde Q/G$, 
\begin{equation}\label{Ex:Ball:ODE}
R \left( \!\!\begin{array}{c} f_1 \\ f_2 \end{array} \! \! \right)= \left(\!\!\begin{array}{c} \bar{X}_0(f_1) \\ \bar{X}_0(f_2) \end{array} \!\! \right) \quad \mbox{where} \quad R =   2\tau\left(  \begin{array}{cc} 0   & -2 \frac{rI}{E} n_3^2(2(\phi')^3 - \phi'') \\ \tfrac{A}{r}n_3^2     & 0 \end{array} \!\! \right ) 
\end{equation}
for $A = \phi' + 2\tau\phi''$ and $\bar{X}_0= T\rho_{\widetilde Q}(X_0)= 2\tau \tfrac{\partial}{\partial\tau}$. The matrix $R$ is computed using that $R = [\kappa|_S]^{-1} [N]$ 
where 
\begin{equation*}
[N] = \frac{2I}{r}\tau\left( \!\!  \begin{array}{cc} 0   & -2\tau n_3^2(2(\phi')^3 - \phi'') \\ An_3^2     & 0 \end{array} \!\! \right ) \quad \mbox{and} \quad  [\kappa|_S]  = \left(\!\! \begin{array}{cc} \tfrac{E}{r^2}\tau  & 0  \\ 
0 & I \end{array} \!\! \right).
\end{equation*}
 
Since this system admits two independent solutions $f^1 = (f_1^1, f_2^2)$ and $f^2 = (f_1^2, f_2^2)$ on $\widetilde Q/G$, then the nonholonomic system admits two $G$-invariant horizontal gauge momenta $\mathcal{J}_1$, $\mathcal{J}_2$ defined on $\widetilde \M$ of the form
\begin{equation}\label{Ex:Ball:J1J2}
\mathcal{J}_1 = f_1^1 J_1 +  f_1^2 J_2 \qquad \mbox{and} \qquad \mathcal{J}_2 = f_1^2 J_1 +  f_2^2 J_2.
\end{equation}
recalling that $J_1 = -yp_x+xp_y$ and $J_2 = p_n$

\begin{remark}\label{R:Ball:Routh}  Let us denote by $\bar{\mathcal J}_1$, $\bar{\mathcal J}_2$ the functions on $\widetilde \M/G$ associated to \eqref{Ex:Ball:J1J2}.
\begin{itemize}
 \item[$(i)$] The (reduced) first integrals $\bar{\mathcal J}_1$, $\bar{\mathcal J}_2$ can be extended by continuity to the {\it differential space} $\M/G$ and thus $\mathcal{J}_1$, $\mathcal{J}_2$ are $G$-invariant functions on $\M$ (see \cite{FGS2005} for details) and in this case we say that the system admits $2=\textup{rank}(\g_S)$ horizontal gauge momenta.
 \item[$(ii)$] The system of differential equations \eqref{Ex:Ball:ODE} can be written as  
 $$
 R_1 f_2 = f_1' \qquad \mbox{and} \qquad R_2 f_1 = f_2',
 $$
 where $R_1 = R_1(\tau) = -2\tfrac{rI}{E} n_3^2(2(\phi')^3 - \phi'')$ and 
 $R_2 = R_2(\tau) = \tfrac{A}{r} n_3^2$.
 Hence  $\bar{\mathcal J}_1$, $\bar{\mathcal J}_2$ are first integrals of Routh type 
 found in \cite{hermans} (see also \cite{CDS,zenkov1995,BMK2002,sansonetto}) and shown to be horizontal gauge momenta in 
 \cite{FGS2005,FGS2009}.  
\end{itemize}
\end{remark}

\noindent {\bf Integrability and reconstruction.} The reduced integrability of this system was established in \cite{Routh}
and its complete broad integrability has been extensively studied in \cite{hermans,zenkov1995,BMK2002,FGS2005,FG2007}, using the existence of first integrals
$\cJ_1$ and $\cJ_2$, without relating their existence to the symmetry group. The symmetry
origin of $\cJ_1$ and $\cJ_2$ was announced in \cite{BGM96}, and then proved in \cite{sansonetto,FGS2009}.
Here we want to stress how Theorem~\ref{T:Main} can be applied and therefore the
reduced integrability of the system is ensured. That is, $\bar{\mathcal J}_1$, $\bar{\mathcal{J}}_2$, $H_\red$ are first integrals of the reduced dynamics $X_\red$ defined on the manifold $\tilde\M/G$ of dimension 4.  Moreover, as proved in \cite{hermans,zenkov1995} the
reduced dynamics is made of periodic motions or of equilibria, and hence, since the symmetry group
is compact, the complete dynamics is generically quasi-periodic on tori of dimension 3 
(see Theorem \ref{T:reconstruction} and \cite{hermans,FG2007}). 
Indeed one could can say more on the geometric structure of the phase space $\widetilde \M$
of the complete system, it is endowed with the structure of a fibration on tori of dimension at most 3 
(see \cite{FG2007} for a detailed study of the geometry
of the complete system on $\widetilde \cM$).

\medskip

\noindent {\bf Hamiltonization.} Even though the hamiltonization of this example has been studied in \cite{BalYapu19}, in this section we see the hamiltonization as a consequence of Theorem \ref{T:Main} and how the resulting Poisson bracket on $\widetilde \M/G$ depends on the linear system of ordinary differential equations \eqref{Ex:Ball:ODE}.  

By Theorem~\ref{T:BalYapu19},  the nonholonomic system is {\it hamiltonizable by a gauge transformation}; that is, on $\widetilde\M/G$ the reduced nonholonomic system is described by a Poisson bracket with 2-dimensional leaves given by the common level sets of the horizontal gauge momenta $\bar{\mathcal J}_1$, $\bar{\mathcal J}_2$, induced by \eqref{Ex:Ball:J1J2}, (recall that $\bar{\mathcal J}_i$ are the functions on $\widetilde\M/G$, such that $\rho^*(\bar{\mathcal J}_i) = {\mathcal J}_i$).

Following Theorem~\ref{T:FormulaB}, we compute the 2-form $B_{\mbox{\tiny{HGS}}}$, defining the dynamical gauge transformation, 
using the momentum equation \eqref{Ex:Ball:ODE}.  Since $dY^1|_D=0$, then 
\begin{equation}
   B_{\mbox{\tiny{HGS}}}  : =   \langle J, \mathcal{K}_\subW\rangle - p_1 R_{12} {\mathcal X}^0\wedge {\mathcal Y}^2 + p_2 R_{21}X^0\wedge {\mathcal Y}^1  + p_2 d\mathcal{Y}^2,
\end{equation}
where $\mathcal{X}^0 = \tau_\subM^*X^0$ and $\mathcal{Y}^i = \tau_\subM^*Y^i$. 
That is,  
\begin{equation*}
 \begin{split}
  \langle J, \mathcal{K}_\subW\rangle |_\C& =  M_1\, d\epsilon^1|_\C + M_2 \, d\epsilon^2|_\C, \\
  & = -\frac{Ir}{E(x^2+y^2)} \left( p_1 (\tfrac{1}{rn_3^2} + 2n_3^2A) {\mathcal X}^0\wedge {\mathcal Y}^2 + p_0 n_3(2\phi' n_3 + \tfrac{1}{r}{\mathcal Y}^1\wedge {\mathcal Y}^2 \right)|_\C,
 \end{split}
\end{equation*}
and using that $d{\mathcal Y}^2|_\C = \frac{(x^2+y^2)}{n_3} p_2 X^0\wedge {\mathcal Y}^1|_\C$ we obtain
\begin{equation}\label{Ex:Ball:B}
B_{\mbox{\tiny{HGS}}} =(x^2+y^2)p_2 ( \tfrac{1}{n_3}+2\tfrac{A}{r} n_3^2 )  {\mathcal X}^0\wedge {\mathcal Y}^1 + \tfrac{rI}{E} ( \tfrac{1}{rn_3} + 2\phi') (p_1      {\mathcal X}^0\wedge {\mathcal Y}^2 - p_0 n_3^2   {\mathcal Y}^1\wedge {\mathcal Y}^2).
\end{equation}

\begin{remark} Since the action is not free, $\M/G$ is a semialgebraic variety that consists in two strata: a singular 1-dimensional stratum corresponding 
to the points in which the action is not free; and the four dimensional regular stratum $\widetilde \M/G$ (where the action is free).   Moreover, analyzing the change of coordinates between ${\bf B}_{T^*Q}$ and ${\mathfrak B}_{T^*Q}$ we get 
$$
\tau= x^2 + y^2, \ p_0 = xp_x+ yp_y,\  p_1= -yp_x+xp_y, \ p_2 = p_n, 
$$
and adding $p_3 = p_x^2+p_ y^2$ we recover the coordinates used in \cite{hermans,FGS2005} 
on $\widetilde \M/G$. 
\end{remark}

\begin{remark}
Since the convexity of the function $\phi$ that parametrizes the surface $\Sigma$ is not strictly used, this example also describes the geometry and dynamics of a homogeneous ball rolling on surface of revolution such that its normal vector fields has $n_3\neq 0$. 
\end{remark}


\subsection{Comments on the hypothesis of Theorem \ref{T:Main}: examples and counterexamples}\label{S:Comments}

Theorem~\ref{T:Main} shows that a nonholonomic system with symmetries satisfying certain hypotheses admits the existence of $k$ functionally independent
$G$-invariant horizontal gauge momenta. 
Next, assuming Conditions $(\cA 1)$-$(\cA 3)$, we study what may happen if the other hypotheses of 
Theorem \ref{T:Main} are not satisfied. In particular we study three cases: when the metric is not 
strong invariant,  when $\kappa(X_0, [X_0,Y])$ is different from zero, and finally when Condition 
$(\cA 4)$ is not verified (i.e., $\textup{dim}(Q/G)\neq 1$).  For each case we give examples 
and counterexamples to illustrate our conclusions.  

%
%


\subsubsection*{Analyzing the strong invariance condition and $\kappa(X_0, [X_0,Y])= 0$}

Consider a nonholonomic system $(\cM,\Omega_\subM|_\C,H_\subM)$ with a $G$-symmetry 
satisfying Conditions $(\cA 1)$-$(\cA 4)$. 
Suppose that $(f_1, ..., f_k)$ is a solution of the system of differential 
equations \eqref{Eq:ODESystem}, then, from \eqref{Eq:MomEq-Coord}, 
we observe that $\mathcal{J}= f_jJ_i$ is a horizontal gauge momentum if and only if  
$$
  f_i \kappa(X_0, [Y_i, X_0]) = 0 \quad \mbox{and} \quad   f_i (\kappa(Y_j,[Y_i,Y_l]) + \kappa(Y_l,[Y_i,Y_j)) = 0, \mbox{ for each } j,l.
$$  
for a $S$-orthogonal horizontal space $H$.
That is, in some cases, even if $\kappa(X_0, [X_0,Y_{i_0}])\neq 0$ for some $Y_{i_0} \in \Gamma(S)$ or the metric is not strong invariant,  we may still have a horizontal gauge momentum.

We now present two examples that show the main features of these phenomenon. 

\noindent {\bf The metric is not strong invariant on $S$.} The following is a mathematical example, that has the property that the metric is not strong invariant, and it admits only 1 horizontal gauge momenta even though the rank of the distribution $S$ is 3.
Precisely, consider the nonholonomic system on the manifold $Q=\R^3\times SE(2)$ with coordinates $(u,v,x)\in \R^3$ and $(y,z,\theta)\in SE(2)$ with Lagrangian given by 
$$
L(q,\dot q) = \frac{1}{2}\left(u^2+ v^2+ \dot x^2 +\dot y^2 + \dot z^2 + \dot \theta^2+ 4(\sin\theta \, \dot z + \cos\theta\, \dot y) \dot \theta\right),
$$
and constraints 1-forms given by
$$
\epsilon^u = du-(1+\cos x)d\theta \qquad \mbox{and} \qquad \epsilon^v = dv- \sin x d\theta.
$$
The symmetry is given by the action of the Lie group $G=\R^2\times SE(2)$ defined, at each $(a,b;c,d,\beta)\in G$, by
$$
\Psi((a,b;c,d,\beta),(u,v,x,y,z,\theta)) = (u+a,v+b,x, h_\beta \left(\!\begin{smallmatrix*} y \\ z \end{smallmatrix*}\!\right) +  \left(\!\begin{smallmatrix*} c \\ d \end{smallmatrix*}\!\right) , \theta +\beta),
                                                                          $$
where $h_\beta$ is the $2\times 2$ rotational matrix of angle $\beta$.  The distribution $S=D\cap V$ is generated by the $G$-invariant vector fields $\{Y_\theta, Y_1,Y_2\}$ given by 
$$
Y_\theta := \partial_\theta  + (1+\cos x)\partial_u + \sin x\partial_v, \, Y_1:= \cos\theta\partial_y +\sin\theta\partial_z, \, Y_2:= -\sin\theta\partial_y+\cos\theta\partial_z,
$$
and $X_0=\partial_x$ generates $H=S^\perp\cap D$. It is straightforward to check that Conditions $(\cA 1)$-$(\cA4)$ are satisfied and that $\kappa(X_0, [X_0,Y])= 0$ for all $Y\in \Gamma(S)$. However, the metric is not strong invariant on $S$: $\kappa (Y_2,[Y_\theta, Y_1]) = 1$ and $\kappa(Y_\theta, [Y_1,Y_2]) = 0$. From \eqref{Eq:MomEq-Coord}, we can observe that $\mathcal{J} =2p_1 + p_\theta$ is the only horizontal gauge momentum of the system in spite of the rank of $S$ being 3 (where, as usual, $p_1 = {\bf i}_{Y_1}\Theta_\subM$ and $p_\theta = {\bf i}_{Y_\theta}\Theta_\subM$). 

\medskip

\noindent {\bf Dropping condition $\kappa(X_0, [X_0,Y])= 0$.} 
We illustrate with a multidimensional nonholonomic particle the different scenarios 
obtained when $\kappa(X_0, [X_0,Y]) \neq 0$ for a section $Y\in \Gamma(S)$ 
(see Table~\ref{Tab:MultiNhPart}). 

Consider the nonholonomic system on $\bR{5}$ with Lagrangian $L(q,\dot q) = \frac{1}{2}\dot q\cdot \kappa\,\dot q - V(x_1)$, where $\kappa$ 
is the  kinetic energy metric 
  \[\kappa=\begin{psmallmatrix*}
    1& 0 & 1 & 0 &1\\
    0& 1 & 0 & 0 &0\\
    1& 0 & 1 & 0 &0\\
    0& 0 & 0 & 1 &1\\
    1& 0 & 0 & 1 &1\\
  \end{psmallmatrix*},\]
  and with the nonintegrable distribution $D$ given, at each $q=(x_1,\ldots,x_5)\in \bR{5}$, by 
  \[\begin{aligned}
    D_q  =\textrm{span}\{ & D_1= f(x_1)\, \partial_{x_1}+b(x_1)\, \partial_{x_3}+ c(x_1)\,\partial_{x_4}\,,
    D_2 = h(x_1)\, \partial_{x_1}+g(x_1)\, \partial_{x_2}\,, \\
    & D_3 = d(x_1)\, \partial_{x_1}+j(x_1)\,\partial_{x_4}+l(x_1)\,\partial_{x_5}\}\,,
  \end{aligned}\]
  where $b(x_1),c(x_1),d(x_1),f(x_1),g(x_1),h(x_1),j(x_1),l(x_1)$ 
  are functions on $\bR{5}$ depending only on the coordinate $x_1$.
  The group $\bR{4}$ of translations along the $x_2$, $x_3$, $x_4$ and $x_5$ directions
  acts on the system and leaves both the Lagrangian and the nonholonomic constraints invariant. It is straightforward to see that this $G$-symmetry satisfies Conditions $(\cA 1)$-$(\cA4)$.
  The fiber of the distribution $S$ over $q\in Q$ is $S_q = \textrm{span} \{
  Y_1 := f(x_1) D_2 - h(x_1) D_1\,,   Y_2 := h(x_1) D_3 - d(x_1) D_2 \}$. Since the translational Lie group $\R^4$ is abelian then the kinetic energy is strong invariant on $V$ (see Example~\ref{Ex:StrongAbelian}).  
  The distribution $H = S^\perp \cap D$ is generated by the vector field $X_0 = \beta_1(x_1)\,D_1 + \beta_2(x_1)\, D_2 + \beta_3(x_1)\, D_3$, for $\beta_1$, $\beta_2$ and $\beta_3$ suitable functions (defined on $\bR{5}$ but depending only on the coordinate $x_1$).

 
   
   For particular choices of the functions $b(x_1),c(x_1),d(x_1),f(x_1),g(x_1),h(x_1),j(x_1),l(x_1)$ the two terms $\kappa(X_0,[Y_1,X_0])$ and $\kappa(X_0,[Y_2,X_0])$ may not vanish.  The computations and their expression are rather long    
   and were implemented with Mathematica\@.  The next table shows different situations that we obtain:
   
   {\small\begin{table}[ht] \label{Tab:MultiNhPart}\begin{center} 
\begin{tabular}{|c|c|}
\hline
 \multicolumn{2}{|c|}{multidimensional nonholonomic particle ($\textup{rank}(S) =2$)} \\
 \hline
behaviour of $\kappa(X_0, [X_0,Y])$ & {\bf{\scriptsize{ \begin{tabular}{c} $\sharp$ horizontal \\ gauge momenta \end{tabular} }}} \\
  \hline
&\\
  $\kappa(X_0,[Y_1,X_0]) = 0 $  and  $\kappa(X_0,[Y_2,X_0]) \neq 0 $   & 0 \\
  \hline
  &\\
   $\kappa(X_0,[Y_1,X_0]) = 0 $  and $\kappa(X_0,[Y_2,X_0]) \neq 0 $   & 1  \\
  \hline
  &\\
  $\kappa(X_0,[Y_1,X_0]) \neq 0 $ and $\kappa(X_0,[Y_2,X_0]) \neq 0 $  & 0 \\
  \hline
\end{tabular}
\end{center}
\end{table}

}



\subsubsection*{Cases when Condition\,$(\cA 4)$ is not satisfied (or $\textup{rank}(H)\neq 1$)} \label{Ss:rankHnot1}

When Condition $(\cA4)$ is not verified, it is still possible to work with the {\it momentum equation} 
stated in Proposition~\ref{Prop:MomEq1}. Basically, for the case when $\textup{rank}(H) =0$ 
we still have $\textup{rank}(S)$ horizontal gauge momenta, while if $\textup{rank}(H)>1$ 
we cannot say anything. 

\noindent {\bf If $\textup{rank}(H)=0$.}  In this case, $TQ = V$ which means that $Q \simeq G$. That is, consider a nonholonomic system $(L,D)$ on a Lie group $G$ for which the left action is a symmetry of the system.  Since the only $G$-invariant functions are constant, we need to check that, for a basis $\mathfrak{B}_{\g_S} = \{\xi_1, ...,\xi_k\}$ of $\Gamma(\g_S)$, the momentum equation \eqref{Eq:MomEq} is satisfied only for constant functions $f_i = c_i$. 
In this case, since $X_\nh \in \Gamma(\V)$, the coordinate momentum equation \eqref{Eq:MomEq-Coord}, for $f \in C^\infty(Q)^G$, remains
\begin{equation*}
f_i v^lv^j \kappa(Y_j,[Y_i,Y_l]) = 0. 
\end{equation*}
The constant functions $f_i = c_i$ are $k$ (independent) solutions of the momentum equation if and only if the kinetic energy is strong invariant on $S$, and hence the sections of the basis $\mathfrak{B}_{\g_S}$ are horizontal gauge symmetries. 

As illustrative examples, see the vertical disk and the Chaplygin sleigh in \cite{FGS2008} and \cite{Bloch} respectively.

\noindent {\bf If $\textup{rank}(H)>1$.} \ In this case we cannot assert the existence of a global basis of $H$. However, in some examples the horizontal space $H$ may admit a global basis which we denoted by $\{X_1,...,X_n\}$ for $n=\textup{rank}(H)$. 
In this case, we observe that the second summand of the momentum equation \eqref{Eq:MomEq-Coord} gives the condition 
\[
 \kappa(X_\alpha, [Y_i,X_\beta]) -  \kappa(X_\beta, [Y_i,X_\alpha])=0  \quad \mbox{for all  } \alpha, \beta =1,..., n
\]
and the third summand gives a system of {\it partial} differential equations whose solutions induce the horizontal gauge momenta.  As an illustrative example, we can work out the Chaplygin ball \cite{chapsphere,duistermaat2004}: this example has a $G$-symmetry so that $\textup{rank}(S)=1$ and $\textup{rank}(H)=2$ with a global basis (see e.g. \cite{GN2008,balseiro2014}). However,  working with the momentum equation \eqref{Eq:MomEq}, it is possible to show that the system admits 1 horizontal gauge momentum, recovering the known result in  \cite{chapsphere,duistermaat2004, BorisovMamaev2001}.

\appendix

\section{Appendix: Almost Poisson brackets and gauge transformations} \label{A:Hamiltonization}

\noindent{\bf Almost Poisson brackets}.  
An {\it almost Poisson bracket} on a manifold $M$ is a bilinear bracket $\{\cdot, \cdot\}: C^\infty(M)\times C^\infty(M) \to C^\infty(M)$ that is skew-symmetric and satisfies Leibniz identity (but does not necessarily satisfy Jacobi identity).  Due to the bilinear property, an almost Poisson bracket induces a bivector field $\pi$ on $M$ defined, for each $f,g\in C^\infty(M)$ by 
$$
\pi(df, dg) = \{f,g\}.
$$
The vector field $X_f:= \{\cdot, f\}$ is the {\it hamiltonian vector field of $f$}.  Equivalently, $X_f = - \pi^\sharp(df)$, where $\pi^\sharp: T^*M\to TM$ is the map such that for $\alpha, \beta\in T^*M$,  $\beta(\pi^\sharp(\alpha)) = \pi(\alpha, \beta)$. 
The {\it characteristic distribution} of the bracket $\{\cdot,\cdot\}$ is the distribution on $M$ generated by the hamitonian vector fields.  

An almost Poisson bracket $\{\cdot, \cdot\}$ is {\it Poisson} when the Jacobi identity is satisfied, i.e.,
$$
\{f,\{g,h\}\} + \{g,\{h,f\}\} + \{h,\{f,g\}\} = 0, \qquad \mbox{for  } f,g,h \in C^\infty(M).
$$
Equivalently, a bivector field $\pi$ is Poisson if and only if $[\pi, \pi]=0$ where $[ \cdot, \cdot ]$ is the Schouten bracket, see e.g. \cite{MarsdenRatiuBook}.  The characteristic distribution of a Poisson bracket is integrable and foliated by symplectic leaves.

\begin{definition}\label{Def:Twisted}\cite{SeveraWeinstein}
 An almost Poisson bracket $\{\cdot, \cdot\}$ on $M$ is {\it twisted Poisson} if there exists a closed 3-form $\Phi$ on $M$ such that, for each $f,g,h\in C^\infty(M)$
 $$
 \{ f, \{g,h\}\} +  \{ g, \{h,f\}\} +  \{ h, \{f,g\}\} = \Phi (X_f, X_g,X_h), 
 $$
 where $X_f, X_g,X_h$ are the hamiltonian vector fields of $f,g,h,$ with respect to $\{\cdot, \cdot\}$.  In other words, a bivector field $\pi$ on $M$ is twisted Poisson if $[\pi,\pi] = \frac{1}{2} \pi^\sharp(\Phi)$.
 \end{definition}

 \begin{remark} \label{R:RegularTwisted}
The characteristic distribution of a twisted Poisson bracket is integrable and it is foliated by almost symplectic leaves. 
Conversely, it was shown in \cite{BN2011}, that any regular almost Poisson bracket with integrable characteristic distribution is a twisted Poisson bracket. 
\end{remark}

A regular almost Poisson bracket $\{\cdot, \cdot\}$ on $M$ is determined by a 2-form $\Omega$ and a distribution $F$ defined on $M$ so that $\Omega|_F$ is nondegenerate. In fact,  for $f\in C^\infty(M)$,
 \begin{equation}\label{Def:RegBracket}
 X_f = \{\cdot , f\} \quad \mbox{if and only if} \quad {\bf i}_{X_f}\Omega|_F = df |_F,
 \end{equation}
 (actually, the bracket is determined by the nondegenerate 2-section $\Omega|_F$ on $M$). The distribution $F$ is the characteristic distribution of the bracket.  If $F$ is integrable, then $\{\cdot, \cdot\}$ is a (regular) twisted Poisson bracket by the 3-form $\Phi=d\Omega$ ($\Omega$ is not necessarily closed). A Poisson bracket has $F$ integrable and $\Omega$ closed. 

\medskip

\noindent{\bf Gauge transformations of a (regular) bracket by a 2-form}.

\begin{definition}\label{Def:GaugeTransf}\cite{SeveraWeinstein}
Consider a (regular) bracket $\{\cdot, \cdot \}$ on the manifold $M$ as in \eqref{Def:RegBracket} and a 2-form $B$ satisfying that $(\Omega + B)|_F$ is nondegenerate.  A {\it gauge transformation of $\{\cdot, \cdot \}$ by the 2-form $B$} defines a bracket $\{\cdot,\cdot\}_\B$ on $M$ given, at each $f\in C^\infty(M)$, by
$$
{\bf i}_{X_f}(\Omega +B)|_F = df|_F \quad \mbox{if and only if} \quad X_f = \{\cdot , f\}_\B.
$$
In this case, we say that the brackets $\{\cdot,\cdot\}$ and $\{\cdot,\cdot\}_\B$ are {\it gauge related} .
\end{definition}

\begin{remark} \begin{enumerate} \item[$(i)$] The brackets $\{\cdot,\cdot\}$ and $\{\cdot,\cdot\}_\B$ have the same characteristic distribution $F$. Therefore, if an almost Poisson bracket has a nonintegrable characteristic distribution, all gauge related brackets will be almost Poisson with a  nonintegrable characteristic distribution.
 \item[$(ii)$]  If the bracket $\{\cdot , \cdot\}$ is twisted Poisson by a 3-form $\Phi$, then the gauge related bracket $\{\cdot , \cdot\}_\B$ is twisted Poisson by the 3-form $(\Phi + dB)$.  Moreover, they share the characteristic foliation but the 2-form on each leaf $F_\mu$ changes by the term $B_\mu = \iota_\mu B$ for $\iota_\mu : F_\mu \to M$ the inclusion.
 
 \item[$(iii$)] The original definition of  a {\it gauge transformation} in \cite{SeveraWeinstein} was given on Dirac structures and then the 2-form $B$ does not need to satisfy the nondegenerate condition $(\Omega + B)|_F$.
\end{enumerate}
 \end{remark}

\begin{definition}\label{Def:Semi-basic}
Let $\tau: M\to P$ be a vector bundle and let $\alpha$ be a $k$-form on the manifold $M$.  We say that $\alpha$ is {\it semi-basic} with respect to the bundle $M\to P$ if 
$$
{\bf i}_X \alpha = 0 \quad \mbox{for all $X \in TM$ such that $T\tau (X) = 0$} .
$$
The $k$-form $\alpha$ is {\it basic} if there exists a $k$-form $\bar\alpha$ on $P$ such that $\tau^*\bar\alpha = \alpha$.  
\end{definition}

\begin{remark}\label{R:Semi-basic}
 Consider the canonical symplectic 2-form $\Omega_\subQ$ on $T^*Q$. If $B$ is a semi-basic 2-form with respect to the bundle $T^*Q\to Q$, then $\Omega_\subQ+B$ is a nondegenerate 2-form on $T^*Q$.  
\end{remark}

\noindent{\bf Symmetries}. Let us consider an almost Poisson manifold $(M, \{\cdot, \cdot \})$ given as in \eqref{Def:RegBracket} and  a Lie group $G$ acting on $M$ and leaving  $\{\cdot, \cdot \}$ invariant. Then on the reduced manifold $M/G$ there is an almost Poisson bracket $\{\cdot, \cdot\}_\red$ defined, at each $f,g\in C^\infty(M/G)$ by 
$$ 
\{f,g\}_\red \circ \rho = \{\rho^*f, \rho^*g\},
$$
where $\rho: M\to M/G$ is the orbit projection. 

If a $G$-invariant 2-form $B$ satisfies that $(\Omega + B)|_F$ is nondegenerate, then the gauge related bracket  $\{\cdot, \cdot \}_\B$ is $G$-invariant as well. Both brackets $\{\cdot, \cdot \}$ and $\{\cdot, \cdot \}_\B$ can be reduced to obtain the corresponding reduced brackets  $\{\cdot, \cdot \}_\red$ and $\{\cdot, \cdot \}_\red^\B$ on the quotient manifold $M/G$ as the diagram shows:
\begin{equation}\label{Diag:Hamilt}
\xymatrix{  (M, \{\cdot, \cdot \})  \ar[d]_{\mbox{\tiny{reduction} }} \ar[rr]^{\mbox{\tiny{gauge transf. by $B$} } }
&&  (M, \{\cdot, \cdot \}_\B) \ar[d]\\
 (M/G, \{\cdot, \cdot \}_\red) && (M/G, \{\cdot, \cdot \}_\red^\B)   }
\end{equation}

As it was observe in \cite{GN2008,BN2011}, the brackets  $\{\cdot, \cdot \}_\red$ and $\{\cdot, \cdot \}_\red^\B$ can have different properties. More precisely, they are not necessarily gauge related and hence one can be Poisson while the other not.   In fact,  $\{\cdot, \cdot \}_\red$ and $\{\cdot, \cdot \}_\red^\B$ are gauge related if and only if the 2-form $B$ is basic with respect to the principal bundle $M\to M/G$.

\section{Appendix: Some facts on reconstruction theory} \label{app:rec}
The reconstruction of the dynamics from reduced equilibria 
and reduced periodic orbits has been well studied in \cite{field1990,krupa}, 
when the symmetry
group is compact and in \cite{AM1997} in the non--compact case. 
In this subsection we shortly review 
the basic results of reconstruction theory in the simplest framework, 
of free and proper group actions. We consider 
a Lie group $G$ that acts freely and properly on a manifold $M$. The freeness
and properness of the action guarantee 
that the quotient space $M/G$ has the structure of a manifold and $\tau: M\longrightarrow M/G$
is a principal bundle with structural group $G$. 
Let $X$ be a $G$--invariant vector field on $M$, 
then there exists a vector field $\bar X$ on $M/G$, which is $\tau$--related to $X$.
We recall that a $G$--orbit $\cO_{m_0}=G\cdot m_0$, with $m_0\in M$,  is a {\it relative equilibrium}
for $X$, if it is invariant  with respect to the flow of $X$ and its projection
to the reduced space $M/G$ is an equilibrium of
the reduced dynamics $\bar X$.
Moreover a $G$--invariant subset $\cP$ of $M$ is called a {\it relative periodic orbit}
for $X$, if it is invariant by the flow and its projection to the quotient manifold  $M/G$ is a periodic orbit 
of $\bar X$.

Let $\cP$ be a relative periodic orbit and
$\gamma$ a curve in $\cP$. By the periodicity of the 
reduced dynamics, the integral curves of the complete system, that pass 
through $\gamma(0)$, returns periodically, with period $T>0$, 
to the $G$--orbit through $\gamma(0)$.
The freeness of the action of $G$ on $M$ guarantees that
$\forall \gamma$ in $\cP$ there exists a unique $p(\hat{\gamma})$ 
in $G$ such that 
\[
  \phi^X_T(\gamma)=\psi_{p(\hat{\gamma})}(\gamma)\,,
\]
where $\phi^X_T$ is the flow of $X$ at time $T$, $\psi_g$ is the action of $G$ on $M$, 
$\hat \gamma$ is the projection of $\gamma$ on $M/G$ with respect to $\tau$, and
the map $p:\cP\rightarrow G$, $\gamma\mapsto p=p(\hat{\gamma})$ 
is the so--called \emph{phase} \cite{FG2007}.
The phase $p$ is a piecewise smooth map, constant along the orbits of $X$
(i.e. $p\circ \phi^X_t=p$, $\forall t$) and it is equivariant with respect to conjugation, that is
$\quad p(h\cdot \gamma)= h\,p(\hat{\gamma})h^{-1}, \quad \forall h\in G, \forall \gamma\in \cP$. 
Then the following Theorem holds.

\begin{proposition}\cite{field1990,krupa,AM1997}\label{periodic_loops}
  Let $\cP$ be a relative periodic orbit of $X$ on $M$. 
  Then
  \begin{itemize}
    \item[i)] if the group $G$ is compact, the flow of $X$ in $\cP$ is  quasi--periodic 
    with at most $rank\, G+1$ frequencies; 
    \item[ii)] if $G$ is non--compact, the flow of $X$ in $\cP$ is either 
    quasi--periodic, or escaping.
  \end{itemize}
\end{proposition}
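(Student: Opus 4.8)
\textbf{Proof plan for Proposition~\ref{periodic_loops}.}
The plan is to reduce the statement to the structure of the phase map $p:\cP\to G$ and the standard theory of flows that are asymptotic to a one-parameter subgroup. First I would fix a relative periodic orbit $\cP$, a point $\gamma_0\in\cP$, the period $T>0$ of the reduced periodic orbit through $\hat\gamma_0$, and the associated phase element $g_0:=p(\hat\gamma_0)\in G$ characterized by $\phi^X_T(\gamma_0)=\psi_{g_0}(\gamma_0)$. The key observation, which I would establish first, is that the time-$T$ return map on the $G$-orbit through $\gamma_0$ is conjugate to left translation by $g_0$ (using equivariance of the phase and freeness of the action), so that the flow of $X$ restricted to $\cP$ is conjugate to the suspension flow $(t,\gamma)\mapsto(t,\,\text{something})$ whose essential dynamical content is the one-parameter subgroup $t\mapsto \exp(t\zeta)$ of $G$, where $\zeta$ is any element with $\exp(T\zeta)=g_0$ (such a $\zeta$ exists when $G$ is compact or, in the non-compact case, after possibly passing to the structure recalled in Appendix~\ref{app:rec}). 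Concretely, the closure of $\{\exp(t\zeta):t\in\bR{}\}$ in $G$ governs the long-time behaviour.

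For part (i), when $G$ is compact, the closure $\overline{\{\exp(t\zeta):t\in\bR{}\}}$ is a closed connected abelian subgroup of $G$, hence a torus $\bT{s}$ contained in a maximal torus, so $s\le \mathrm{rank}\,G$. Combining this torus with the extra ``suspension'' circle coming from the period-$T$ reduced periodic orbit yields a $\bT{s+1}$-action on (a neighbourhood in) $\cP$ along whose orbits $X$ is a constant (linear) vector field; this is exactly quasi-periodicity with at most $\mathrm{rank}\,G+1$ frequencies, and the orbits give the asserted torus fibration. I would invoke the classical reconstruction results \cite{field1990,krupa,AM1997} here rather than re-derive the torus-closure argument. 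For part (ii), when $G$ is non-compact, the one-parameter subgroup $t\mapsto\exp(t\zeta)$ either has relatively compact image — in which case its closure is again a torus and the same argument as in (i) gives quasi-periodicity — or its image is unbounded, i.e.\ it leaves every compact subset of $G$; lifting this dichotomy through the (proper) principal bundle $\tau:\cP\to\cP/G$ and using properness of the action to transfer ``leaves every compact set of $G$'' to ``leaves every compact set of $\cP$'', one gets precisely the stated alternative between quasi-periodicity and an escaping copy of $\bR{}$.

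The main obstacle I anticipate is the non-compact case: choosing $\zeta$ with $\exp(T\zeta)=g_0$ is not always possible (the exponential map of a non-compact $G$ need not be surjective), so one must instead work with the structure of the phase recalled in Appendix~\ref{app:rec}, decomposing $g_0$ (or rather the cyclic subgroup it generates) and analysing the closure of $\langle g_0\rangle$ in $G$ directly; the dichotomy ``relatively compact versus escaping'' for a cyclic subgroup of a general Lie group, and the transfer of this dichotomy to the total space via properness, are the technical points that need care. All of this is, however, precisely the content of the reconstruction theory cited, so the proof is essentially an application of \cite{field1990,krupa,AM1997} once the identification of the return map with translation by the phase element has been made.
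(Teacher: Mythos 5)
The paper does not actually prove Proposition~\ref{periodic_loops}: it is recalled verbatim from \cite{field1990,krupa,AM1997} (the appendix only sets up the phase map $p$ and then states the result with those citations), so there is no in-paper argument to compare yours against. Your sketch is, in substance, the standard proof from those references, and it is correct in outline: identify the time-$T$ return map on a $G$-orbit in $\cP$ with translation by the phase element $g_0=p(\hat\gamma_0)$, and read off the asymptotics from the closure of the subgroup it generates.

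Two refinements are worth making explicit, both of which you partially anticipate. First, the correct object throughout is the closure $\overline{\langle g_0\rangle}$ of the \emph{cyclic} group generated by the phase, not a one-parameter subgroup through $g_0$: even for compact connected $G$ one should not pass through $\exp(T\zeta)=g_0$, because $\overline{\langle g_0\rangle}$ is in general a compact abelian group whose identity component is a torus $\bT{s}$ with $s\le \textup{rank}\,G$ and whose component group is finite cyclic; the finite part only replaces $T$ by a multiple of $T$, after which the suspension circle of the base periodic orbit contributes the extra frequency, giving at most $\textup{rank}\,G+1$ in total. Second, in the non-compact case the dichotomy of \cite{AM1997} is precisely ``$\overline{\langle g_0\rangle}$ compact'' versus ``the sequence $g_0^n$ leaves every compact subset of $G$,'' and the transfer of the escaping behaviour from $G$ to $\cP$ uses that, for a proper and free action, the orbit map $g\mapsto\psi_g(\gamma_0)$ is a proper embedding; stating this explicitly closes the one genuine gap in your plan. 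With those two points spelled out, your argument is a faithful reconstruction of the cited proofs, which is all the paper itself claims.
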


The non--compact case is the most frequent and also the most interesting, for example one could say more on which of the two behaviours of the dynamics, namely quasi-periodicity or escaping, is ``generic'' by studying the group $G$ (but this goes beyond our scopes, for more details see \cite{AM1997}).    


\begin{remark}
  In \cite{field1990,krupa,AM1997} reconstructions results are given from the point of view
  of Lie Algebras, while \cite{FPZ2020} develops a theory in terms of groups. 
  Moreover \cite{FPZ2020} investigates the structure of the copies of $\bR{}$
  and shows that one can define an intrinsic notion of a certain number of frequencies that
  gives rise to the idea that, in this case, the reconstructed dynamics `spirals' toward a certain direction.
\end{remark}

{\small

}


\begin{thebibliography}{99}


\bibitem{agostinelli1956}
     \newblock C. Agostinelli,
     \newblock \emph{Nuova forma sintetica delle equazioni del moto di un sistema anolonomo ed esistenza di un integrale lineare nelle velocit\`a.}
     \newblock Boll. Un. Mat. Ital., \textbf{11} (1956), 1--9.


\bibitem{AM1997}
\newblock P. Ashwin and I. Melbourne,
\newblock {\it Noncompact drift for relative equilibria and relative periodic orbits.}
\newblock Nonlinearity, {\bf 10} (1997), 595--616.

\bibitem{balseiro2014} 
\newblock P. Balseiro, 
\newblock{\it The Jacobiator of Nonholonomic Systems and the Geometry of Reduced Nonholonomic Brackets.}
\newblock Arch. Ration. Mech. Anal. {\bf 214}, ( 2014), 453--501.

\bibitem{balseiro2017} 
\newblock P. Balseiro, 
\newblock {\it Hamiltonization of Solids of Revolution Through Reduction.}
\newblock  J. Nonlinear Sci. {\bf 27} (2017), 2001--2035.

\bibitem{BalFer2015} 
\newblock P. Balseiro and O.E. Fernandez,
\newblock{\it Reduction of Nonholonomic Systems in Two Stages.} 
\newblock Nonlinearity, Volume 28 (2015) 2873-2912.

\bibitem{BN2011} 
\newblock P. Balseiro and L. Garc\'{i}a-Naranjo, 
\newblock{\it Gauge Transformations, Twisted Poisson Brackets and Hamiltonization of Nonholonomic Systems.} 
\newblock Arch. Ration. Mech. Anal. {\bf 205} (2012), 267--310.

\bibitem{BS2016}
\newblock P. Balseiro and N. Sansonetto,
\newblock{\it A Geometric Characterization of Certain First Integrals for Nonholonomic Systems with Symmetries.}
\newblock SIGMA {\bf 12} (2016), 018, 14pages.

\bibitem{BalYapu19} 
\newblock P. Balseiro and L.P. Yapu. 
\newblock{\it Conserved quantities and hamiltonization of nonholonomic systems.} 



\bibitem{BGM96}
\newblock L.M.~Bates, H.~Graumann and C.~MacDonnell,
\newblock {\it Examples of gauge conservation laws in nonholonomic systems.}
\newblock \emph{Rep. Math. Phys.}, \textbf{37} (1996), 295--308.

\bibitem{BS93}
\newblock L.M. Bates and J. \'Sniatycki, 
\newblock {\em Nonholonomic reduction.}
\newblock Rep. Math. Phys. {\bf32} (1993), 99--115.




\bibitem{Bloch}
\newblock A.M. Bloch, 
\newblock Nonholonomic Mechanics and Controls.
\newblock Interdisciplinary Applied Mathematics {\bf 24}, Systems and Control.
(Springer--Verlag, New York, 2003).

\bibitem{BKMM}
     \newblock A.M. Bloch, P.S. Krishnaprasad, J.E. Marsden, R.M. Murray,
     \newblock {\it Nonholonomic mechanical systems with symmetry.}
     \newblock Arch. Rational Mech. Anal. \textbf{136}  (1996), 21--99.


\bibitem{bogoyavlenskij} 
\newblock O. I. Bogoyavlenskij,
\newblock {\it Extended integrability and bi-{H}amiltonian systems.}
\newblock Comm. Math. Phys., \textbf{196} (1998), 19--51.

\bibitem{BKK2015} 
\newblock{A.V. Bolsinov, A.A. Kilin, and A.O. Kazakov,}
\newblock{\it Monodromy as an obstruction to Hamiltonization: pro or contra?} 
\newblock{J. Geom. Phys. {\bf 87} (2015), 61--75.}



\bibitem{BorisovMamaev2001} 
\newblock A.~V. Borisov and  I.~S. Mamaev,  
\newblock {\it Chaplygin's ball rolling problem is Hamiltonian.}
\newblock  Math. Notes, {\bf 70} (2001), 793--795.

\bibitem{BMK2002} 
\newblock A.V. Borisov,  I.S. Mamaev and A.A. Kilin,
\newblock {\it Rolling of a ball on a surface. New integrals and hierarchy of dynamics.} 
\newblock Regul. Chaotic Dyn., {\bf 7}  (2002), 177--200. 



%

\bibitem{CdLdDM}
\newblock F. Cantrijn, M. de Leon, M. de Diego and J. Marrero,
\newblock \emph{Reduction of nonholonomic mechanical systems with symmetries},
\newblock Rep. Math. Phys. \textbf{42} (1998), 25--45.

\bibitem{CCdLdD}
\newblock F. Cantrijn, J. Cort\'es, M. de Leon and M. de Diego,
\newblock {\it On the geometry of generalized Chaplygin systems},
\newblock Math. Proc. Cambridge Phil. Soc. {\bf 132} (2002), 323--351.

\bibitem{chapsphere} 
\newblock S.~A. Chaplygin,  
\newblock {\it On a ball's rolling on a horizontal plane. }
\newblock Regul.  Chaotic Dyn., {\bf 7} (2002), 131--148; original paper in Mathematical Collection of the Moscow
Mathematical Society, {\bf 24} (1903), 139--168.

\bibitem{cortes}
\newblock J. Cort\'es Monforte, 
\newblock Geometric, control and numerical aspects of nonholonomic systems.
\newblock Lecture Notes in Mathematics {\bf 1793} (Springer-Verlag, Berlin, 2002).

\bibitem{cushman1998} 
\newblock R.Cushman, 
\newblock {\it Routh's sphere.} 
\newblock Rep. on Math. Phys. {\bf 42} (1998), 42--70.

\bibitem{CKSB}
\newblock R. Cushman, D. Kemppeinen, J. \'Sniatycki and L. M.~Bates,
\newblock {\it Geometry of nonholonomic constraints.}
\newblock Rep. Math. Phys., \textbf{36} (1995), 275--286.



\bibitem{CDS}
\newblock R. Cushman, J.J. Duistermaat and J. \'Sniatycki,
\newblock Geometry of Nonholonomically Constrained Systems. 
\newblock Advanced Series in Nonlinear Dynamics {\bf 26}, Singapore: World Scientific, 2010.


%



\bibitem{duistermaat2004}
\newblock J.J. Duistermaat, 
\newblock {\it Chaplygin's sphere.} 
\newblock arXiv:math/0409019 (2004).




\bibitem{FG2002}
\newblock F. Fass{\` o} and A. Giacobbe,
\newblock {\it Geometric structure of "broadly integrable" Hamiltonian systems.}
\newblock J. Geom. Phys. \textbf{44} (2002), 156--170.

\bibitem{FG2007}
\newblock F. Fass\`o and A. Giacobbe, 
\newblock{\it Geometry of Invariant Tori of Certain Integrable Systems with Symmetry and an Application
to a Nonholonomic System.}
\newblock SIGMA {\bf 3} (2008), 051.

\bibitem{FGS2005}
\newblock F. Fass{\` o}, A. Giacobbe and N. Sansonetto,
\newblock {\it Periodic flows, rank--two Poisson structures, and Nonholonomic systems.}
\newblock Reg. Ch. Dyn. \textbf{10} (2005), 267--284.

\bibitem{FGS2008}
\newblock F. Fass{\` o}, A. Giacobbe and N. Sansonetto,
\newblock \emph{Gauge conservation laws and the momentum equation in nonholonomic mechanics.}
\newblock Rep. Math. Phys. \textbf{62} No. 3 (2008), 345--367.

\bibitem{FGS2009}
\newblock F. Fass{\` o}, A. Giacobbe, N. Sansonetto,
\newblock \emph{On the number of weakly Noetherian constants of motion of nonholonomic systems.}
\newblock J. Geom. Mech. \textbf{1} (2009) 389--416.
     
\bibitem{FS2010}
\newblock F. Fass{\` o} and N. Sansonetto,
\newblock {\it An Elemental Overview of the Nonholonomic Noether Theorem.}
\newblock Int. J. Geom. Methods Mod. Phys. {\bf 6} (2010), 1343--1355.

\bibitem{FGS2012}
\newblock F. Fass{\` o}, A. Giacobbe, N. Sansonetto,
\newblock {\it Linear weakly Noetherian constants of motion are horizontal gauge momenta.}
\newblock J. Geom. Mech. \textbf{4} (2012) 129--136.

\bibitem{FRS2007}
\newblock F. Fass{\` o}, A. Ramos, N. Sansonetto,
\newblock {\it The reaction-annihilator distribution and the nonholonomic Noether theorem for lifted actions.}
\newblock Reg. Ch. Dyn. \textbf{12} (2007), 579--588.

\bibitem{FPZ2020}
\newblock F. Fass\`o. S. Passarella, and M. Zoppello, 
\newblock{\it Control of locomotion systems and dynamics in relative periodic orbits.}
\newblock To appear in J. Geom. Mech. doi:10.3934/jgm.2020022.

\bibitem{fedorov} 
\newblock Y. N. Fedorov,
\newblock {Systems with an invariant measure on Lie groups},
\newblock In \emph{Hamiltonian systems with three or more degrees of freedom (S'Agar\`o, 1995)}, 350--356, NATO Adv. Sci. Inst. Ser. C Math. Phys. Sci., 533 Kluwer, Dordrecht, 1999.

\bibitem{field1990} 
M.J. Field, 
{\it Equivariant dynamical systems.} 
Trans. Am. Math. Soc. {\bf 259} (1990), 185--205.  



\bibitem{GN2008}  
\newblock L. Garcia-Naranjo
\newblock \emph{Reduction of  almost Poisson brackets and hamiltonization of the Chaplygin sphere.}
\newblock  Disc. and Cont. Dyn. Syst. Series S, {\bf 3}, (2010), 
37--60.

\bibitem{GNMontaldi}  
\newblock L.C. Garc\'ia-Naranjo and J. Montaldi
\newblock \emph{Gauge momenta as Casimir functions of nonholonomic systems.}
\newblock  Arch Rational Mech Anal (2018), 228 (2), pp 563-602

\bibitem{hermans}
\newblock J. Hermans,
\newblock {\it A symmetric sphere rolling on a surface.}
\newblock Nonlinearity, {\bf 8} (1995), 493--515.

\bibitem{IdLMM} 
\newblock A. Ibort, M. de Le\'on, J. C. Marrero, D. Mart\'\i n de Diego
\newblock {\it Dirac brackets in constrained dynamics.}
\newblock Fortschritte der Physik, Vol.47 (1999), 459--492.


\bibitem{iliev1}
\newblock Il. Iliev and Khr. Semerdzhiev,
\newblock \emph{Relations between the first integrals of a nonholonomic mechanical system and of the corresponding system freed of constraints.}
\newblock J. Appl. Math. Mech. \textbf{36}  (1972), 381--388.

\bibitem{iliev2}
\newblock Il. Iliev,
\newblock \emph{On first integrals of a nonholonomic mechanical system.}
\newblock J. Appl. Math. Mech. \textbf{39}  (1975), 147--150.



%


\bibitem{krupa}
M. Krupa,
{\it Bifurcations of relative equilibria.} 
SIAM J. Math. Anal. {\bf 21} (1990), 1453--86.



\bibitem{marle95}
\newblock C.-M. Marle,
\newblock {\it Reduction of constrained mechanical systems and stability of relative equilibria.}
\newblock Comm. Math. Phys. {\bf 174} (1995), 295--318.

\bibitem{Marle98}
\newblock C.-M. Marle,
\newblock {\it Various approaches to conservative and nonconservative nonholonomic systems.}
\newblock Rep. Math. Phys. {\bf 42} (1998), 211--229.

\bibitem{marle2001}
\newblock C.-M. Marle,
\newblock \emph{On symmetries and constants of motion in Hamiltonian systems with nonholonomic constraints},
\newblock  In ``Classical and quantum integrability'' (Warsaw, 2001), 223--242, Banach Center Publ.~{\bf 59} 2001, Polish Acad. Sci. Warsaw (2003),  223--242

\bibitem{MarsdenRatiuBook}
\newblock J.E. Marsden and T.S. Ratiu,
\newblock Introduction to Mechanics and Symmetry, 2nd ed.
\newblock Texts in Appl. Math. New York, Springer, {\bf 17}, 1999.

\bibitem{milnor1976}
\newblock J. Milnor,
\newblock \emph{Curvatures of Left Invariant Metrics on Lie Groups}.
\newblock  Advances in Mathematics \textbf{21} (1976), 293--329.


\bibitem{Naber}
\newblock G.L. Naber,
\newblock Topology, Geometry and Gauge Fields. Applied Mathematical Sciences, {\bf 141}.
\newblock  Springer, Heidelberg, 1991.

\bibitem{Nash}
\newblock Ch. Nash,
\newblock Differential Topology and Quantum Field Theory.
\newblock Academic Press, New York, 1991.

\bibitem{NF}
\newblock Ju.I. Neimark and N.A. Fufaev,
\newblock Dynamics of Nonholonomic Systems. 
\newblock Translations of Mathematical Monographs {\bf 33} (AMS, Providence, 1972).




\bibitem{OLMB}
\newblock J. Ostrowski, A. Lewis, R. Murray and J. Burdick
\newblock {\it Nonholonomic mechanics and locomotion: the snakeboard example.}
\newblock Proceedings of the 1994 IEEE International Conference on Robotics and Automation.


%
%


\bibitem{Routh}
\newblock E.J. Routh, 
\newblock Treatise on the Dynamics of a System of Rigid Bodies (Advanced Part). 
\newblock Dover, New York, 1955.

\bibitem{RS}
\newblock G. Rudolf and M. Schmdt, 
\newblock {Differential Geometry and Mathematical Physics. Part 2.} 
\newblock Theoretical and Mathematical Physics Series, 2017.

\bibitem{sansonetto}
\newblock N. Sansonetto, 
\newblock {First integrals in nonholonomic systems.} 
\newblock Ph.D. thesis, Universit\`a degli Studi di Padova.

\bibitem{SVN}
\newblock D. Sepe and S. Vu Ngoc, 
\newblock {Integrable systems, symmetries, and quantization.} 
\newblock Lett. Math. Phys. {\bf 108} (2018), 499--571.

\bibitem{SeveraWeinstein} 
\newblock P. \v{S}evera, A. Weinstein, 
\newblock {\it Poisson Geometry with a 3-form Background.}
\newblock Progress of Theoretical Physics, Vol. 144 (2001), 145-154.

\bibitem{Sniatycki98}
\newblock J. Sniatycki,
\newblock {\it Nonholonomic Noether theorem and reduction of symmetries.}
\newblock Rep. Math. Phys. \textbf{42} (1998), 5-23.

\bibitem{ShaftMashke}
\newblock A. Van der Shaft and B.M. Mashke, 
\newblock{\it On the Hamiltonian formulation of nonholonomic mechanical systems.}
\newblock Rep. Math. Phys. {\bf 34} (1994), 225-233. 


\bibitem{zenkov1995}
\newblock D.V. Zenkov, 
\newblock{\it The geometry of the Routh problem.}
\newblock J. Nonlinear Sci. {\bf 5} (1995), 503-519. 

\bibitem{zenkov2003}
\newblock D.V. Zenkov,
\newblock \emph{Linear conservation laws of nonholonomic systems with symmetry.}
\newblock In ``Dynamical systems and differential equations'' (Wilmington, NC, 2002),  
Discrete Contin. Dyn. Syst. suppl. (2003), 967--976.


\bibitem{zung2016}
\newblock N.T. Zung,
\newblock {\it Geometry of Integrable non--Hamiltonian Systems, Geometry and Dynamics of Integrable Systems. Advanced Courses in Mathematics, CRM Barcelona.}
\newblock Editors V. Matveev and E. Miranda, Birkh\"auser (2016), 85-135.



\end{thebibliography}
\end{document}